\newcommand{\mytitle}{Overgroups of exterior powers of an elementary group.\\\MakeUppercase{\romannumeral 1.} Levels and normalizers}
\title{\mytitle}
\author{Roman~Lubkov}
\thanks{The constructions from the proof of Theorems 1 and 2 (Subsections \ref{square}--\ref{normgeneral}) are done by the first author and supported by the Russian Science Foundation, grant №17-11-01261.}
\address[Roman~Lubkov]{Department of Mathematics and Mechanics, St.~Petersburg State University}
\email{RomanLubkov@yandex.ru}
\author{Ilia~Nekrasov}
\thanks{The constructions from the proof of Theorem 3 (Subsections \ref{stabgood}--\ref{stabbad}) are done by the second author and supported by the Russian Science Foundation, grant №16-11-10200. Also the second author is thankful to ``Native towns'', a social investment program of PJSC ``Gazprom Neft''.}
\address[Ilia~Nekrasov]{Chebyshev Laboratory, St.~Petersburg State University}
\email{geometr.nekrasov@yandex.ru}
\keywords{General linear group, elementary group, overgroup, fundamental representation}
\subjclass{20G35}
\date{}
\DeclareMathOperator{\E}{E}
\DeclareMathOperator{\M}{M}
\DeclareMathOperator{\CC}{C}
\DeclareMathOperator{\Sp}{Sp}
\DeclareMathOperator{\GSp}{GSp}
\DeclareMathOperator{\EO}{EO}
\DeclareMathOperator{\SO}{SO}
\DeclareMathOperator{\GO}{GO}
\DeclareMathOperator{\SL}{SL}
\DeclareMathOperator{\GL}{GL}
\DeclareMathOperator{\sign}{sgn}
\DeclareMathOperator{\Tran}{Tran}
\DeclareMathOperator{\Gr}{Gr}
\DeclareMathOperator{\Sym}{Sym}
\DeclareMathOperator{\Stabi}{Stab}
\DeclareMathOperator{\height}{ht}
\DeclareMathOperator{\Ar}{Ar}
\renewcommand{\trianglelefteq}{\trianglelefteqslant}
\renewcommand{\leq}{\leqslant}
\renewcommand{\geq}{\geqslant}
\newcommand{\bigwedgem}[1]{\mathord{\raisebox{2pt}
{\hbox{$\scriptstyle{\bigwedge^{\!#1}}$}}}}
\newcommand\blank{\mathord{\hbox to 1.5ex{\hrulefill}}\,}
\theoremstyle{plain}
\newtheorem{theorem}{Theorem}
\newtheorem*{generalproblem}{General Problem}
\newtheorem{thm}{Theorem}
\newtheorem{proposition}{Proposition}
\newtheorem{prop}{Proposition}
\newtheorem{lemma}{Lemma}
\newtheorem{corollary}{Corollary}
\theoremstyle{remark}
\newtheorem*{remark}{Remark}
\newcounter{MYc}
\begin{document}

\begin{abstract}
In the present paper, we prove the first part in the \textit{standard description} of groups $H$ lying between $\bigwedgem{m}\E(n,R)$ and $\GL_{\binom{n}{m}}(R)$. We study the structure of the group $\bigwedgem{m}\E(n,R)$ and its relative analog $\E\left(\binom{n}{m},R,A\right)$.  In the considering case $n \geq 3m$, the description is explained by the classical notion of \textit{level}: for every such $H$ we find unique ideal $A$ of the ring $R$. 

Motivated by the problem, we prove the coincidence of the following groups: normalizer of $\bigwedgem{m}\E(n,R)$, normalizer of $\bigwedgem{m}\SL_{n}(R)$, transporter of $\bigwedgem{m}\E(n,R)$ into $\bigwedgem{m}\SL_{n}(R)$, and a group $\bigwedgem{m}\GL_{n}(R)$. This result mainly follows from the found explicit equations for the algebraic group scheme $\bigwedgem{m}\GL_n(\blank)$.
\end{abstract}
\maketitle

\section*{Introduction}\label{fundrepres}

The present paper is devoted to the solution of the following general problem.

\begin{generalproblem}
Let $R$ be an arbitrary commutative associative ring with $1$ and invertible $2$, and let $\Phi$ be a reduced irreducible root system. For the Chevalley--Demazure scheme $G(\Phi, \blank) \leq \GL_{M}(\blank)$ and its arbitrary representation $\rho:\;G(\Phi, \blank) \longrightarrow \GL_{N}(\blank)$ describe all overgroups $H$ of the corresponding elementary group $\E_{G}(\Phi, \blank)$ in the representation $\rho$:
$$\E_{G, \rho}(\Phi, R ) \leq H \leq \GL_{N}(R).$$
\end{generalproblem}

The conjectural answer, \textit{standard overgroup description}, in a general case is formulated as follows. 
For any overgroup $H$ of the elementary group there exists a net of ideals $\mathbb{A}$ of the ring $R$ such that
$$\E_{G,\rho}(\Phi,R)\cdot\E(N,R,\mathbb{A})\leq H \leq N_{\GL_{N}(R)}\big(\E_{G,\rho}(\Phi, R)\cdot\E(N,R,\mathbb{A})\big),\eqno{(G_\mathbb{A})}$$
where $\E(N,R,\mathbb{A})$ is a \textit{relative elementary subgroup} for a net $\mathbb{A}$.

In the special case of a trivial net, i.\,e., $\mathbb{A} = \{A\}$, overgroups $H$ of the group $\E_{G}(\Phi,R)$ can be parametrized by the ideals $A$ of the ring $R$:
$$\E_{G,\rho}(\Phi,R)\cdot\E(N,R,A)\leq H \leq N_{\GL_{N}(R)}\big(\E_{G,\rho}(\Phi, R)\cdot\E(N,R,A)\big),\eqno{(G_A)}$$
where $\E(N,R,A)$ is a \textit{relative elementary subgroup} equals $\E(N, A)^{\E(N,R)}$. 

Moreover, the group $\mathrm{EE}_{G,\rho}(\Phi, R)=\E_{G,\rho}(\Phi,R)\cdot\E(N,R,A)$ is conjecturally perfect. And at the same time, the normalizer of the right-hand side is congruence subgroup for the corresponding level $A$.

The essential step in the proof of the right-hand side inclusion in $(G_A)$ is an alternative description of the normalizer $N_{\GL_{N}( \blank)}\big(\E_{G,\rho}(\Phi, \blank)\big)$. This group conjecturally coincides with a transporter $\Tran_{\GL_{N}(\blank)}(\E_{G,\rho}(\Phi, \blank), G_{\rho}(\Phi, \blank))$, a normalizer $N_{\GL_{N}}(G_{\rho}(\Phi, \blank))$ and the extended Chevalley--Demazure scheme $\overline{G}_{\rho}(\Phi, \blank)$.

The problem was solved in the following cases:
\begin{itemize}
\item for symplectic, orthogonal, and unitary groups by Nikolai Vavilov and Victor Petrov in \cite{VP-EOeven,VP-EOodd,VP-Ep,PetOvergr} and by You Hong, Xing Tao Wang and Cheng Shao Hong in  \cite{HongYou,YouHongOvergrClass,YouHongOvergrBanach,XingCheng};

\item for exceptional groups of Lie type $E_6,\;E_7,\;E_8,\; F_4$ by Nikolai Vavilov and Alexander Luzgarev in \cite{LuzE6E7-GL,LuzF4-E6,VavLuzgE6,VavLuzgE7};

\item for tensor product of general linear groups $\GL_m\otimes\GL_n$ the problem was partially solved by Alexey Ananievskii, Nikolai Vavilov, and Sergey Sinchuk in the joint paper \cite{AnaVavSinI}.
\end{itemize}   

We highly recommend the surveys \cite{VavSubgroup90,VavSbgs,VavStepSurvey}, which contain necessary preliminaries, complete history, and known results of the problem.

In the present paper, we consider the case of the $m$-th fundamental representation of a simply connected group of type $A_{n-1}$, i.\,e., the scheme $G_{\rho}(\Phi,\blank)$ equals the [Zariski] closure of the affine group scheme $\SL_{n}(\blank)$ in the representation with the highest weight $\varpi_{m}$. In our case, corresponding extended Chevalley group scheme coincides with the $m$-th fundamental representation of the general linear group scheme $\GL_{n}(\blank)$.

We deal only with a case of a trivial net, i.\,e., a unique parameterizing ideal $A$. As shown below (Propositions \ref{LevelsEqualGeneral} and \ref{InversInclus}), this restricts our consideration to the case $n\geq 3m$. In this case, the general answer has the following form. Let $H$ be a subgroup in $\GL_{N}(R)$ containing $\bigwedgem{m}\E(n,R)$. Then there exists a unique ideal $A\trianglelefteq R$ such that
$$\bigwedgem{m}\E(n,R)\cdot\E(N,R,A)\leq H \leq N_{\GL_{N}(R)}\big(\bigwedgem{m}\E(n,R)\cdot\E(N,R,A)\big).\eqno{(*)}$$

\medskip

The present paper is organized as follows. In next Section~\ref{main_results} we formulate all main results of the paper with only necessary major comments. In Section~\ref{princnot} we set notation that is used in the rest of the paper. Last lengthy Section~\ref{proofs} contains all complete proofs. In this section we firstly present the level computation for the particular case of the exterior square (Subsection~\ref{square}), and then develop the technique for a general exterior power (Subsections~\ref{general}--\ref{levelgeneral}). Subsections~\ref{stabgood}--\ref{stabbad} contain the presentation of the scheme $\bigwedgem{m}\GL_{n}(\blank)$ as a stabilizer. Also, the paper contains an Appendix devoted to the translation of the former technique for computing explicit equations defining group schemes into Representation Theory language.    

\textbf{Acknowledgment.} We would like to express our sincere gratitude to our scientific adviser Nikolai Vavilov for formulating the problem and constant support, without which this paper would never have been written. Also, authors are grateful to Alexei Stepanov for careful reading of our original manuscript and for numerous remarks and corrections.

\section{Main results}\label{main_results}  

Fundamental representations for the general linear group $\GL_{n}$, as well as for the special linear group $\SL_{n}$, are ones with the highest weights $\varpi_{m}$ for $m = 1, \dots, n$. The representation with the highest weight $\varpi_{n}$ degenerates in the case of $\SL_{n}$. The explicit description of these representations uses exterior powers of vector spaces.

In more details, for commutative ring $R$ by $\bigwedgem{m} R^{n}$ we denote $m$-th exterior power of the free module $R^n$. We can consider a natural transformation \textit{the exterior power} 
$$\bigwedgem{m}: \GL_{n} \rightarrow \GL_{\binom{n}{m}}$$
which action extends the action of the group of points $\GL_{n}(R)$ on the module $\bigwedgem{m} R^{n}$. 

The elementary group $\E(n, R)$ is a subgroup of the group of points $\GL_{n}(R)$, so its exterior power $\bigwedgem{m}\E(n,R)$ is well defined subgroup of the group of points $\bigwedgem{m}\GL_{n} (R)$. More user--friendly description of elements of the elementary group $\bigwedgem{m}\E(n,R)$ will be presented in the Subsections \ref{square} and \ref{general}.

Let $H$ be an arbitrary overgroup of the elementary group $\bigwedgem{m}\E(n,R)$:
$$\bigwedgem{m}\E(n,R) \leq H \leq \GL_{\binom{n}{m}}(R).$$ 
For any unequal weights $I, J \in \bigwedgem{m}[n]$, which are indices for matrix entries of elements from $\GL_{\binom{n}{m}}(R)$, by $A_{I,J}$ we denote the following set 
$$A_{I,J}:=\{\xi \in R \;|\; t_{I,J}(\xi) \in H \} \subseteq R.$$

It turns out that this sets are ideals indeed and coincide for all pairs of weights $I\neq J$. 

\begin{proposition}\label{InversInclus}
All sets $A_{I,J}$ coincide for $n \geq 3m$. 
\end{proposition}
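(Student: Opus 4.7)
The plan breaks into a symmetry reduction and a chain of inclusions, with the reverse inclusion being the main difficulty.

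First, I use Weyl group elements to reduce the problem to comparing ideals indexed by intersection size. For any transposition $(i\;j) \in S_n$, the Weyl element $w_{ij} = e_{ij}(1)\,e_{ji}(-1)\,e_{ij}(1) \in \E(n, R)$ lifts via $\bigwedgem{m}$ to an element of $H$, and conjugation by $\bigwedgem{m} w_{ij}$ sends $t_{I, J}(\xi)$ to $\pm t_{(ij)I,\,(ij)J}(\xi)$. Iterating over an arbitrary $\sigma \in S_n$ gives $A_{I, J} = A_{\sigma(I), \sigma(J)}$, and since the $S_n$-orbits on distinct pairs $(I, J)$ are parameterized by $k := |I \cap J| \in \{0, 1, \dots, m-1\}$, I write $A_k$ for this common value. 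The proposition reduces to showing $A_0 = A_1 = \dots = A_{m-1}$.

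Next, I compute the key commutator. A direct expansion gives
\[
\bigwedgem{m} e_{ij}(\eta) \;=\; \prod_{I_0 :\; j \in I_0,\; i \notin I_0} t_{I_0 \setminus \{j\} \cup \{i\},\; I_0}(\pm \eta),
\]
a product of pairwise commuting transvections. For $t_{I, J}(\xi) \in H$, the Chevalley commutator formula identifies at most two nontrivial contributions in $[\bigwedgem{m} e_{ij}(\eta),\, t_{I, J}(\xi)]$: contribution (a) from the factor with $I_0 = I$, which triggers when $j \in I$ and $i \notin I$ and gives $t_{I \setminus \{j\} \cup \{i\},\, J}(\pm \eta \xi)$; and contribution (b) from the factor with $I_0 \setminus \{j\} \cup \{i\} = J$, which triggers when $i \in J$ and $j \notin J$ and gives $t_{I,\, J \setminus \{i\} \cup \{j\}}(\mp \eta \xi)$.

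For the descending inclusion $A_k \subseteq A_{k-1}$ with $k \geq 1$, I pick $j \in I \cap J$ (which exists since $k \geq 1$) and $i \in [n] \setminus (I \cup J)$ (which exists since $n \geq 3m > 2m - k$). Only (a) fires, so the commutator is the single transvection $t_{I \setminus \{j\} \cup \{i\},\, J}(\pm \eta \xi)$ of intersection $k - 1$. Taking $\eta = 1$ yields $\xi \in A_{k-1}$, and iteration gives $A_{m-1} \subseteq A_{m-2} \subseteq \dots \subseteq A_0$.

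The reverse direction $A_{k-1} \subseteq A_k$ is the main obstacle. Any $(i, j)$ that would raise the intersection size from $k - 1$ to $k$ forces $i \in J \setminus I$ and $j \in I \setminus J$, triggering both (a) and (b) at once: the commutator splits as a product $f_1 \cdot f_2 \in H$ of two commuting transvections of intersection $k$, which a priori need not individually lie in $H$. My plan is to isolate one factor by an iterated conjugation: apply a secondary $\bigwedgem{m} e_{pq}(\zeta)$ with $(p, q)$ supported outside $I \cup J \cup \{i, j\}$ so that its interaction with $f_1 \cdot f_2$ is asymmetric, and then combine the output with the original product to extract a single intersection-$k$ transvection with argument $\zeta \eta \xi$ lying in $H$. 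The hypothesis $n \geq 3m$ is precisely what makes such $(p, q)$ available: the constraint $n \geq 2m - k + 2$ that controls the free-index budget outside the combined support is comfortably subsumed by $n \geq 3m$. Specializing $\zeta = \eta = 1$ and using the ideal structure of $A_k$ then produces $\xi \in A_k$, completing the chain of inclusions.
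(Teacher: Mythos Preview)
Your reduction via Weyl elements and the descending chain $A_{m-1}\subseteq\dots\subseteq A_0$ are fine and essentially match the paper (which does the same reduction by direct type-2/type-3 commutators rather than Weyl conjugation, but to the same effect).

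The reverse inclusion $A_{k-1}\subseteq A_k$, however, has a real gap. You propose to take the two-term product $f_1\cdot f_2\in H$ and then commute with a secondary $\bigwedgem{m}e_{pq}(\zeta)$ where $(p,q)$ is ``supported outside $I\cup J\cup\{i,j\}$''. But $i\in J$ and $j\in I$, so the index sets of both $f_1=t_{I\setminus\{j\}\cup\{i\},\,J}$ and $f_2=t_{I,\,J\setminus\{i\}\cup\{j\}}$ are entirely contained in $I\cup J$. If both $p$ and $q$ lie outside $I\cup J$, your own case analysis (your type (a)/(b)) shows that neither contribution fires for either $f_1$ or $f_2$, and the commutator is identically $1$. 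There is no asymmetry to exploit. If instead you let one of $p,q$ lie inside $I\cup J$, the natural choices (e.g.\ $q=i$, $p\notin I\cup J$) do interact asymmetrically, but they push the height back down to $k-1$ rather than keeping it at $k$. Your index budget estimate $n\geq 2m-k+2$ is also a symptom: it is strictly weaker than the bound $n\geq 3m-2(k-1)$ that the argument actually requires at the step $A_{k-1}\subseteq A_k$, whose worst case $k=1$ is exactly $n\geq 3m$.

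The paper's mechanism for this step is genuinely different from what you sketch. Rather than trying to separate $f_1$ from $f_2$ by a further commutation, it uses a \emph{second} starting transvection $t_{I_1,J_1}(\xi)\in H$ of the same height $k-1$ (this is legitimate since $A_{k-1}$ is already known to depend only on the height), chosen so that $I_1\cup J_1$ is almost disjoint from $I\cup J$. A third-type commutation applied to $t_{I_1,J_1}(\xi)$ produces a second two-term product $g_1\cdot g_2\in H$, and the indices are arranged so that in $[f_1f_2,\,g_1g_2]$ exactly one of the four cross-commutators is nontrivial, yielding a single transvection of height $k$ with argument $\pm\xi^2\zeta\zeta_1$. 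Carrying this out for general $k$ needs roughly $3(m-k)$ free indices, which is where $n\geq 3m$ enters. You should replace your secondary-commutation plan with this two-products trick.
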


In the unconsidered case $\frac{n}{3} \le m \le n$ the description of overgroups cannot be done by parametrization only by the single ideal. Moreover, as could be seen from further calculations we need from 1 to $m$ ideals for the complete parametrization of overgroups. In the general case, this [partially ordered] set of ideals forms the \textit{a net of ideals} (due to Zenon Borevich). Note that in the case of not-unique ideal parametrization, even the notion of relative elementary group is far more complex and depends on the Chevalley group (for instance, see~\cite{AnaVavSinI}), let alone formulations of the Main Theorems.

The set $A:= A_{I,J}$ is called \textit{a level of an overgroup} $H$. Description of the overgroups goes in the following way.

\begin{theorem}[Level Computation]\label{LevelForm}
Let $R$ be a commutative ring and $n$, $m$ be a natural numbers with a constraint $n\geq 3m$. For an arbitrary overgroup $H$ of the group $\bigwedgem{m}\E(n,R)$ there exists unique ideal $A$ of the ring $R$ such that 
$$\bigwedgem{m}\E(n,R)\cdot\E(N,R,A)\leq H.$$
Namely, if a transvection $t_{I,J}(\xi)$ belongs to the group $H$, then $\xi\in A$.
\end{theorem}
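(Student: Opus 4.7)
By Proposition~\ref{InversInclus}, the sets $A_{I,J}$ for $I \neq J \in \bigwedgem{m}[n]$ all coincide with one subset $A := \{\xi \in R : t_{I,J}(\xi) \in H\} \subseteq R$; this already delivers the \emph{namely} characterization and the uniqueness of $A$. The theorem then reduces to two tasks: (a) showing that $A$ is an ideal of $R$, and (b) establishing the inclusion $\E(N,R,A) \leq H$.

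For (a), additive closure is immediate from $t_{I,J}(\xi)\,t_{I,J}(\eta) = t_{I,J}(\xi+\eta)$. For closure under multiplication by an arbitrary $\eta \in R$, the idea is to realize $\eta\xi$ as the coefficient of some transvection in $H$, extracted from a commutator computation. The key observation is that the wedge transvection $\bigwedgem{m} t_{k,l}(\eta) \in \bigwedgem{m}\E(n,R) \leq H$, viewed as an operator on $\bigwedgem{m} R^n$, decomposes as a product of pairwise commuting elementary transvections $t_{(I_0\setminus\{l\})\cup\{k\},\,I_0}(\pm\eta)$ indexed by weights $I_0 \in \bigwedgem{m}[n]$ containing $l$ but not $k$. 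Given $\xi \in A$ and $\eta \in R$, I plan to choose indices $k \neq l$ and weights $I \neq J$ satisfying $l \in I$ and $k \notin I \cup J$; then exactly one factor of $\bigwedgem{m} t_{k,l}(\eta)$ fails to commute with $t_{I,J}(\xi)$, and the full commutator collapses via the Chevalley formula to the single transvection $t_{I',J}(\pm\eta\xi)$ with $I' = (I\setminus\{l\})\cup\{k\}$. Since this commutator lies in $H$ as a commutator of two elements of $H$, we obtain $\eta\xi \in A_{I',J} = A$. Such a configuration $(k,l,I,J)$ exists as soon as $n \geq m+2$, which is amply furnished by the standing hypothesis $n \geq 3m$.

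For (b), once $A$ is an ideal the defining characterization of $A$ yields $\E(N,A) \leq H$. To upgrade this to $\E(N,R,A) = \E(N,A)^{\E(N,R)}$, use the identity $g t g^{-1} = [g,t]\cdot t$ and induct on the length of $g \in \E(N,R)$ as a word in the elementary generators $t_{K,L}(r)$; the task reduces to showing that each Chevalley commutator $[t_{K,L}(r),\,t_{I,J}(a)]$ with $a \in A$, $r \in R$ lies in $H$. By the standard commutator formula each such commutator is at worst a single transvection with coefficient $\pm ra$, which belongs to $A$ by the ideal property and hence to $\E(N,A) \leq H$; the residual \emph{opposite} case $(K,L) = (J,I)$ is handled by routing through a third index, available in dimension $N = \binom{n}{m} \geq 3$. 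The main technical obstacle throughout is the combinatorial choice in step (a): engineering $(k,l,I,J)$ so that the commutator $[\bigwedgem{m} t_{k,l}(\eta),\,t_{I,J}(\xi)]$ collapses to a \emph{single} elementary transvection, which is precisely where the explicit decomposition of $\bigwedgem{m} t_{k,l}(\eta)$ into commuting short-root transvections is essential.
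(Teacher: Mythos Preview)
Your step (a) is correct and is essentially how the paper shows that the common set $A$ is an ideal (the paper absorbs this into the proof of Proposition~\ref{LevelsEqualGeneral}, using exactly the second--type commutation from Proposition~\ref{TypesOfComm} that you describe).

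Step (b), however, has a genuine gap in the ``opposite'' case, and this is precisely the crux of the theorem. Your induction on the word length of $g\in\E(N,R)$ does not reduce as claimed: writing $g=t_{K,L}(r)\,g'$ gives $gtg^{-1}=t_{K,L}(r)\bigl(g'tg'^{-1}\bigr)t_{K,L}(-r)$, and the inductive hypothesis only tells you $g'tg'^{-1}\in H$, not that it lies in $\E(N,A)$; since $t_{K,L}(r)\notin H$ in general, you cannot conclude. By Vaserstein--Suslin one may bypass this induction entirely and reduce to the single generators $z_{I,J}(a,\zeta)={}^{t_{J,I}(\zeta)}t_{I,J}(a)$---but that \emph{is} the opposite case, and your ``routing through a third index'' does not dispose of it. Writing $t_{I,J}(a)=[t_{I,V}(a),t_{V,J}(1)]$ and conjugating by $t_{J,I}(\zeta)$ produces, after the standard expansion of $[ab,cd]$, the new opposite commutators $[t_{I,V}(a),t_{V,I}(-\zeta)]$ and $[t_{J,V}(\zeta a),t_{V,J}(1)]$; you have traded one bad case for two.

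What makes the paper's argument (Lemma~\ref{AlterRelatForm}) go through is the one ingredient your step (b) never invokes: the hypothesis $\bigwedgem{m}\E(n,R)\leq H$. The paper chooses the auxiliary weight $V$ so that $|I\cap V|=m-1$ and then runs an induction on the height $|I\cap J|$. In the base case $|I\cap J|=m-1$, the single factor $t_{J,I}(\zeta)$ occurring in the product decomposition of $\bigwedgem{m}t_{j,i}(\zeta)$ is the only one that fails to commute with $t_{I,J}(a)$, so $z_{I,J}(a,\zeta)={}^{\bigwedgem{m}t_{j,i}(\pm\zeta)}t_{I,J}(a)\in H$ directly. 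The inductive step then feeds the residual opposite commutators back into strictly higher height. Equivalently, the paper proves the structural identity $\E(N,A)^{\bigwedgem{m}\E(n,R)}=\E(N,R,A)$, whence $\E(N,R,A)\leq H$ follows immediately from $\E(N,A)\leq H$ and $\bigwedgem{m}\E(n,R)\leq H$. Your proposal for (b) needs this use of the wedge transvections; without it the opposite case cannot be closed.
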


The left--hand side subgroup is denoted by $\E\bigwedgem{m}\E(n,R,A)$. We emphasize that this group is perfect (Lemma \ref{PerfectForm}).

Motivated by the expected relations $(*)$, we find an alternative description of the normalizer $N_{\GL_{N}(R)}(\E\bigwedgem{m}\E(n,R,A))$. For this we introduce the canonical projection $\rho_A: R\longrightarrow R/A$ mapping $\lambda\in R$ to $\bar{\lambda}=\lambda+A\in R/I$. Applying the projection to all entries of a matrix, we get the reduction homomorphism 
$$\begin{array}{rcl}
\rho_{A}:\GL_{n}(R)&\longrightarrow& \GL_{n}(R/A)\\
a &\mapsto& \overline{a}=(\overline{a}_{i,j})
\end{array}$$

Eventually, we have the following explicit \textit{congruence} description.

\begin{theorem}\label{LevelReduction} Let $n\geq 3m$.
For any ideal $A\trianglelefteq R$ we have
$$N_{\GL_{N}(R)}(\E\bigwedgem{m}\E(n,R,A))=\rho_{A}^{-1}\left(\bigwedgem{m}\GL_{n}(R/A)\right).$$
\end{theorem}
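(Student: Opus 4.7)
The plan is to derive Theorem~\ref{LevelReduction} from Theorem~2 (the scheme-theoretic identification of the normalizer of $\bigwedgem{m}\E(n,\blank)$ with $\bigwedgem{m}\GL_n(\blank)$), applied over the residue ring $R/A$. The key preliminary computation is
$$\rho_A\bigl(\E\bigwedgem{m}\E(n,R,A)\bigr)=\bigwedgem{m}\E(n,R/A),$$
since the factor $\E(N,R,A)$ lies in the kernel of $\rho_A^N$, while every generator $\bigwedgem{m}e_{i,j}(\bar r)$ of $\bigwedgem{m}\E(n,R/A)$ is the image of $\bigwedgem{m}e_{i,j}(r)$ for any lift $r\in R$ of $\bar r$. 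With this identification in hand the two inclusions are handled separately.

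For the inclusion $\subseteq$, suppose $g$ normalizes $\E\bigwedgem{m}\E(n,R,A)$. Given any $\bar y \in \bigwedgem{m}\E(n,R/A)$, I would lift it to some $y\in\bigwedgem{m}\E(n,R)\leq \E\bigwedgem{m}\E(n,R,A)$; then $gyg^{-1}$ still belongs to $\E\bigwedgem{m}\E(n,R,A)$, and its reduction $\bar g \bar y \bar g^{-1}$ lies in $\bigwedgem{m}\E(n,R/A)$. Thus $\bar g$ normalizes $\bigwedgem{m}\E(n,R/A)$ inside $\GL_N(R/A)$, so Theorem~2 applied to $R/A$ forces $\bar g\in\bigwedgem{m}\GL_n(R/A)$, that is, $g\in\rho_A^{-1}(\bigwedgem{m}\GL_n(R/A))$.

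For the inclusion $\supseteq$, take $g$ with $\bar g\in\bigwedgem{m}\GL_n(R/A)$ and verify that $g$ normalizes each generating subgroup of $\E\bigwedgem{m}\E(n,R,A)=\bigwedgem{m}\E(n,R)\cdot\E(N,R,A)$. Since $N=\binom{n}{m}\geq n\geq 3$, Suslin's normality theorem yields $\E(N,R,A)\trianglelefteq\GL_N(R)$, so $g$ normalizes the second factor automatically. For a generator $x=\bigwedgem{m}e$ with $e\in\E(n,R)$, Theorem~2 over $R/A$ gives $\bar g \bigwedgem{m}\bar e \bar g^{-1} \in \bigwedgem{m}\E(n,R/A)$; lifting this reduction to some $\bigwedgem{m}e'\in\bigwedgem{m}\E(n,R)$ produces $gxg^{-1}=\bigwedgem{m}e'\cdot c$ with an error term $c\in\GL_N(R,A)$. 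To absorb $c$ into $\E(N,R,A)$ I would invoke the inclusion $\bigwedgem{m}\E(n,R)\leq \E(N,R)$ (immediate from the explicit generator description in Subsections~\ref{square}--\ref{general}) together with the Suslin--Tulenbaev commutator formula $[\E(N,R),\GL_N(R,A)]=\E(N,R,A)$, rewriting $c$ via commutators of $g$ with lifted elementary generators.

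The reduction-modulo-$A$ direction is essentially automatic once Theorem~2 is in place; the real difficulty sits in the reverse direction, where one must upgrade the statement that $\bar g$ normalizes $\bigwedgem{m}\E(n,R/A)$ to the finer statement that $g$ normalizes the precise product $\bigwedgem{m}\E(n,R)\cdot\E(N,R,A)$, and not merely the larger preimage $\rho_A^{-1}(\bigwedgem{m}\E(n,R/A))$. The main obstacle I expect is showing that the error term $c$, which a priori lies only in the principal congruence subgroup $\GL_N(R,A)$, actually sits in the usually smaller elementary part $\E(N,R,A)$; this is exactly where the relative commutator calculus for $\E(N,R,A)$ must intervene, and it is the step that cannot be made purely formal.
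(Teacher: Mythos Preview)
Your inclusion $\subseteq$ is correct and coincides with the paper's argument: normalizer functoriality together with Theorem~\ref{normalizer} applied over $R/A$.

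The reverse inclusion, however, has a genuine gap at exactly the point you flag, and the tool you name cannot close it. The formula $[\E(N,R),\GL_N(R,A)]=\E(N,R,A)$ controls only commutators whose first argument lies in $\E(N,R)$; in your error term $c=(\bigwedgem{m}e')^{-1}g\,x\,g^{-1}$ the element $g$ lies in neither $\E(N,R)$ nor $\GL_N(R,A)$, and one cannot in general factor $g$ as an element of $\bigwedgem{m}\GL_n(R)$ times something in $\GL_N(R,A)$, since the map $\bigwedgem{m}\GL_n(R)\to\bigwedgem{m}\GL_n(R/A)$ need not be surjective. So there is no way to rewrite $c$ as a product of commutators to which the Vaserstein formula applies, and the congruence defect $\GL_N(R,A)/\E(N,R,A)$ genuinely obstructs the direct lifting argument.

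The paper bypasses this by a different mechanism. Your computation already gives, for $z\in\CC\bigwedgem{m}\GL_n(R,A):=\rho_A^{-1}(\bigwedgem{m}\GL_n(R/A))$, the weak inclusion $[z,\E\bigwedgem{m}\E(n,R,A)]\leq \E\bigwedgem{m}\E(n,R,A)\cdot\GL_N(R,A)$. Separately one verifies $\E\bigwedgem{m}\E(n,R,A)\trianglelefteq \bigwedgem{m}\GL_n(R)\cdot\GL_N(R,A)$ using Suslin normality and the standard commutator formula; it is \emph{here}, not in absorbing $c$, that the relative commutator calculus enters. These two facts combine to the triple-commutator inclusion
\[
\bigl[\CC\bigwedgem{m}\GL_n(R,A),\,\E\bigwedgem{m}\E(n,R,A),\,\E\bigwedgem{m}\E(n,R,A)\bigr]\leq\E\bigwedgem{m}\E(n,R,A).
\]
Since $\E\bigwedgem{m}\E(n,R,A)$ is perfect (Lemma~\ref{PerfectForm}), it suffices to show $[z,[x,y]]\in\E\bigwedgem{m}\E(n,R,A)$ for all $x,y\in\E\bigwedgem{m}\E(n,R,A)$, and the Hall--Witt identity reduces this to instances of the triple-commutator inclusion just established. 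This perfectness-plus-Hall--Witt manoeuvre is the missing idea in your sketch.
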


For formulating the last Theorem, we recall that for two arbitrary subgroups $E$ and $F$ of a group $G$ a \textit{transporter} of the group $E$ into the group $F$ is the following set
$$\Tran_G(E,F)=\{g\in G\;|\;E^g\leq F\},$$
In a special case of subgroups $E\leq F$ the last definition can be rephrased by using commutators:
$$\Tran_G(E,F)=\{g\in G\;|\;[g,E]\leq F\}.$$

The final Theorem describes a normalizer of the elementary group $\bigwedgem{m}\E(n,R)$ simultaneously as a transporter and as the group $\bigwedgem{m}\GL_{n}(R)$.

\begin{theorem}\label{normalizer}
Let $R$ be a commutative ring, and let $n\geq 4$. Then
$$N_{\GL_{N}(R)}(\bigwedgem{m}\E(n,R)) = N_{\GL_{N}(R)}(\bigwedgem{m}\SL_{n}(R)) = \Tran_{\GL_{N}(R)}(\bigwedgem{m}\E(n,R), \bigwedgem{m}\SL_{n}(R)) = \bigwedgem{m}\GL_{n}(R).$$
\end{theorem}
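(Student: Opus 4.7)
The plan is to prove the four-way coincidence through a short cycle of inclusions
$$\bigwedgem{m}\GL_n(R) \subseteq N_{\GL_N(R)}(\bigwedgem{m}\SL_n(R)) \subseteq \Tran_{\GL_N(R)}(\bigwedgem{m}\E(n,R), \bigwedgem{m}\SL_n(R)) \subseteq \bigwedgem{m}\GL_n(R),$$
complemented by the parallel cycle through $N_{\GL_N(R)}(\bigwedgem{m}\E(n,R))$. The first inclusion amounts to the facts that $\GL_n(R)$ normalizes $\SL_n(R)$ trivially and normalizes $\E(n,R)$ by Suslin's normality theorem (valid for $n \ge 3$), after which the scheme homomorphism $\bigwedgem{m}$ transports these normality statements to the exterior powers, extending from the image $\bigwedgem{m}(\GL_n(R))$ to all $R$-points $\bigwedgem{m}\GL_n(R)$ by the group-scheme structure established in Subsections~\ref{stabgood}--\ref{stabbad}. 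The middle inclusion is formal: if $g$ normalizes $\bigwedgem{m}\SL_n(R)$, then $\bigwedgem{m}\E(n,R)^g \subseteq \bigwedgem{m}\SL_n(R)^g = \bigwedgem{m}\SL_n(R)$; symmetrically, if $g$ normalizes $\bigwedgem{m}\E(n,R)$, then $\bigwedgem{m}\E(n,R)^g = \bigwedgem{m}\E(n,R) \subseteq \bigwedgem{m}\SL_n(R)$, placing this normalizer in the transporter as well.

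The substance therefore concentrates entirely in the closing inclusion $\Tran \subseteq \bigwedgem{m}\GL_n(R)$. To establish it, I would invoke the presentation of $\bigwedgem{m}\GL_n(\blank)$ as a stabilizer, proved in Subsections~\ref{stabgood}--\ref{stabbad}: the scheme is realized as the simultaneous $\GL_N$-stabilizer of an explicit family of Plücker-type tensors $w_1,\dots,w_s$, equivalently as the vanishing locus of an explicit family of quadratic polynomials in the matrix entries. For $g \in \Tran$ and any elementary generator $x = \bigwedgem{m}(t_{i,j}(\xi))$, the conjugate $gxg^{-1}$ lies in $\bigwedgem{m}\SL_n(R) \subseteq \bigwedgem{m}\GL_n(R)$ and hence fixes every $w_k$. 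Rewriting the identity $(gxg^{-1})\cdot w_k = w_k$ as $x\cdot(g^{-1}w_k) = g^{-1}w_k$ shows that each vector $g^{-1}w_k$ is fixed by the entire elementary group $\bigwedgem{m}\E(n,R)$.

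The main obstacle I foresee is the rigidity statement that the $\bigwedgem{m}\E(n,R)$-invariants and the $\bigwedgem{m}\GL_n(R)$-invariants coincide in each of these Plücker modules, so that each $g^{-1}w_k$ is forced into the line $R\cdot w_k$, say $g^{-1}w_k = \lambda_k w_k$; pinning the scalars $\lambda_k$ to $1$ (for example by testing against a further invariant or by a $\det$-normalization) then yields $g\in\bigwedgem{m}\GL_n(R)$. The hypothesis $n\ge 4$ enters precisely at this rigidity step, guaranteeing sufficiently many distinct elementary root subgroups $t_{i,j}(\blank)$ to eliminate non-trivial $\bigwedgem{m}\E(n,R)$-fixed deformations of each $w_k$. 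This is the representation-theoretic counterpart of the explicit equations derived in the preceding subsections; translating between these two viewpoints is precisely the task addressed in the paper's Appendix.
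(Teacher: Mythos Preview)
Your cycle of inclusions matches the paper exactly, and you correctly identify that all the work lies in showing $\Tran_{\GL_N(R)}(\bigwedgem{m}\E(n,R),\bigwedgem{m}\SL_n(R))\leq\bigwedgem{m}\GL_n(R)$. However, your model of the stabilizer presentation contains a genuine gap.

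You write that $\bigwedgem{m}\GL_n$ is the simultaneous stabilizer of individual tensors $w_1,\dots,w_s$, so that each element of $\bigwedgem{m}\SL_n(R)$ \emph{fixes every} $w_k$, and then conclude that each $g^{-1}w_k$ is $\bigwedgem{m}\E(n,R)$-fixed and hence lies in $R\cdot w_k$. This is only correct in the special case $n/m\in\mathbb{N}$, $n\neq 2m$, where Proposition~\ref{StabAndGenerealPower} produces a \emph{single} invariant form $f_{n,m}$; your argument then becomes exactly the short proof recorded in the Remark following the theorem. In the remaining cases (when $m\nmid n$, or $n=2m$ with $m\geq 3$) Propositions~\ref{mNotDividesn} and~\ref{StabPresExc} realize $\bigwedgem{m}\GL_n(R)$ as the stabilizer of an \emph{ideal} $I_{n,m}$ which is a non-trivial irreducible $\GL_n$-module of dimension $>1$. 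Elements of $\bigwedgem{m}\SL_n(R)$ stabilize this subspace but do \emph{not} fix its individual generators, so the step ``$gxg^{-1}$ fixes every $w_k$'' fails, and there is no reason for $g^{-1}w_k$ to lie on the line $R\cdot w_k$.

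What the paper does instead (Propositions~\ref{TransInGeneralPower} and~\ref{TransInGeneralPowerBad}) is work with the translated ideal $I_{n,m}^g$ as a whole: from $g\in\Tran$ one deduces that $I_{n,m}^g$ is $\bigwedgem{m}\E(n,R)$-invariant, and then a density/localization argument shows that $\E(n,R)$-invariant submodules of $\Sym^k(\bigwedgem{m}V)$ (or $\bigwedgem{k}(\bigwedgem{m}V)$) coincide with $\GL_n(R)$-invariant ones. The ideal $I_{n,m}$ was chosen precisely so that it occurs with multiplicity one and has the smallest dimension among the irreducible summands; these two properties force $I_{n,m}^g=I_{n,m}$, hence $g\in\Stabi(I_{n,m})=\bigwedgem{m}\GL_n(R)$. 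Your ``rigidity'' intuition is pointing in the right direction, but the correct object to pin down is the whole submodule, not individual invariant vectors, and the mechanism is multiplicity-one plus dimension minimality rather than the triviality of the fixed-point space.
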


The most significant step in the proof of this Theorem is the inclusion 
$$\Tran_{\GL_{N}(R)}(\bigwedgem{m}\E(n,R), \bigwedgem{m}\SL(n,R) )\leq \bigwedgem{m}\GL_{n}(R).$$ 
Standard proof uses the invariant form defining the group $G_{\rho}(\Phi, R)$ as a stabilizer. However, in our case this forms exist only in some infrequent cases, unlike the classical ones of
\begin{itemize}
\item general orthogonal group  $\GO_{n}(\blank)$,
\item general symplectic group $\GSp_{n}(\blank)$,
\item groups of Lie type $\E_{6}$ 
\end{itemize}

For the remaining cases we consider (skew--symmetric or symmetric) polynomial ideals coming from coordinate ring of Grassmannians $\Gr(m,n)$ (Propositions \ref{StabAndGenerealPower}, \ref{mNotDividesn} for the case $n \neq 2m$ and Proposition \ref{StabPresExc} for the exceptional case $n = 2m$).

\section{Principal notation}\label{princnot}

Our notation is for the most part fairly standard in Chevalley group theory and coincides with the notation in \cite{ VP-EOeven, VP-EOodd,VP-Ep, StepVavDecomp}. We recall all necessary notion to read the present paper independently.

First, let $G$ be an arbitrary group. By a commutator of two elements we always understand \textit{the left-normed} commutator $[x,y]=xyx^{-1}y^{-1}$, where $x,y\in G$. Multiple commutators are also left-normed; in particular, $[x,y,z]=[[x,y],z]$. By ${}^xy=xyx^{-1}$ we denote \textit{the left conjugates} of $y$ by $x$. Similarly, by $y^x=x^{-1}yx$ we denote \textit{the right conjugates} of $y$ by $x$. 

In the sequel, we will use the Hall–Witt identity:
$$[x,y^{-1},z^{-1}]^x\cdot[z,x^{-1},y^{-1}]^z\cdot[y,z^{-1},x^{-1}]^y=e.$$

For a subset $X\subseteq G$, we denote by $\langle X\rangle$ the subgroup it generates. The notation $H\leq G$ means that $H$ is a subgroup in $G$, while the notation $H\trianglelefteq G$ means that $H$ is a normal subgroup in $G$. For $H\leq G$, we denote by $\langle X\rangle^H$ the smallest subgroup in $G$ containing $X$ and normalized by $H$. For two groups $F,H\leq G$, we denote by $[F,H]$ their mutual commutator: $[F,H]=\langle [f,g], \text{ where } f\in F, h\in H\rangle.$

Also, we need some elementary ring theory notation. Let $R$ be an arbitrary associative ring with 1. By default, it is assumed to be commutative. By an ideal $I$ of the ring $R$ we understand \textit{the two-sided ideal} and this is denoted by $I\trianglelefteq R$. As usual, let $R^{*}$ be the multiplicative group of the ring $R$. Let $M(m, n, R)$ be the $R$-bimodule of $(m \times n)$-matrices with entries in $R$, and let $M(n, R) = M(n, n, R)$ be the full matrix ring of degree $n$ over $R$. By $\GL_n(R)=\M(n,R)^{*}$ we denote the general linear group while $\SL_{n}(R)$ is the special linear group of degree $n$ over $R$. As usual, $a_{i,j}$ denotes the entry of a matrix $a$ at the position  $(i,j)$, where $1\leq i,j\leq n$. Further, $e$ denotes the identity matrix and $e_{i,j}$ denotes a standard matrix unit, i.\,e., the matrix that has 1 at the position $(i,j)$ and zeros elsewhere.

By $t_{i,j}(\xi)$ we denote an elementary transvection, i.\,e., a matrix of the form $t_{i,j}(\xi)=e+\xi e_{i,j}$, $1\leq i\neq j\leq n$, $\xi\in R$. In the sequel, we use (without any special reference) standard relations~\cite{StepVavDecomp} among elementary transvections such as
\begin{enumerate}
\item the additivity:
$$t_{i,j}(\xi)t_{i,j}(\zeta)=t_{i,j}(\xi+\zeta).$$
\item the Chevalley commutator formula:
$$[t_{i,j}(\xi),t_{h,k}(\zeta)]=
\begin{cases}
e,& \text{ if } j\neq h, i\neq k,\\
t_{i,k}(\xi\zeta),& \text{ if } j=h, i\neq k,\\
t_{h,j}(-\zeta\xi),& \text{ if } j\neq h, i=k.
\end{cases}$$
\end{enumerate}

Now, let $I$ be an ideal in $R$. Denote by $\E(n,I)$ the subgroup of $\GL_{n}(R)$ generated by all elementary transvections of level $I$:
$$\E(n,I)=\langle t_{i,j}(\xi), 1\leq i\neq j\leq n, \xi\in I\rangle.$$
In the most important case $I = R$, the group $\E(n,R)$ generated by all elementary transvections is called the \textit{(absolute)} elementary group:
$$\E(n,R)=\langle t_{i,j}(\xi), 1\leq i\neq j\leq n, \xi\in R\rangle.$$
It is well known (due to Andrei Suslin~\cite{SuslinSerreConj}) that elementary group is normal in the general linear group $\GL_{n}(R)$ for $n \geq 3$. The normality is crucial for the further considerations, so we suppose that $n \geq 3$.

In the sequel, the \textit{(relative)} elementary group $\E(n,R,I)$ is of great importance. Recall that the group $\E(n,R,I)$ is the normal closure of $\E(n,I)$ in $\E(n,R)$:
$$\E(n,R,I)=\langle t_{i,j}(\xi), 1\leq i\neq j\leq n, \xi\in I\rangle^{\E(n,R)}.$$

This group $\E(n,R,I)$ is normal in $\GL_{n}(R)$ if $n\geq 3$. This fact, first proved in \cite{SuslinSerreConj}, is cited as Suslin’s theorem. Moreover, if $n\geq 3$, then the group $\E(n,R,I)$ is generated by transvections of the form $z_{i,j}(\xi,\zeta)=t_{j,i}(\zeta)t_{i,j}(\xi)t_{j,i}(-\zeta)$, $1\leq i\neq j\leq n$, $\xi\in I$, $\zeta\in R$. This fact was proved by Vaserstein and Suslin~\cite{VasSusSerre} and, in context of Chevalley groups, by Tits~\cite{TitsCongruence}.

By $[n]$ we denote a set $\{1,2,\ldots, n\}$ and by $\bigwedgem{m}[n]$ we denote the exterior power of the set $[n]$. Elements of $\bigwedgem{m}[n]$ are unordered\footnote{In the sequel, we arrange them in ascending order.} subsets $I\subseteq [n]$ of cardinality $m$ without repeating entries:
$$\bigwedgem{m}[n] = \{ (i_{1}, i_{2}, \ldots , i_{m})\; |\; i_{j} \in [n], i_{j} \neq i_{l} \}.$$ 

For a matrix $a \in \GL_{n}(R)$ and for sets $I$, $J$ from $\bigwedgem{m}[n]$, define a minor $M_{I}^{J}(a)$ of the matrix $a$ as follows. $M_{I}^{J}(a)$ equals the determinant of a submatrix formed by rows from the set $I$ and columns from the set $J$. 

By $N$ we denote the binomial coefficient $\binom{n}{m}$. This number always equals the rank of the ambient group $\GL$. For this reason, in the sequel, we will not mention the dependence of the $N$ on $m$, exterior power.

\section{Proofs \& Computations}\label{proofs}

In this section we present all proofs of Main Theorems without skipping any technical details. Primarily we consider a case of the exterior square of the group scheme $\GL_{n}$. There are two reasons for this. Firstly, for $n=4$ Nikolai Vavilov and Victor Petrov completed the standard description of overgroups\footnote{The restriction of the exterior square map $\bigwedgem{2}: \GL_{4}(R) \longrightarrow \GL_{6}(R)$ to the group $\E(4,R)$ is an isomorphism onto the elementary orthogonal group $\EO(6,R)$~\cite{VP-EOeven}.}. Secondly, proofs of all statements in the arbitrary exterior power case are technically more complex analogues for the proofs of the present section. At the same time, simpler proofs present all basic ideas of the general case.

\subsection{Exterior square of elementary groups}\label{square}

Let $R^n$ be a right $R$--module with the basis $\{e_1,\ldots,e_n\}$. Consider the standard action of the group $\GL_{n}(R)$ on 
$R^n$. Define an exterior square of $R$--module as follows. The basis of this module is all exterior products $e_i\wedge e_j$, $1~\leq~i~\neq~j~\leq~n$ and $e_{i} \wedge e_{j} = -e_{j} \wedge e_{i}$. Denote the exterior square of $R^n$ by $\bigwedgem{2}(R^n)$. Now, we define an action of the group $\GL_{n}(R)$ on $\bigwedgem{2}(R^n)$. Firstly, we  define this action on basic elements by the rule
$$g(e_i\wedge e_j):=(ge_i)\wedge(ge_j) \text{ for any } g\in \GL_{n}(R) \text{ and } 1\leq i\neq j\leq n.$$
Secondly, we extend this action by linearity to the whole module $\bigwedgem{2}(R^n)$. Finally, using this action we define a subgroup $\bigwedgem{2}\big(\GL_{n}(R)\big)$ of the general linear group $\GL_{N}(R)$.

In other words, let us consider the Cauchy--Binet homomorphism
$$\bigwedgem{2}:\GL_{n}(R)\longrightarrow \GL_{N}(R),$$
taking each matrix $x\in \GL_{n}(R)$ to the matrix $\bigwedgem{2}(x)\in \GL_{N}(R)$. Elements of $\bigwedgem{2}(x)\in \GL_{N}(R)$ are all second order minors of the matrix $x$. Then the group $\bigwedgem{2}\big(\GL_{n}(R)\big)$ is an image of the general linear group under the Cauchy--Binet homomorphism. It is natural to index elements of the matrix $\bigwedgem{2}(x)$ by pair of elements of the set $\bigwedgem{2}[n]$:
$$\left(\bigwedgem{2}(x)\right)_{I,J}=\left(\bigwedgem{2}(x)\right)_{(i_1,i_2),(j_1,j_2)}=M_{i_1,i_2}^{j_1,j_2}(x) = x_{i_1, j_1}\cdot x_{i_2, j_2} - x_{i_1, j_2}\cdot x_{i_2, j_1}.$$

The following lemma is needed for the sequel; this fact is a corollary of Suslin's theorem.
\begin{lemma}\label{SuslinFor2}
The image of the elementary group is normal in the image of the general linear group under the exterior square homomorphism:
$$\bigwedgem{2}\left(\E(n,R)\right)\trianglelefteq \bigwedgem{2}(\GL_{n}(R)).$$
\end{lemma}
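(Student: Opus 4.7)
The plan is to observe that this lemma is a direct transport along the Cauchy--Binet homomorphism of Suslin's normality theorem, which is already invoked in Section~\ref{princnot}.

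First I would record that the Cauchy--Binet map $\bigwedgem{2}:\GL_{n}(R)\longrightarrow \GL_{N}(R)$ is a group homomorphism: this is essentially the functoriality of the exterior square construction on free modules, or equivalently the multiplicativity of the minor matrix (Cauchy--Binet identity). In particular, for any $g\in\GL_{n}(R)$ and any $x\in\E(n,R)$ one has
$$\bigwedgem{2}(g)\cdot\bigwedgem{2}(x)\cdot\bigwedgem{2}(g)^{-1}=\bigwedgem{2}(gxg^{-1}).$$

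Next I would invoke Suslin's theorem, cited earlier in the paper: since $n\geq 3$, the elementary subgroup $\E(n,R)$ is normal in $\GL_{n}(R)$, so $gxg^{-1}\in\E(n,R)$. Applying $\bigwedgem{2}$ to this element yields that the right-hand side above lies in $\bigwedgem{2}(\E(n,R))$. Since an arbitrary element of $\bigwedgem{2}(\GL_{n}(R))$ has the form $\bigwedgem{2}(g)$ and an arbitrary element of $\bigwedgem{2}(\E(n,R))$ is a product of elements of the form $\bigwedgem{2}(x)$, this shows that conjugation by any element of $\bigwedgem{2}(\GL_{n}(R))$ preserves $\bigwedgem{2}(\E(n,R))$, i.e., $\bigwedgem{2}(\E(n,R))\trianglelefteq\bigwedgem{2}(\GL_{n}(R))$.

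There is no real obstacle here; the statement is the abstract general fact that the image of a normal subgroup under a group homomorphism is normal in the image, specialized to the surjection $\GL_{n}(R)\twoheadrightarrow\bigwedgem{2}(\GL_{n}(R))$ restricted from Cauchy--Binet. The only thing that deserves a sentence of care is verifying that $\bigwedgem{2}$ is genuinely multiplicative, and this is standard. The proof should therefore be only two or three lines.
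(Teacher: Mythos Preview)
Your proposal is correct and matches the paper's treatment: the paper simply states that this lemma ``is a corollary of Suslin's theorem'' and gives no further argument, which is exactly the transport-along-a-homomorphism observation you spell out. There is nothing to add.
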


Consider the structure of the group $\bigwedgem{2}\E(n,R)$ in details. The following proposition can be obtained by the very definition of $\bigwedgem{2}\big(\GL_{n}(R)\big)$.
\begin{proposition} \label{ImageOfTransvFor2}
Let $t_{i,j}(\xi)$ be an elementary transvection. For $n\geq 3$ the transvection $\bigwedgem{2}t_{i,j}(\xi)$ can be presented as the following product:
$$\bigwedgem{2}t_{i,j}(\xi)=\prod\limits_{k=1}^{i-1} t_{ki,kj}(\xi)\,\cdot\prod\limits_{l=i+1}^{j-1}t_{il,lj}(-\xi)\,\cdot\prod\limits_{m=j+1}^n t_{im,jm}(\xi) \eqno(1)$$
for any $1\leq i<j\leq n$.
\end{proposition}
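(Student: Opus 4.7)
The plan is a direct verification: compute the matrix of $\bigwedgem{2}t_{i,j}(\xi)$ in the standard basis $\{e_p\wedge e_q : 1\leq p<q\leq n\}$ of $\bigwedgem{2}(R^n)$ and match it against the matrix produced by the right-hand side of (1). Since $t_{i,j}(\xi)$ acts on $R^n$ by $e_j\mapsto e_j+\xi e_i$ and fixes every other basis vector, the wedge $e_p\wedge e_q$ is moved only when $j\in\{p,q\}$; moreover, we may restrict to $i\notin\{p,q\}$, since otherwise the extra term produced involves $e_i\wedge e_i=0$ and vanishes. This immediately reduces the verification to three cases.

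In the first case, $q=j$ and $p<i$, so $e_p\wedge e_j\mapsto e_p\wedge e_j+\xi(e_p\wedge e_i)$; reading the new weight in ascending order, this is exactly the action of $t_{(k,i),(k,j)}(\xi)$ with $k=p$, matching the first product. In the second case, $q=j$ and $i<p<j$, so after the reordering $e_p\wedge e_i=-e_i\wedge e_p$ one gets $e_p\wedge e_j\mapsto e_p\wedge e_j-\xi(e_i\wedge e_p)$, i.e., $t_{(i,l),(l,j)}(-\xi)$ with $l=p$; this is the sole origin of the $-\xi$ in the middle product. In the third case, $p=j$ and $q>j$, so $e_j\wedge e_q\mapsto e_j\wedge e_q+\xi(e_i\wedge e_q)$, i.e., $t_{(i,m),(j,m)}(\xi)$ with $m=q$, matching the third product. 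All other basis wedges, in particular $e_i\wedge e_j$ itself, are fixed.

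It remains to justify that the factors on the right-hand side of (1) may be composed freely in any order: equivalently, that all the elementary transvections appearing commute pairwise in $\GL_N(R)$. I would verify this by applying the Chevalley commutator formula to an arbitrary pair of weights drawn from the three lists $\{(k,i),(k,j) : k<i\}$, $\{(i,l),(l,j) : i<l<j\}$, and $\{(i,m),(j,m) : m>j\}$. A short case-check shows that in every pair $(I_1,J_1),(I_2,J_2)$ one has $J_1\neq I_2$ and $I_1\neq J_2$ as two-element subsets of $[n]$, because matching any of these would force an equality among the index ranges $\{1,\ldots,i-1\}$, $\{i\}$, $\{i+1,\ldots,j-1\}$, $\{j\}$, $\{j+1,\ldots,n\}$ that never actually occurs. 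With pairwise commutativity established, the full product equals $e$ plus the sum of its individual off-diagonal $\xi$-contributions, and by the case analysis above this sum coincides precisely with the matrix of $\bigwedgem{2}t_{i,j}(\xi)$.

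There is no deep obstacle: the only delicate step is the sign-tracking for the wedge antisymmetry $e_i\wedge e_p=-e_p\wedge e_i$ when $i<p$, which is what forces the $-\xi$ in the middle product. An alternative, equivalent route would be to compute the $2\times 2$ minors $M_{I}^{J}(e+\xi e_{i,j})$ directly, exploiting that $t_{i,j}(\xi)$ is a rank-one perturbation of the identity and hence its minors depend only on whether $i,j\in I$ and $i,j\in J$; the same three non-vanishing patterns emerge and yield the same product decomposition.
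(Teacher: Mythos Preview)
Your proof is correct and follows exactly the route the paper indicates: the paper gives no detailed argument for this proposition, stating only that it ``can be obtained by the very definition of $\bigwedgem{2}\big(\GL_{n}(R)\big)$,'' and your direct computation of the action on the basis wedges $e_p\wedge e_q$ is precisely that. Your explicit verification of pairwise commutativity is also what the paper asserts without proof in the subsequent Remark.
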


\begin{remark}
For $i>j$ similar equality holds:
$$\bigwedgem{2}t_{i,j}(\xi)=\prod\limits_{k=1}^{j-1} t_{ki,kj}(\xi)\,\cdot\prod\limits_{l=j+1}^{i-1}t_{li,jl}(-\xi)\,\cdot\prod\limits_{m=i+1}^n t_{im,jm}(\xi) \eqno(1')$$
\end{remark}

\begin{remark}
A commutator of any two transvections from the right-hand sides of formulas $(1)$ and $(1')$ equals~1. Therefore the commutator with the transvection $\bigwedgem{2}t_{i,j}(\xi)$ is equal to 1 as well.
\end{remark}
It follows from the proposition that $\bigwedgem{2}t_{i,j}(\xi)\in \E^{n-2}(N,R)$, where a set $\E^M(N,R)$ consists of products of $M$ or less elementary transvections.
 
Let $H$ be any overgroup of the exterior square of the elementary group $\bigwedgem{2}\E(n,R)$:
$$\bigwedgem{2}\E(n,R)\leq H \leq \GL_N(R).$$
Consider any different indices $I, J \in \bigwedgem{2}[n]$. By $A_{I,J}$ denote the set 
$$A_{I,J}:=\{\xi \in R \;|\; t_{I,J}(\xi) \in H \} \subseteq R.$$
Let $t_{I,J}(\xi)$ be any elementary transvection. Define \textit{the height} of $t_{I,J}(\xi)$ (generally, of the  pair $(I,J)$) as the cardinality of $I\cap J$:
$$\height(t_{I,J}(\xi))=\height(I,J)=|I\cap J|.$$
This combinatorial characteristic of transvections is useful in simplifying commutator calculations. 

The height splits up all sets $A_{I,J}$ into two classes: the one with $\height(I,J)=0$ and the one with $\height(I,J)=1$. In fact, these classes are equal for $n\geq 6$. The set $A := A_{I,J} $ is called  \textit{the level} of the overgroup $H$. Note that for $n=4$ the level is unique, that follows from~\cite{VP-EOeven}.

\begin{lemma}\label{IdealFor2}
Under a constraint $n\geq 6$ every set $A_{I,J}$ is an ideal in the ring $R$. Moreover, for any $I\neq J$ and $K\neq L$ the ideals $A_{I,J}$ and $A_{K,L}$ coincide.
\end{lemma}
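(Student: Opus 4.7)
The plan is to handle both claims — each $A_{I,J}$ is a two-sided ideal, and all $A_{I,J}$ coincide — simultaneously, by commuting a given transvection $t_{I,J}(\xi) \in H$ against the basic elements $\bigwedgem{2}t_{i,j}(\eta) \in \bigwedgem{2}\E(n,R) \leq H$ and invoking the Chevalley commutator formula. Additive closure of each $A_{I,J}$ is immediate from $t_{I,J}(\xi+\zeta) = t_{I,J}(\xi)\,t_{I,J}(\zeta)$, $t_{I,J}(-\xi) = t_{I,J}(\xi)^{-1}$ and $t_{I,J}(0)=e$, making $A_{I,J}$ an additive subgroup of $R$; the substance of the lemma lies in closure under multiplication by arbitrary $\eta \in R$ and in the coincidence statement.

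The core step I would carry out is as follows. For $\xi \in A_{I,J}$ and $\eta \in R$, I pick indices $i,j \in [n]$ so that in the expansion $(1)$ of $\bigwedgem{2}t_{i,j}(\eta)$ as a product of pairwise commuting height-$1$ transvections $t_{P,Q}(\pm\eta)$, exactly one factor fails to commute with $t_{I,J}(\xi)$ under the Chevalley formula. Since the non-commutation condition (namely $P=J$ or $Q=I$) essentially pins down $P$ or $Q$, such a choice is available whenever sufficiently many indices lie outside $I \cup J$. The commutator $[\bigwedgem{2}t_{i,j}(\eta), t_{I,J}(\xi)] \in H$ then collapses to a single transvection $t_{I',J'}(\pm\eta\xi)$, where $(I',J')$ is obtained from $(I,J)$ by replacing one entry with an index drawn from $\{i,j\}$. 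Specialising $\eta=1$ yields $A_{I,J} \subseteq A_{I',J'}$; arbitrary $\eta$ yields $R\cdot A_{I,J} \subseteq A_{I',J'}$.

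Iterating this transfer produces a graph of one-step inclusions among all pairs of distinct weights in $\bigwedgem{2}[n]$; I would verify that for $n \geq 6$ this graph is connected, both inside each height class ($\height(I,J)\in\{0,1\}$) and across the two classes, with the reverse inclusions following by symmetry. Coincidence of all $A_{I,J}$ together with $R \cdot A_{I,J} \subseteq A_{I',J'} = A_{I,J}$ then yields the ideal property. The main obstacle is the combinatorial case analysis: organising the choice of $(i,j)$ according to the height of $(I,J)$, the intersection of $\{i,j\}$ with $I\cup J$, and the order of $i$ versus $j$ in formulas $(1)$ versus $(1')$, and confirming in each case that exactly one factor of the expansion survives in the commutator. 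It is precisely the need for a pair of auxiliary indices outside $I\cup J$ in order to jump between height classes that forces $n \geq 6$; for smaller $n$ the transfer graph disconnects and one is left with potentially distinct additive subgroups, consistent with the net-based situation foreshadowed in the paragraph following Proposition \ref{InversInclus}.
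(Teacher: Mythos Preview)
Your overall strategy --- commute $t_{I,J}(\xi)\in H$ against $\bigwedgem{2}t_{i,j}(\eta)$ and track which transvections land in $H$ --- is exactly the right one, and the paper does the same. The gap is in the step you gloss over as ``reverse inclusions following by symmetry''. Moves in which \emph{exactly one} factor of $\bigwedgem{2}t_{i,j}(\eta)$ survives (your ``single transvection'' moves; type~(2) in Proposition~\ref{TypesOfComm}) can only \emph{keep or lower} the height. Concretely, the surviving factor has $\{k,i\}=J$ (needs $i\in J$) or $\{k,j\}=I$ (needs $j\in I$); in the first case the result is $t_{I,(J\setminus i)\cup j}$, and since exactly-one-factor forces $j\notin I$, the new height is $|I\cap J|-[\,i\in I\,]\leq\height(I,J)$, and symmetrically in the second case. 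So from a height-$0$ pair you never reach height $1$ by such moves, and the ``reverse'' of a height-$1\to 0$ move is necessarily a three-factor (type~(3)) commutator, not a single transvection. Your transfer graph is therefore not strongly connected across the two height classes, and the inclusion $A_{0}\subseteq A_{1}$ is genuinely missing. (The same issue shows up inside height $1$: single-transvection moves cannot change the shared index.)

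The paper closes this gap with a two-stage trick that does not appear in your outline. One first performs a type-(3) commutation, e.g.\ $[t_{12,34}(\xi),\bigwedgem{2}t_{4,2}(\zeta)]$, and by taking $\zeta$ and $-\zeta$ isolates in $H$ the \emph{product} $t_{14,34}(-\zeta\xi)\,t_{12,23}(-\xi\zeta)$ of two height-$1$ transvections. One then repeats this with a second height-$0$ transvection $t_{45,16}(\xi)$ chosen so that commuting the two resulting products kills all but one factor, yielding a single height-$1$ transvection $t_{45,34}(-\xi^{2}\zeta_{1}\zeta)\in H$. It is precisely the need for this second, nearly disjoint height-$0$ pair that forces $n\geq 6$; your proposal correctly anticipates that $n\geq 6$ enters at the cross-height step, but the mechanism is this double type-(3) argument rather than a symmetry of single-factor moves.
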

\begin{proof} Complete proof is presented in Section~\ref{levelcomputation}, Proposition~\ref{LevelsEqualGeneral}. Here we only give calculations for the case $(n,m) = (4,2)$. The reason is that these calculations in a transparent way present the general idea.
\begin{enumerate}
\item Firstly, take any $\xi\in A_{12,34}$, i.\,e., $t_{12,34}(\xi)\in H$. Then 
$$[t_{12,34}(\xi),\bigwedgem{2}t_{4,2}(\zeta)]=t_{14,23}(-\xi\zeta^2)t_{14,34}(-\zeta\xi)t_{12,23}(-\xi\zeta)\in H.$$
It remains to provide this calculation with $-\zeta$ and to product two right-hand sides; then we obtain $t_{14,23}(-2\xi\zeta^2)\in H$. By the condition $2 \in R^{*}$, this means that $A_{12, 34} \subseteq A_{14, 23}$.  It follows that
$$A_{I,J} \subseteq A_{K,L}\text{ for } I\cup J = K \cup L = \{ 1234 \}.$$
\item Secondly, take any $\xi\in A_{12,34}$, then $[t_{12,34}(\xi),\bigwedgem{2}t_{4,5}(\zeta)]=t_{12,35}(\xi\zeta)$. Consequently, 
$$A_{I,J} \subseteq A_{K,L}\text{ for } \height(I, J) = \height(K, L) = 0.$$
\item Thirdly, let $\xi\in A_{12,13}$, then $[t_{12,13}(\xi),\bigwedgem{2}t_{1,4}(\zeta)]=t_{12,34}(-\xi\zeta)\in H$. Consider two commutators of the last transvection with $\bigwedgem{2}t_{4,1}(\zeta_1)$ and $\bigwedgem{2}t_{4,1}(-\zeta_1)$ respectively. We obtain that $t_{24,13}(\zeta_1^2\xi\zeta)\in H$ and also $t_{12,13}(-\xi\zeta\zeta_1)t_{24,34}(\zeta_1\xi\zeta)\in H$. Hence $t_{24,34}(\zeta_1\xi\zeta)\in H$. This means that 
$$A_{I,J} \subseteq A_{K,L}\text{ for any } \height(I, J) = \height(K, L) = 1.$$
\item Now, take any $\xi\in A_{12,23}$, then $[t_{12,23}(\xi),\bigwedgem{2}t_{4,2}(\zeta)]=t_{14,23}(-\zeta\xi)$. Thus
$$A_{I,J} \subseteq A_{K,L}\text{ for } \height(I, J)= 1, \height(K, L) = 0.$$
\item Finally, let $\xi\in A_{12,34}$. Like in (1), consider the commutator $t_{12,34}(\xi)$ with $\bigwedgem{2}t_{4,2}(\zeta)$. We obtain that $t_{14,23}(-2\xi\zeta^2)\in H$ and also $t_{14,34}(-\zeta\xi)t_{12,23}(-\xi\zeta)\in H$. By the same argument, we can provide these calculations with the transvection $t_{45,16}(\xi)$ and $\bigwedgem{2}t_{6,4}(\zeta_1)$. We get that $t_{56,14}(-2\zeta_1^2\xi)\in H$ and $t_{45,14}(\xi\zeta_1)t_{56,16}(\zeta_1\xi)\in H$. To finish the proof, it remains to commutate last two products. Then $t_{45,34}(-\xi^2\zeta_1\zeta)\in H$, or  
$$A_{I,J} \subseteq A_{K,L}\text{ for }\height(I, J)= 0, \height(K, L) = 1.$$
\end{enumerate}
\end{proof}

The following lemma is crucial for the rest. It is devoted to the alternative description of the relative elementary group.
\begin{lemma}\label{AlterRelatFor2}
Let $n\geq 6$. For any ideal $A\trianglelefteq R$, we have
$$\E(N,A)^{\bigwedgem{2}\E(n,R)}=\E(N,R,A),$$ where by definition $\E(N,R,A)=\E(N,A)^{\E(N,R)}.$
\end{lemma}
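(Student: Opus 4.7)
The forward inclusion $\E(N,A)^{\bigwedgem{2}\E(n,R)}\subseteq\E(N,R,A)$ is immediate: by Proposition \ref{ImageOfTransvFor2}, $\bigwedgem{2}\E(n,R)\subseteq\E(N,R)$, and normal closure of a fixed subgroup is monotone in the ambient group.

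For the reverse inclusion, set $N:=\E(N,A)^{\bigwedgem{2}\E(n,R)}$ and invoke the Vaserstein--Suslin--Tits theorem recalled in the preliminaries: $\E(N,R,A)$ is generated by the elements $z_{I,J}(\xi,\zeta)=t_{J,I}(\zeta)\,t_{I,J}(\xi)\,t_{J,I}(-\zeta)$ with $\xi\in A$, $\zeta\in R$, $I\neq J\in\bigwedgem{2}[n]$. It suffices to show each such $z_{I,J}(\xi,\zeta)$ lies in $N$, and the argument splits by $\height(I,J)\in\{0,1\}$.

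For $\height(I,J)=1$, write $I=\{a,c\}$ and $J=\{b,c\}$ with $a,b,c$ distinct. Formula $(1)/(1')$ of Proposition \ref{ImageOfTransvFor2} decomposes $\bigwedgem{2}t_{b,a}(\zeta)$ into a product of $n-2$ pairwise commuting elementary transvections; the factor indexed by the shared position $c$ is precisely $t_{J,I}(\pm\zeta)$, while each of the remaining $n-3$ factors (indexed by $k\in[n]\setminus\{a,b,c\}$) commutes with $t_{I,J}(\xi)=t_{\{a,c\},\{b,c\}}(\xi)$ by a one-line Chevalley check, because its row-column pair shares an index $k\neq c$. Hence conjugation of $t_{I,J}(\xi)$ by $\bigwedgem{2}t_{b,a}(\zeta)\in\bigwedgem{2}\E(n,R)$ collapses to conjugation by its single $t_{J,I}(\pm\zeta)$ factor, yielding $z_{I,J}(\xi,\pm\zeta)\in N$ immediately from the definition of $N$.

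The real obstacle is the height-$0$ case, where no single $\bigwedgem{2}t_{j,i}(\zeta)$ contains a height-$0$ transvection as a factor, since formula $(1)/(1')$ only produces transvections of height $1$. My plan is to bootstrap from Case~1. Choose a bridge weight $K\in\bigwedgem{2}[n]$ with $\height(K,I)=\height(K,J)=1$ (such $K$ exists because $|I\cup J|=4$ and $n\geq 6$ leaves ample room), write $t_{I,J}(\xi)=[t_{I,K}(1),\,t_{K,J}(\xi)]$, and push $t_{J,I}(\zeta)$ through this commutator using Chevalley; one obtains
$$z_{I,J}(\xi,\zeta)=\big[\,t_{J,K}(\zeta)\cdot t_{I,K}(1),\ t_{K,I}(-\xi\zeta)\cdot t_{K,J}(\xi)\,\big].$$
Expanding via the standard identities for $[XY,ZW]$, the two non-degenerate base commutators collapse to level-$A$ transvections already in $\E(N,A)\subseteq N$, while the two degenerate ``root vs.\ negative root'' commutators $[t_{J,K}(\zeta),t_{K,J}(\xi)]$ and $[t_{I,K}(1),t_{K,I}(-\xi\zeta)]$ unfold as height-$1$ Vaserstein--Suslin generators $z_{K,J}(\xi,\zeta)$ and $z_{K,I}(-\xi\zeta,1)$ (times level-$A$ transvections), which belong to $N$ by Case~1. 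The remaining technical step is to unwind the conjugations by the individual $X,Y,Z,W$ factors arising from the $[XY,ZW]$ expansion into further height-$1$ Vaserstein--Suslin generators and level-$A$ transvections; the hypothesis $n\geq 6$ is exactly what guarantees that every intermediate commutator produced in this unwinding stays in a non-degenerate Chevalley regime, so that everything ultimately reduces to Case~1 plus generators of $\E(N,A)$.
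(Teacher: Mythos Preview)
Your strategy is the paper's strategy: Vaserstein--Suslin generators, height-$1$ handled by a single conjugation by $\bigwedgem{2}t_{b,a}(\zeta)$ (your Case~1 argument is exactly the paper's one-liner $z_{ij,hi}(\xi,\zeta)={}^{\bigwedgem{2}t_{h,j}(\zeta)}t_{ij,hi}(\xi)$, just with the commutation of the spectator factors spelled out), and height-$0$ handled by a bridge weight $K$ with $\height(K,I)=\height(K,J)=1$ followed by the $[ab,cd]$ expansion.

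The one substantive difference is where you place $\xi$. You write $t_{I,J}(\xi)=[t_{I,K}(1),t_{K,J}(\xi)]$; the paper writes $t_{I,J}(\xi)=[t_{I,K}(\xi),t_{K,J}(1)]$. After conjugation by $t_{J,I}(\zeta)$, the paper's choice produces $[ab,cd]$ with $a,b\in\E(N,A)$, so in the identity $[ab,cd]={}^a[b,c]\cdot{}^{ac}[b,d]\cdot[a,c]\cdot{}^c[a,d]$ the exponent $a$ is already in $F$ and can be dropped outright; each of the four pieces is then checked in two or three lines. Your placement puts the level-$A$ factors in the $c,d$ slots instead, so the outer conjugations in the same identity are by $X=t_{J,K}(\zeta)$ and $Y=t_{I,K}(1)$, which lie in neither $\E(N,A)$ nor $\bigwedgem{2}\E(n,R)$. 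That is why you are forced into a further ``unwinding'' step. It does go through (each such conjugate of a level-$A$ transvection or of a height-$1$ $z$-generator decomposes again into such pieces), but it is strictly more work; swapping $\xi$ to the first factor removes the difficulty entirely.

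Finally, your two appeals to $n\geq 6$ are both misplaced. The bridge $K$ is taken inside $I\cup J$ (e.g.\ $K=\{i_1,j_1\}$), so no fifth index is needed, and every transvection appearing in the $[ab,cd]$ expansion --- in your version and the paper's --- involves only the four indices of $I\cup J$. The paper's own proof never uses the hypothesis $n\geq 6$; it is carried in the statement for consistency with the level computation, not for this lemma.
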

\begin{proof}
The inclusion $\leq$ is trivial. By Vaserstein--Suslin's lemma~\cite{VasSusSerre}, the group $\E(N,R,A)$, generated by elements of the form
$$z_{ij,hk}(\xi,\zeta) = z_{I,J}(\xi,\zeta) = t_{J,I}(\zeta)\,t_{I,J}(\xi)\,t_{J,I}(-\zeta), \;\xi \in A, \zeta \in R.$$
Hence to prove the reverse inclusion, it suffices to check that for any $\xi \in A$, $\zeta \in R$, the
matrix $z_{ij,hk}(\xi,\zeta)$ belongs to $F:=\E(N,A)^{\bigwedgem{2}\E(n,R)}$.

Let us consider two cases:
\begin{itemize}
\item Suppose that there exists one pair of the same indices. Without loss of generality, we can assume that $i=k$. Then this inclusion is obvious:
$$z_{ij,hi}(\xi,\zeta)={}^{t_{hi,ij}(\zeta)}t_{ij,hi}(\xi)={}^{\bigwedgem{2}t_{h,j}(\zeta)}t_{ij,hi}(\xi)\in F.$$
\item Thus, we are left with the inclusion $z_{ij,hk}(\xi,\zeta)\in F$ with different indices $i,\,j\,,h\,,k$. Firstly, we express
$t_{ij,hk}(\xi)$ as a commutator of elementary transvections:
$$z_{ij,hk}(\xi,\zeta)={}^{t_{hk,ij}(\zeta)}t_{ij,hk}(\xi)={}^{t_{hk,ij}(\zeta)}[t_{ij,jh}(\xi),t_{jh,hk}(1)].$$
Conjugating the arguments of the commutator by $t_{hk,ij}(\zeta)$, we get
$$z_{ij,hk}(\xi,\zeta)=[t_{ij,jh}(\xi)t_{hk,jh}(\zeta \xi),t_{jh,ij}(-\zeta)t_{jh,hk}(1)] =:[ab,cd].$$
Next, we decompose the right-hand side with the help of the formula 
$$[ab,cd] ={}^a[b,c]\cdot {}^{ac}[b,d]\cdot[a,c]\cdot {}^c[a,d],$$
and observe that the exponent $a$ belongs to $\E(N,A)$ so can be ignored. Now a direct calculation, based upon the Chevalley commutator formula, shows that
\begin{align*}
[b,c]&=[t_{hk,jh}(\zeta \xi),t_{jh,ij}(-\zeta)]=t_{hk,ij}(-\zeta^2 \xi) \in \E(N,A);\\
^c[b,d]&={}^{t_{jh,ij}(-\zeta)}[t_{hk,jh}(\zeta \xi),t_{jh,hk}(1)]=\\
&=t_{hk,ik}(-\xi\zeta^2(1+\xi\zeta))t_{jh,ik}(-\xi\zeta^2)\cdot \,{}^{\bigwedgem{2}t_{h,i}(\zeta)}[t_{hk,jh}(\xi\zeta),\bigwedgem{2}t_{j,k}(-1)];\\
[a,c]&=[t_{ij,jh}(\xi),t_{jh,ij}(-\zeta)]=[t_{ij,jh}(\xi),\bigwedgem{2}t_{h,i}(-\zeta)];\\
^c[a,d]&={}^{t_{jh,ij}(-\zeta)}[t_{ij,jh}(\xi),t_{jh,hk}(1)]=\\
&=t_{jh,ik}(\xi\zeta^2)t_{ij,ik}(-\xi\zeta)\cdot\,{}^{\bigwedgem{2}t_{h,i}(\zeta)}[t_{ij,jh}(\xi),\bigwedgem{2}t_{j,k}(-1)],
\end{align*}
where all factors on the right-hand side belong to $F$.
\end{itemize}
\end{proof}

\begin{remark}
The attentive reader can remark that these calculations are almost completely coincide with the calculations for the orthogonal and symplectic cases~\cite{VP-EOeven,VP-EOodd,VP-Ep}. In the special case $(n,m) = (4,2)$ calculations are the same due to the isomorphism $\bigwedgem{2}\E(4,R) \cong \EO(6,R)$. Amazingly that this argument proves similar proposition in the case of general exterior power (see Section~\ref{levelcomputation}, Lemma~\ref{AlterRelatForm}).
\end{remark}

\begin{corollary}\label{CorolOfL3}
Let $A$ be an arbitrary ideal of $R$; then
$$\bigwedgem{2}\E(n,R)\cdot\E(N,R,A)=\bigwedgem{2}\E(n,R)\cdot\E(N,A).$$
\end{corollary}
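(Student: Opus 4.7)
The plan is to verify both inclusions separately. The containment $\bigwedgem{2}\E(n,R)\cdot\E(N,A)\subseteq\bigwedgem{2}\E(n,R)\cdot\E(N,R,A)$ is immediate from $\E(N,A)\leq\E(N,R,A)$.

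For the reverse inclusion, I would argue that the product set $S:=\bigwedgem{2}\E(n,R)\cdot\E(N,A)$ is already a subgroup of $\GL_N(R)$. Once $S$ is known to be a subgroup, it automatically contains $\E(N,A)$ and, containing $\bigwedgem{2}\E(n,R)$ as a subgroup, is normalized by $\bigwedgem{2}\E(n,R)$; by Lemma \ref{AlterRelatFor2} it then contains $\E(N,A)^{\bigwedgem{2}\E(n,R)}=\E(N,R,A)$, which yields the required $\bigwedgem{2}\E(n,R)\cdot\E(N,R,A)\subseteq\bigwedgem{2}\E(n,R)\cdot S=S$.

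The heart of the argument is thus the closure of $S$ under multiplication, which (since both factors are subgroups) is equivalent to the set-theoretic inclusion $\E(N,A)\cdot\bigwedgem{2}\E(n,R)\subseteq\bigwedgem{2}\E(n,R)\cdot\E(N,A)$. Reducing to generators, it suffices to show that for every $y=t_{I,J}(\xi)$ with $\xi\in A$ and every $g=\bigwedgem{2}t_{k,l}(\zeta)$, the product $yg$ lies in $S$. Writing $yg=g\cdot(g^{-1}yg)$ and expanding $g$ via formula $(1)$ of Proposition \ref{ImageOfTransvFor2} as a product of $n-2$ pairwise commuting elementary transvections $b_r$, the conjugate $g^{-1}yg$ can be evaluated by iterated use of the Chevalley commutator identity. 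In every subcase but one, each elementary commutator $[b_r^{\pm1},z]$ that appears is either trivial or a single elementary transvection whose parameter is divisible by $\xi$; the iterated conjugate is therefore a product of elementary transvections lying in $\E(N,A)\subseteq S$.

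The main obstacle is the \emph{reflecting} subcase, in which $(I,J)$ coincides with the pair of reversed indices of some constituent transvection $b_{r_0}$. In that configuration the Chevalley formula does not apply and the commutator $[b_{r_0}^{\pm1},y]$ is not a single transvection but a matrix supported on the corresponding $2\times 2$-block, with all off-diagonal entries lying in $A$ and with diagonal entries of the form $1+a$, $a\in A$. I would handle this subcase by a direct $2\times 2$-block computation, expressing the result as $g'\cdot y'$ with $g'\in\bigwedgem{2}\E(n,R)$ and $y'\in\E(N,A)$, via the Hall--Witt identity together with the identity $xyx^{-1}=[x,y]\cdot y$, in the same spirit as the manipulations carried out in the proof of Lemma \ref{AlterRelatFor2}. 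Once the reflecting subcase is disposed of, closure of $S$ is established and the corollary follows.
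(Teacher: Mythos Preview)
The paper gives no proof: the corollary is meant to be read off Lemma~\ref{AlterRelatFor2} in one line. Since $\E(N,R,A)=\E(N,A)^{\bigwedgem{2}\E(n,R)}$ is normalized by $\bigwedgem{2}\E(n,R)$, the product $\bigwedgem{2}\E(n,R)\cdot\E(N,R,A)$ is a subgroup; it contains both $\bigwedgem{2}\E(n,R)$ and $\E(N,A)$, while conversely $\langle\bigwedgem{2}\E(n,R),\E(N,A)\rangle$ is normalized by $\bigwedgem{2}\E(n,R)$ and hence contains $\E(N,A)^{\bigwedgem{2}\E(n,R)}=\E(N,R,A)$. Thus $\bigwedgem{2}\E(n,R)\cdot\E(N,R,A)=\langle\bigwedgem{2}\E(n,R),\E(N,A)\rangle$, which is exactly how the paper uses the corollary (see the sentence following Theorem~\ref{LevelFor2}).

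You instead read the right-hand side as the literal set product and try to prove that $S=\bigwedgem{2}\E(n,R)\cdot\E(N,A)$ is already a subgroup. This is a stronger statement than the paper needs, and your argument for it has two real gaps.

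\emph{First}, the reduction to generators on both sides is not valid in the form you state it. If in the reflecting subcase you only obtain $yg=g'y'$ with a \emph{new} $g'\in\bigwedgem{2}\E(n,R)$ (rather than $g^{-1}yg\in\E(N,A)$), then when you try to push a product $y_1\cdots y_p$ past $g_1\cdots g_q$ you are immediately forced to move a generator $y_i$ past an \emph{arbitrary} element $g'\in\bigwedgem{2}\E(n,R)$, not a generator; the induction you sketch does not close up. The non-reflecting cases do give $g^{-1}yg\in\E(N,A)$ and would allow a clean induction, but one reflecting instance spoils it.

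\emph{Second}, the reflecting subcase is not actually handled. Concretely, with $I=\{a,l\}$, $J=\{a,k\}$ and $g=\bigwedgem{2}t_{k,l}(\zeta)$, only the factor $t_{ak,al}(\pm\zeta)$ of $g$ fails to commute with $t_{I,J}(\xi)$, and one gets
\[
g^{-1}\,t_{I,J}(\xi)\,g \;=\; z_{I,J}(\xi,\pm\zeta),
\]
which in general lies in $\E(N,R,A)\setminus\E(N,A)$. Saying you would ``express the result as $g'\cdot y'$ via a $2\times 2$-block computation and Hall--Witt'' is not a proof: you neither exhibit $g'\in\bigwedgem{2}\E(n,R)$ and $y'\in\E(N,A)$ with $t_{I,J}(\xi)\,g=g'y'$, nor explain why the manipulations of Lemma~\ref{AlterRelatFor2} (which land in $\E(N,A)^{\bigwedgem{2}\E(n,R)}$, not in $\E(N,A)$) would help here.

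If you want the set-product statement, you must either produce the missing factorization in the reflecting case \emph{and} repair the induction, or argue by a different route (for instance, show directly that every $z_{I,J}(\xi,\zeta)$ lies in $\bigwedgem{2}\E(n,R)\cdot\E(N,A)$ and that this set is stable under left multiplication by such elements). As written, the proposal does not establish the corollary.
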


Summarizing the above two lemmas, we get the following crucial result.
\begin{theorem}[Level Computation]\label{LevelFor2}
Let $n\geq 6$, let $H$ be a subgroup in $\GL_{N}(R)$ containing $\bigwedgem{2}\E(n,R)$. Then there
exists a unique largest ideal $A\trianglelefteq R$ such that
$$\bigwedgem{2}\E(n,R)\cdot\E(N,R,A)\leq H.$$
Namely, if $t_{I,J}(\xi)\in H$ for some $I$ and $J$, then $\xi\in A$.
\end{theorem}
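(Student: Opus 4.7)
The plan is essentially to assemble Lemma~\ref{IdealFor2}, Lemma~\ref{AlterRelatFor2}, and Corollary~\ref{CorolOfL3}. The heavy lifting has already been done in those results, so the argument reduces to a sequence of formal manipulations together with one uniqueness/maximality check.

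First, define $A$ to be the common value of the sets $A_{I,J}$ for $I\neq J$ in $\bigwedgem{2}[n]$. By Lemma~\ref{IdealFor2} (applicable because $n\geq 6$), this common value exists, is independent of the choice of indices, and is a two-sided ideal of $R$. By the very definition of $A_{I,J}$, every elementary transvection $t_{I,J}(\xi)$ with $\xi\in A$ lies in $H$; hence the absolute elementary subgroup of level $A$ is contained in $H$:
$$\E(N,A)\leq H.$$
Combining this with the standing hypothesis $\bigwedgem{2}\E(n,R)\leq H$, we obtain
$$\bigwedgem{2}\E(n,R)\cdot\E(N,A)\leq H.$$

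Next, I would invoke Corollary~\ref{CorolOfL3} to replace the absolute elementary subgroup on the left by the relative one:
$$\bigwedgem{2}\E(n,R)\cdot\E(N,R,A)=\bigwedgem{2}\E(n,R)\cdot\E(N,A)\leq H.$$
This yields exactly the desired inclusion. Recall that Corollary~\ref{CorolOfL3} itself rests on Lemma~\ref{AlterRelatFor2}, which identifies $\E(N,A)^{\bigwedgem{2}\E(n,R)}$ with $\E(N,R,A)$; the step where one normalises $\E(N,A)$ by $\bigwedgem{2}\E(n,R)$ rather than by the full elementary group $\E(N,R)$ is the only delicate point, and it is handled there.

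It remains to address the uniqueness and maximality of $A$. For the final ``namely'' clause, if $t_{I,J}(\xi)\in H$, then $\xi\in A_{I,J}=A$ by definition. For maximality, suppose $B\trianglelefteq R$ is any ideal satisfying $\bigwedgem{2}\E(n,R)\cdot\E(N,R,B)\leq H$. Then for every $I\neq J$ and every $\xi\in B$ the transvection $t_{I,J}(\xi)$ lies in $H$, so $\xi\in A_{I,J}=A$; hence $B\subseteq A$. In particular the ideal $A$ is the unique largest ideal with the required property.

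The main conceptual obstacle has already been overcome in the preceding lemmas: Lemma~\ref{IdealFor2} required a careful case analysis on heights to show that the $A_{I,J}$ glue into a single ideal, and Lemma~\ref{AlterRelatFor2} required the Vaserstein--Suslin description of relative elementary generators together with a Hall--Witt style commutator decomposition. Given these inputs, the present theorem is a short assembly argument with no further surprises.
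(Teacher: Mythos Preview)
Your proposal is correct and follows exactly the route the paper takes: the theorem is stated there simply as ``summarizing the above two lemmas'', meaning Lemma~\ref{IdealFor2} and Lemma~\ref{AlterRelatFor2} (via Corollary~\ref{CorolOfL3}), with the maximality check left implicit. Your write-up makes that assembly explicit and adds the straightforward verification of uniqueness; nothing more is needed.
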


Lemma~\ref{AlterRelatFor2} asserts precisely that $\bigwedgem{2}\E(n,R)\cdot\E(N,R,A)$ is generated as a subgroup by transvections $\bigwedgem{2}t_{i,j}(\zeta)$, $\zeta\in R$, and by elementary transvections $t_{ij,hk}(\xi)$, $\xi\in A$, of the level $A$. As always, we assume that $n\geq 5$ and $2\in R^{*}$.

In the rest of the section, we prove the perfectness of the bottom restriction subgroup from the last Theorem.
\begin{lemma}\label{PerfectFor2}
Let $n\geq 6$. The group $\bigwedgem{2}\E(n,R)\cdot\E(N,R,A)$ is perfect for any ideal $A\trianglelefteq R$.
\end{lemma}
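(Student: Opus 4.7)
The plan is to exploit the description of $G:=\bigwedgem{2}\E(n,R)\cdot\E(N,R,A)$ coming from Lemma~\ref{AlterRelatFor2} and Corollary~\ref{CorolOfL3}: $G$ is generated by $\bigwedgem{2}\E(n,R)$ together with the elementary transvections $t_{I,J}(\xi)$, $\xi\in A$, that generate $\E(N,A)$. Since $[G,G]$ is normal in $G$, once $\E(N,A)\leq[G,G]$ is verified, conjugation by $\bigwedgem{2}\E(n,R)\subset G$ automatically forces $\E(N,R,A)=\E(N,A)^{\bigwedgem{2}\E(n,R)}\leq[G,G]$ as well. Thus perfectness reduces to exhibiting each member of these two families of generators as a product of commutators in $G$.

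The inclusion $\bigwedgem{2}\E(n,R)\leq[G,G]$ is immediate from the perfectness of $\E(n,R)$ for $n\geq 3$: the Chevalley formula gives $t_{i,j}(\zeta)=[t_{i,k}(\zeta),t_{k,j}(1)]$ for any third index $k$, and applying the homomorphism $\bigwedgem{2}$ produces $\bigwedgem{2}t_{i,j}(\zeta)=[\bigwedgem{2}t_{i,k}(\zeta),\bigwedgem{2}t_{k,j}(1)]\in[\bigwedgem{2}\E(n,R),\bigwedgem{2}\E(n,R)]\leq[G,G]$.

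For $\E(N,A)\leq[G,G]$, I would realize each generator $t_{I,J}(\xi)$, $\xi\in A$, as a single commutator $[u,v]$ where $u\in\E(N,A)\subset G$ and $v\in\bigwedgem{2}\E(n,R)\subset G$, splitting by height. If $\height(I,J)=1$, say $I=\{i,j\}$ and $J=\{j,k\}$, choose any $h\in[n]\setminus\{i,j,k\}$ (exists since $n\geq 4$) and consider $[t_{ij,hj}(\xi),\bigwedgem{2}t_{h,k}(1)]$. If $\height(I,J)=0$, say $I=\{i,j\}$ and $J=\{h,k\}$ with $i,j,h,k$ distinct, choose any $l\in[n]\setminus\{i,j,h,k\}$ (exists since $n\geq 5$) and consider $[t_{ij,kl}(\xi),\bigwedgem{2}t_{l,h}(1)]$. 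In each case Proposition~\ref{ImageOfTransvFor2} expands $\bigwedgem{2}t(1)$ as a product of pairwise commuting elementary transvections, and a direct application of the Chevalley commutator formula should show that, for the auxiliary index chosen, exactly one factor fails to commute with the first argument and the surviving contribution is precisely $t_{I,J}(\pm\xi)$.

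The main obstacle is this last combinatorial verification: one must certify that only one factor in the expansion of $\bigwedgem{2}t(1)$ interacts non-trivially with the chosen transvection of level $A$, so that no stray cross-terms appear and a single clean target transvection is produced. Once this check is done, the overall signs coming from Proposition~\ref{ImageOfTransvFor2} are irrelevant because $A$ is closed under negation. The index constraints $n\geq 4$ (height~$1$) and $n\geq 5$ (height~$0$) needed for the argument are comfortably covered by the standing hypothesis $n\geq 6$.
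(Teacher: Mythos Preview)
Your argument is correct and follows essentially the same route as the paper: show each generator of $G$ is a commutator of an element of $\E(N,A)$ with a $\bigwedgem{2}$-transvection, handling $\bigwedgem{2}t_{i,j}(\zeta)$ via Cauchy--Binet and $t_{I,J}(\xi)$ by a single Chevalley commutator. The only cosmetic difference is in the height-$0$ case: the paper uses the intermediate weight $jh$ already built from $I\cup J$ (writing $t_{ij,hk}(\xi)=[t_{ij,jh}(\xi),\bigwedgem{2}t_{j,k}(\pm1)]$, so no fifth index is needed), whereas you introduce a fresh index $l$; both choices produce exactly one non-commuting factor in the expansion of $\bigwedgem{2}t(1)$, and your ``main obstacle'' is just this routine check.
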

\begin{proof}
It suffices to verify that all generators of the group $\bigwedgem{2}\E(n,R)\cdot\E(N,R,A)$ lie in its commutator
subgroup, which we denote by $F$.
\begin{itemize}
\item For the transvections $\bigwedgem{2}t_{i,j}(\zeta)$ this follows from the Cauchy--Binet homomorphism:
$$\bigwedgem{2}t_{i,j}(\zeta)=\bigwedgem{2}([t_{i,h}(\zeta),t_{h,j}(1)])=\left[\bigwedgem{2}t_{i,h}(\zeta),\bigwedgem{2}t_{h,j}(1)\right].$$
\item On the other hand, for the transvections $t_{ij,hk}(\xi)$ with different indices $i,\,j\,,h\,,k$ and $\xi\in A$ this is obvious:
$$t_{ij,hk}(\xi)=[t_{ij,jh}(\xi),t_{jh,hk}(1)]=\left[t_{ij,jh}(\xi),\bigwedgem{2}t_{j,k}(\pm 1)\right]\in F.$$
If there exists a pair of the same indices, then 
$$t_{ij,hi}(\xi)=[t_{ij,ik}(\xi),t_{ik,hi}(1)]=\left[t_{ij,ik}(\xi),\bigwedgem{2}t_{k,h}(\pm 1)\right]\in F.$$ 
\end{itemize}
\end{proof}

\subsection{Exterior powers of elementary groups}\label{general}

In this section we generalize all previous statements to the case of an arbitrary exterior power functor. 

Let us define $m$-th exterior power of the $R$--module $R^n$ as follows. The basis of this module is all exterior products $e_{i_1}\wedge\ldots\wedge e_{i_m}$, where $1\leq i_1<\ldots<i_m\leq n$, such that $e_{\sigma(i_{1})}\wedge\ldots\wedge e_{\sigma(i_{m})} = \sign(\sigma)\, e_{i_1} \wedge \ldots \wedge e_{i_{m}}$ for any permutation $\sigma$ in the symmetric group $S_{m}$. Denote the exterior power of $R^n$ by $\bigwedgem{m}(R^n)$. Now, we define an action of the group $\GL_{n}(R)$ on basis elements of $\bigwedgem{m}(R^n)$ by the rule
$$g(e_{i_1}\wedge\ldots\wedge e_{i_m}):=(ge_{i_1})\wedge\ldots\wedge (ge_{i_m}).$$
By linearity we can extend the action to the whole module $\bigwedgem{m}(R^n)$. 

In other words, we can consider the Cauchy--Binet homomorphism $\bigwedgem{m}:\GL_{n}(R)\longrightarrow \GL_{\binom{n}{m}}\left(R\right)$ taking each matrix $x\in \GL_{n}(R)$ to the matrix $\bigwedgem{m}(x)\in \GL_{N}(R)$. Elements of $\bigwedgem{m}(x)\in \GL_{N}(R)$ are all $m$-order minors of the matrix $x$. More precisely, it is natural to index the entries of matrix $\bigwedgem{m}(x)$ by pairs of elements of the set $\bigwedgem{m}[n]$:
$$\left(\bigwedgem{m}(x)\right)_{I,J}=\left(\bigwedgem{m}(x)\right)_{(i_1,\dots,i_m),(j_1,\dots,j_m)}=M_{i_1,\dots,i_m}^{j_1,\dots,j_m}(x).$$
Note that $\bigwedgem{m}[n]$ is a set of weights for the group scheme $\bigwedgem{m}\GL_{n}$, while the whole set $[n]^{m}$ is a weight set for the group scheme $\GL_{\binom{n}{m}}$.

Then a group $\bigwedgem{m}\big(\GL_{n}(R)\big)$ by definition is an image of the general linear group under the Cauchy--Binet homomorphism.

We cannot but emphasize the difference between the groups\footnote{The same strict inclusions are still true with change $\SL$ to $\GL$.}
$$\bigwedgem{m} \big(\SL_{n}(R)\big) < \bigwedgem{m}\SL_{n}(R) < \SL_{\binom{n}{m}}(R).$$
The first one is obtained as a \textbf{group-theoretic} image of the [abstract] group $\SL_{n}(R)$ under the Cauchy--Binet homomorphism, while the second one is a group of $R$-points of the \textbf{categorical} image of the group scheme $\SL_{n}$ under the natural transformation corresponding to the previous Cauchy--Binet homomorphism. And as can be seen by dimension computation both last groups do not equal to the ambient group $\SL_{\binom{n}{m}}(R)$. We refer the reader to~\cite{VavPere} for more precise results about the difference between the last three groups.

We assume that $n \geq 2m$ due to an isomorphism $\bigwedgem{m}V^{*} \cong (\bigwedgem{\dim(V)-m}V)^{*}$ for an arbitrary free $R$-module $V$. As in Section~\ref{square}, $\bigwedgem{m}\E(n,R)$ is a normal subgroup of $\bigwedgem{m}(\GL_{n}(R))$ by Suslin's lemma. For the further computations we calculate an exterior power of an elementary transvection in the following proposition. The proof is a straightforward by the very definition of the [classical] Binet--Cauchy homomorphism.

\begin{proposition}\label{ImageOfTransvForm}
Let $t_{i,j}(\xi)$ be an elementary transvection, let $L$ be a naturally ordered subset of $[n]$ of cardinality $m-1$. Numbers $i$, $j$ does not belong to $L$. Denote by $\sign(L,i,j)$ the sign of the permutation $(L,i,j)$\footnote{ I.\,e., the permutation 
$\bigl(\begin{smallmatrix}
1 & 2 & \dots & n-1 & n & n+1\\
&& L & &  i  & j
\end{smallmatrix}\bigr)$.}. Then for $n\geq 3$, we have
$$\bigwedgem{m}t_{i,j}(\xi)=\prod\limits_{L\,\in\,\bigwedgem{m-1}\,[n\setminus \{i,j\}]} t_{L\cup i,L\cup j}(\sign(L,i,j)\xi) \eqno(m)$$
for any $1\leq i<j\leq n$.
\end{proposition}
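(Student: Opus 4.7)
The plan is a direct computation on the standard basis of $\bigwedgem{m}(R^n)$, followed by an observation that the transvections appearing on the right-hand side commute pairwise, so their product is obtained simply by summing the off-diagonal contributions.

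First, I would fix $I=\{i_1<\dots<i_m\}\in\bigwedgem{m}[n]$ and compute the image of $e_{(I)}=e_{i_1}\wedge\dots\wedge e_{i_m}$ under $t_{i,j}(\xi)$ acting diagonally on each tensor factor. Since $t_{i,j}(\xi)$ fixes every $e_k$ with $k\neq j$ and sends $e_j$ to $e_j+\xi e_i$, three cases appear:
\begin{itemize}
\item if $j\notin I$, then $e_{(I)}$ is fixed;
\item if $j\in I$ but $i\in I$, then the summand involving $\xi$ contains two copies of $e_i$ and vanishes, so $e_{(I)}$ is again fixed;
\item if $j\in I$ and $i\notin I$, let $L:=I\setminus\{j\}\subseteq[n]\setminus\{i,j\}$, so that $e_{(I)}=\sign(L,j)\cdot e_L\wedge e_j$ and $e_{(I')}=\sign(L,i)\cdot e_L\wedge e_i$, where $I':=L\cup\{i\}$. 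Then a one-line wedge manipulation gives
\[
t_{i,j}(\xi)\,e_{(I)}=e_{(I)}+\xi\,\sign(L,j)\sign(L,i)\cdot e_{(I')}.
\]
\end{itemize}
The next step is to verify the sign identity $\sign(L,j)\sign(L,i)=\sign(L,i,j)$. This reduces to counting inversions in the sequence $(l_1,\dots,l_{m-1},i,j)$: the element $i$ produces exactly as many inversions as elements of $L$ that exceed $i$, and similarly for $j$; there are no inversions between $i$ and $j$ because $i<j$. Hence the matrix entry of $\bigwedgem{m}t_{i,j}(\xi)$ at position $(I',I)$ equals $\sign(L,i,j)\,\xi$, exactly where $I=L\cup j$, $I'=L\cup i$, and is zero otherwise (off the diagonal).

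Next I would analyze the product on the right-hand side of $(m)$. The indices of the factor $t_{L\cup i,\,L\cup j}(\sign(L,i,j)\xi)$ satisfy: the ``row'' weight $L\cup i$ contains $i$ but not $j$, and the ``column'' weight $L\cup j$ contains $j$ but not $i$. Therefore for two distinct subsets $L_1,L_2\in\bigwedgem{m-1}[n\setminus\{i,j\}]$ the equality $L_1\cup j=L_2\cup i$ is impossible (the right side contains $i$ but not $j$, contradicting $j\in L_1\cup j$); similarly $L_1\cup i\neq L_2\cup j$. Via the Chevalley commutator formula this ensures that the factors commute pairwise and that the product of any two of them has no quadratic ``overlap'' term. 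Consequently the product equals the identity matrix plus the sum $\sum_L \sign(L,i,j)\,\xi\cdot e_{L\cup i,\,L\cup j}$, which coincides with the matrix of $\bigwedgem{m}t_{i,j}(\xi)$ computed above.

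I do not expect a genuine obstacle: both the action computation and the commutativity check are mechanical. The only delicate point is the sign bookkeeping in step two, but once one fixes the convention that $\sign(L,i,j)$ means the signature of the permutation sorting $(l_1,\dots,l_{m-1},i,j)$, the inversion count settles it at once. The assumption $n\geq 3$ is used only implicitly to guarantee that the index set $\bigwedgem{m-1}[n\setminus\{i,j\}]$ is nonempty (equivalently, that $\bigwedgem{m}t_{i,j}(\xi)\neq e$), so the statement is otherwise valid for all admissible $n$ and $m$.
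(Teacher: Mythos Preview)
Your proposal is correct and follows precisely the approach the paper indicates: the paper itself gives no detailed argument, merely stating that the proposition ``is straightforward by the very definition of the [classical] Binet--Cauchy homomorphism.'' Your direct computation on basis vectors, together with the inversion-count verification of the sign identity $\sign(L,i)\sign(L,j)=\sign(L,i,j)$ and the commutativity check via the Chevalley formula, is exactly the routine verification the authors had in mind (and is consistent with their Remark after Proposition~\ref{ImageOfTransvFor2} noting that the factors commute).
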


It follows from the proposition that $\bigwedgem{m}t_{i,j}(\xi)\in \E^{\binom{n-2}{m-1}}(N,R)$, where by definition every element of the set $\E^M(N,R)$ is a product of $M$ or less elementary transvections.

\subsection{Elementary calculations technique}\label{levelgeneral}

In the case of general exterior power $m$, the calculations with elementary transvections are huge. This subsection is intended to impose theory upon these calculations.

In the next section we are interested in the exact description of the group $H\cap \E(N,R)$. The general strategy for such description is the following.
From the inclusion $\bigwedgem{m}\E(n,R) \leq H \textrm{ for any } t_{I,J}(\xi)\in H \textrm{ and } \bigwedgem{m}t_{i,j}(\zeta)\in\bigwedgem{m}\E(n,R)$
we see that $[t_{I,J}(\xi),  \bigwedgem{m}t_{i,j}(\zeta)]\in H$. Thereby if we commute the elementary transvection $t_{I,J}(\xi)\in H$ with elements from $\bigwedgem{m}\E(n,R)$ we obtain other transvections from the intersection $H\cap \E(N,R)$.

For example, from the formula $(m)$ it follows that
\begin{align*}
[t_{135, 124}(\xi), \bigwedgem{3}t_{7,6}(\zeta)] &= 1,\\
[t_{135, 124}(\xi), \bigwedgem{3}t_{4,6}(\zeta)] &= [t_{135, 124}(\xi), t_{124, 126}(\zeta)] = t_{135, 126}(\xi\zeta),\\
[t_{135, 124}(\xi), \bigwedgem{3}t_{4,3}(\zeta)] &= [t_{135, 124}(\xi), t_{124, 123}(\zeta)t_{145,135}(\zeta)] = t_{135, 123}(\xi\zeta)t_{145, 123}(\xi\zeta^2)t_{145, 124}(-\zeta\xi).
\end{align*}

Generalization of these calculations is the following result; proof goes by straightforward calculations.

\begin{proposition}\label{TypesOfComm}
Up to action of the symmetric group, there exist three types of commutator with a fixed transvection $t_{I,J}(\xi) \in \E\left(\binom{n}{m}, R\right)$:
\begin{enumerate}
\item $[t_{I,J}(\xi), \bigwedgem{m}t_{j,i}(\zeta)]=1$ if both $i\not\in I$ and $j \not \in J$ hold;
\item $[t_{I,J}(\xi), \bigwedgem{m}t_{j,i}(\zeta)] = t_{\tilde{I}, \tilde{J}}(\pm \zeta\xi)$ if either $i\in I$ or $j \in J$ holds. And then $\tilde{I} = I\backslash i \cup j$ or $\tilde{J} = J\backslash j \cup i$ respectively;
\item If both $i\in I$ and $j \in J$ hold; then we have more complicated equality:
$$[t_{I,J}(\xi), \bigwedgem{m}t_{j,i}(\zeta)] = t_{\tilde{I}, J}(\pm \zeta\xi)\cdot t_{I, \tilde{J}}(\pm \zeta\xi)\cdot t_{\tilde{I}, \tilde{J}}(\pm\zeta^2\xi).$$
\end{enumerate}
\end{proposition}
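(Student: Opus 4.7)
The plan is to compute the commutator directly from formula~$(m)$ of Proposition~\ref{ImageOfTransvForm} together with the Chevalley commutator formula. I would first record the key auxiliary fact that the factors in
$$\bigwedgem{m}t_{j,i}(\zeta)=\prod_{L\,\in\,\bigwedgem{m-1}[n\setminus\{i,j\}]} t_{L\cup j,\,L\cup i}(\sign(L,j,i)\,\zeta)$$
pairwise commute: for $L\neq L'$, a non-trivial Chevalley commutator between $t_{L\cup j,L\cup i}$ and $t_{L'\cup j,L'\cup i}$ would require either $L\cup i=L'\cup j$ or $L\cup j=L'\cup i$, both of which force $i=j$. Hence the product is essentially unordered for commutator purposes, and $[t_{I,J}(\xi),\,\bigwedgem{m}t_{j,i}(\zeta)]$ can be analysed by isolating the factors that interact non-trivially with $t_{I,J}(\xi)$ and applying $[a,bc]=[a,b]\cdot{}^b[a,c]$.

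A single factor $[t_{I,J}(\xi),\,t_{L\cup j,L\cup i}(\pm\zeta)]$ is non-trivial exactly when $J=L\cup j$ (which requires $j\in J$ and forces $L=J\setminus j$, giving $t_{I,\tilde J}(\pm\xi\zeta)$) or $I=L\cup i$ (which requires $i\in I$ and forces $L=I\setminus i$, giving $t_{\tilde I,J}(\pm\xi\zeta)$). In Case~1 ($i\notin I$, $j\notin J$) neither occurs, so every factor commutes with $t_{I,J}(\xi)$ and the full commutator vanishes. In Case~2 (exactly one of $i\in I$, $j\in J$ holds) exactly one $L$ contributes; all other factors commute with $t_{I,J}(\xi)$ and with the resulting transvection by the pairwise-commuting fact, so they drop out and leave the single transvection $t_{\tilde I,\tilde J}(\pm\xi\zeta)$ of the stated form, with $\tilde I$ or $\tilde J$ modified according to which of the two conditions holds.

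Case~3 is the main obstacle. Here both $L_1=I\setminus i$ and $L_2=J\setminus j$ contribute, yielding factors $b:=t_{\tilde I,I}(\pm\zeta)$ and $c:=t_{J,\tilde J}(\pm\zeta)$. Separating $b$ and $c$ from the remaining (commuting) factors and using that the rest commutes with $t_{I,J}(\xi)$, I obtain
$$[t_{I,J}(\xi),\,\bigwedgem{m}t_{j,i}(\zeta)]=[t_{I,J}(\xi),bc]=[t_{I,J}(\xi),b]\cdot{}^{b}[t_{I,J}(\xi),c].$$
Chevalley gives the two first-order contributions $[t_{I,J}(\xi),b]=t_{\tilde I,J}(\pm\xi\zeta)$ and $[t_{I,J}(\xi),c]=t_{I,\tilde J}(\pm\xi\zeta)$, and the conjugation identity ${}^{b}x=[b,x]\cdot x$ combined with the Chevalley commutator $[t_{\tilde I,I}(\pm\zeta),\,t_{I,\tilde J}(\pm\xi\zeta)]=t_{\tilde I,\tilde J}(\pm\zeta^2\xi)$ produces the quadratic correction. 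Collecting the three resulting transvections recovers exactly the three-factor product in the statement. The only remaining work is the bookkeeping of signs through $\sign(L,j,i)$ and the conjugations, which is routine and introduces no new ideas.
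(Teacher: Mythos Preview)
Your argument is correct and is precisely the ``straightforward calculation'' the paper alludes to without spelling out: expand $\bigwedgem{m}t_{j,i}(\zeta)$ via formula~$(m)$, use that its factors pairwise commute, and pick out the one or two factors that interact non-trivially with $t_{I,J}(\xi)$ via the Chevalley commutator formula. One small point of bookkeeping: in Case~2 the remaining factors commute with the \emph{resulting} transvection $t_{\tilde I,\tilde J}(\pm\xi\zeta)$ not by the pairwise-commuting fact for the factors of $\bigwedgem{m}t_{j,i}(\zeta)$ themselves, but by a separate (equally easy) index check --- this is harmless and your overall scheme is exactly right.
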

Note that last item is true whenever $I\setminus i\neq J\setminus j$, otherwise we obtain $[t_{I,J}(\xi),t_{J,I}(\pm\zeta)]$. This commutator cannot be presented more visualized than by the very definition.

The rule of the commutator calculations from the last proposition can be translated into the language of \textit{the weight diagrams}:\\[7mm]
\textbf{Weight diagrams tutorial.}
\begin{enumerate}
\item Let $G(A_{n-1},\blank)$ be a Chevalley--Demazure group scheme, and let $(I,J)\in\bigwedgem{m}[n]^2$ be a pair of different weights for the $m$-th exterior power of $G(A_{n-1},\blank)$. Consider any unipotent $x_\alpha(\xi)$ for a root $\alpha$ of the root system $A_{n-1}$, i.\,e., $x_\alpha(\xi)$ equals an elementary transvection $\bigwedgem{m}t_{i,j}(\xi) \in \bigwedgem{m}\E(n, R)$;
\item By $\Ar(\alpha)$ denote all paths on the weight diagram\footnote{Recall that we consider the representation with the highest weight $\varpi_{m}$.} of this representation corresponding to the root $\alpha$;
\item Then there exist three different items corresponding to the cases of Proposition~\ref{TypesOfComm}:
\begin{itemize}
\item sets of the initial and terminal vertices of paths from $\Ar(\alpha)$ do not contain the vertex $(I,J)$;
\item the vertex $(I,J)$ is initial or terminal for a one path from $\Ar(\alpha)$;
\item the vertex $(I,J)$ is simultaneously initial and terminal for some path\footnote{From root systems geometry any vertex can be initial or terminal for not more than one $\alpha$-path.} from $\Ar(\alpha)$.
\end{itemize}
\item Finally, consider the commutator of the transvection $t_{I,J}(\xi)$ and the element $\bigwedgem{m}t_{i,j}(\zeta)$. It equals a product of transvections. These transvections correspond to the paths from the previous item. The transvection arguments are monomials in $\xi$ and $\zeta$. Namely, in the second case the argument equals $\pm\xi\zeta$ and in the third case it equals $\pm\xi\zeta^2$.
\end{enumerate}

In Figure~\ref{Fig1}$(a)$ we present all three cases from item $(3)$ for $m=2$ and $\alpha = \alpha_{2}$:
\begin{itemize}
\item $(I,J) = (14,15)$, then $[t_{14, 15}(\xi), \bigwedgem{2}t_{2,3}(\zeta)] = 1$;
\item $(I,J) = (13,35)$, then $[t_{13, 35}(\xi), \bigwedgem{2}t_{2,3}(\zeta)] = t_{12, 35}(-\xi\zeta)$;
\item $(I,J) = (13,24)$, then $[t_{13, 24}(\xi), \bigwedgem{2}t_{2,3}(\zeta)] = t_{12, 24}(-\xi\zeta)t_{12,34}(\xi\zeta^2) t_{13,34}(\zeta\xi)$.
\end{itemize}

Similarly, for the case $m=3$ the elementary calculations can be seen directly from the Figure~\ref{Fig1}$(b)$.

\[
\xymatrix @+1.0pc {
{\overset{12}{\bullet}}\ar@{-}[r] \textbf{\ar@/^1pc/[r]^2}&{\overset{13}{\bullet}}\ar@{-}[r]\ar@{-}[d]&{\overset{14}{\bullet}}\ar@{-}[r]\ar@{-}[d]&{\overset{15}{\bullet}}\ar@{-}[d]\\
&{\overset{23}{\bullet}}\ar@{-}[r]&{\overset{24}{\bullet}}\ar@{-}[r]\ar@{-}[d] \textbf{\ar@/^1pc/[d]^2}&{\overset{25}{\bullet}}\ar@{-}[d] \textbf{\ar@/^1pc/[d]^2}\\
&&{\overset{34}{\bullet}}\ar@{-}[r]&{\overset{35}{\bullet}}\ar@{-}[d]\\
&&&{\overset{45}{\bullet}}\\
&&(a)}
\hspace{1cm}
\xymatrix @-2.0pc {
{\overset{123}{\bullet}}\ar@{-}[rrrr] &&&&{\overset{124}{\bullet}}\textbf{\ar@/^1pc/[rrrr]^4}\ar@{-}[rrrr]\ar@{-}[ddrrr]&&&&{\overset{125}{\bullet}}\ar@{-}[rrrr]\ar@{-}[ddrrr]&&&&{\overset{126}{\bullet}}\ar@{-}[ddrrr]\\
\\
&&&&&&&{\overset{134}{\bullet}}\ar@{-}[rrrr]\ar@{-}[ddd]\textbf{\ar@/^1pc/[rrrr]^4} &&&&{\overset{135}{\bullet}}\ar@{-}[rrrr]\ar@{-}[ddrrr]\ar@{-}[ddd]&&&&{\overset{136}{\bullet}}\ar@{-}[ddrrr]\ar@{-}[ddd]\\
\\
&&&&&&&&&&&&&&{\overset{145}{\bullet}}\ar@{-}[rrrr]\ar@{-}[ddd] &&&&{\overset{146}{\bullet}}\ar@{-}[ddrrr]\ar@{-}[ddd]\\
&&&&&&&{\overset{234}{\bullet}}\ar@{-}[rrrr]\textbf{\ar@/^1pc/[rrrr]^4} &&&&{\overset{235}{\bullet}}\ar@{-}[rrrr]\ar@{-}[ddrrr]&&&&{\overset{236}{\bullet}}\ar@{-}[ddrrr]\\
&&&&&&&&&&&&&&&&&&&&&{\overset{156}{\bullet}}\ar@{-}[ddd]\\
&&&&&&&&&&&&&&{\overset{245}{\bullet}}\ar@{-}[rrrr]\ar@{-}[ddd]\textbf{\ar@/^1pc/[ddd]^4} &&&&{\overset{246}{\bullet}}\ar@{-}[ddrrr]\ar@{-}[ddd]\textbf{\ar@/^1pc/[ddd]^4}\\
\\
&&&&&&&&&&&&&&&&&&&&&{\overset{256}{\bullet}}\ar@{-}[ddd]\textbf{\ar@/^1pc/[ddd]^4}\\
&&&&&&&&&&&&&&{\overset{345}{\bullet}}\ar@{-}[rrrr] &&&&{\overset{346}{\bullet}}\ar@{-}[ddrrr]\\
\\
&&&&&&&&&&&&&&&&&&&&&{\overset{356}{\bullet}}\ar@{-}[ddd]\\
\\
\\
&&&&&&&&&&&&&&&&&&&&&{\overset{456}{\bullet}}\\
&&&&&&&&&&&&&&(b)}
\]

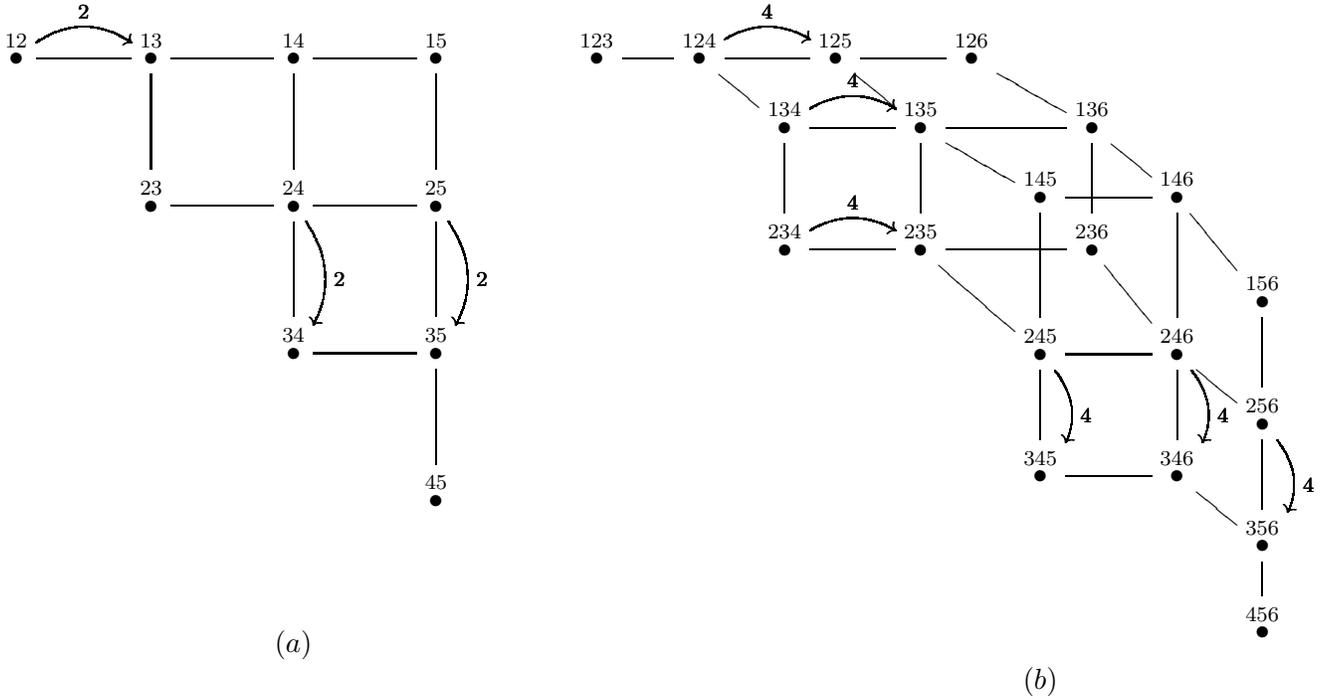
\captionof{figure}{Weight diagrams for $(a)$: $(A_4,\varpi_2)$, $\alpha = \alpha_{2}$ and $(b)$: $(A_5,\varpi_3)$, $\alpha = \alpha_{4}$}\label{Fig1}

\subsection{Level computation}\label{levelcomputation}
Generalize the notion of ideals $A_{I,J}$ for the $m$-th exterior power. $A_{I,J}:=\{\xi \in R \;|\; t_{I,J}(\xi) \in H \}$ for any different indices $I, J \in \bigwedgem{m}[n]$. Recall that the desired parametrization is given by an explicit juxtaposition for any overgroup $H$ its level, an ideal $A$ of the ring $R$. We compute this ideal $A$ in the present section.

As earlier, without loss of generality, we can assume that $n \geq 2m$. The first step toward the level description is the following observation.

\begin{proposition}\label{LevelsEqualGeneral}
If $|I \cap J|=|K\cap L|$, then sets $A_{I,J}$ and $A_{K,L}$ coincide. Actually, $A_{I,J}$ are ideals of $R$.
\end{proposition}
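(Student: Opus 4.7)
The plan is to combine two ingredients: a Weyl-group action inside $\bigwedgem{m}\E(n,R)$ to show that $A_{I,J}$ depends only on the height $h=|I\cap J|$, and a single type-$(2)$ commutator from Proposition~\ref{TypesOfComm} to upgrade each $A_{I,J}$ from an additive subgroup to a full ideal. The identities $t_{I,J}(\xi)t_{I,J}(\zeta)=t_{I,J}(\xi+\zeta)$ and $t_{I,J}(-\xi)=t_{I,J}(\xi)^{-1}\in H$ already make each $A_{I,J}$ an additive subgroup of $R$ closed under negation, so only the same-height equality and $R$-absorption remain.

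I first establish that $A_{I,J}$ depends only on $h$. For $1\le i\ne j\le n$ the Weyl element $w_{i,j}:=t_{i,j}(1)t_{j,i}(-1)t_{i,j}(1)$ lies in $\E(n,R)$, hence $\bigwedgem{m}w_{i,j}\in\bigwedgem{m}\E(n,R)\le H$. On the basis $\{e_K\}_{K\in\bigwedgem{m}[n]}$ this element acts as a signed permutation implementing the transposition $(i,j)$ on the underlying index set, so conjugation by it sends $t_{I,J}(\xi)\in H$ to $t_{(ij)I,\,(ij)J}(\pm\xi)\in H$; combined with negation-closure this gives $A_{I,J}=A_{(ij)I,(ij)J}$. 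Iterating over all transpositions yields the full $S_n$-equivariance $A_{I,J}=A_{\sigma(I),\sigma(J)}$. Any two pairs $(I,J)$ and $(K,L)$ of common height $h$ are $S_n$-conjugate, since the four subsets $I\cap J$, $I\setminus J$, $J\setminus I$, $[n]\setminus(I\cup J)$ have the same cardinalities as their $(K,L)$-counterparts, and any four bijections between them glue to a $\sigma\in S_n$ with $\sigma(I)=K$, $\sigma(J)=L$. Hence $A_{I,J}=A_{K,L}$.

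For $R$-absorption, fix $\xi\in A_{I,J}$ and $r\in R$. Pick $i\in I\setminus J$ (nonempty because $h<m$) and $j\in[n]\setminus(I\cup J)$ (nonempty once $2m-h<n$). By Proposition~\ref{TypesOfComm}$(2)$,
$$[t_{I,J}(\xi),\,\bigwedgem{m}t_{j,i}(r)]=t_{\tilde I,J}(\pm r\xi)\in H,\qquad \tilde I=(I\setminus i)\cup\{j\}.$$
Both the removed and the added indices lie outside $J$, so $|\tilde I\cap J|=h$; by the previous step $A_{\tilde I,J}=A_{I,J}$, and therefore $r\xi\in A_{I,J}$.

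The principal obstacle is the corner $h=0$ together with $n=2m$, where $I\cup J=[n]$ leaves no fresh index for a type-$(2)$ commutator. To handle it one must fall back on Proposition~\ref{TypesOfComm}$(3)$: a symmetrization of the commutators of $t_{I,J}(\xi)$ with $\bigwedgem{m}t_{j,i}(r)$ and with $\bigwedgem{m}t_{j,i}(-r)$ (for $i\in I$, $j\in J$) should cancel the two linear pieces and leave the quadratic piece $t_{\tilde I,\tilde J}(\pm 2r^{2}\xi)\in H$ at height $0$, after which $2\in R^{*}$ combined with the polarization identity $2rs=(r+s)^{2}-r^{2}-s^{2}$ promotes closure under $\xi\mapsto r^{2}\xi$ to the desired absorption $r\xi\in A_{I,J}$; alternatively one may simply impose $n>2m$, as done for the subsequent propositions.
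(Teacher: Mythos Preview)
Your argument is correct and takes a genuinely different route from the paper's own proof.

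The paper establishes the same-height equality $A_{I,J}=A_{K,L}$ entirely through the commutator calculus of Proposition~\ref{TypesOfComm}: first the type-(3) symmetrization trick (with $\zeta$ and $-\zeta$) to move within a fixed union $I\cup J$, then type-(2) commutators to change the union, and finally a combination of both types to change the intersection $I\cap J$ while keeping its cardinality fixed. For the ideal property the paper uses a double commutator $[t_{I,J}(\xi),\bigwedgem{m}t_{j,i}(\zeta),\bigwedgem{m}t_{i,j}(\pm1)]=t_{I,J}(\xi\zeta)$, which again needs a fresh index outside $I\cup J$.

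You instead exploit that the Weyl elements $\bigwedgem{m}w_{i,j}$ lie in $\bigwedgem{m}\E(n,R)\le H$ and act as signed permutations on the weight basis, giving $S_n$-equivariance of the family $\{A_{I,J}\}$ in one stroke; the same-height coincidence then drops out of the elementary observation that $S_n$ acts transitively on pairs of fixed height. This is cleaner and more conceptual than the paper's incremental commutator chase, and it makes the subsequent ideal step almost free: a single type-(2) commutator lands in some $A_{\tilde I,J}$ of the same height, which you already know equals $A_{I,J}$. What the paper's approach buys, on the other hand, is that it stays entirely within the commutator framework of Proposition~\ref{TypesOfComm} that drives the rest of \S\ref{levelgeneral}--\ref{levelcomputation}; your Weyl-group shortcut does not directly help with the later inclusions $A_k\subseteq A_{k+1}$, where genuine commutator manipulations are unavoidable. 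Your explicit treatment of the corner case $h=0$, $n=2m$ via the type-(3) symmetrization and polarization is also more careful than the paper, which glosses over the need for an index outside $I\cup J$ in its double-commutator formula.
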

But first, we prove a weaker statement.
\begin{lemma} \label{LevelsEqual0}
Let $I, J, K, L$ be different elements of the set $\bigwedgem{m}[n]$, such that $|I \cap J|=|K\cap L| = 0$. If $n \geq 2m$, then the sets $A_{I,J}$ and $A_{K,L}$ coincide.
\end{lemma}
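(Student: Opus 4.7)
The strategy is to transport a given transvection $t_{I,J}(\xi)\in H$ through the group $H$ by commuting it with elements of $\bigwedgem{m}\E(n,R)$, so as to produce a transvection $t_{K,L}(\xi)\in H$ attached to the target disjoint pair. Since every relevant commutator alters $I$ or $J$ by at most a single-index substitution, the argument reduces to two ingredients: (i) for any single-index swap $(I,J)\to(I',J')$ preserving disjointness, establish $A_{I,J}\subseteq A_{I',J'}$; (ii) observe that any two disjoint pairs of $m$-subsets of $[n]$ are connected by a chain of such swaps. Applying (i) symmetrically then yields the required equality $A_{I,J}=A_{K,L}$.

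Step (i) splits into two regimes. If $n>2m$ there is a free index $a\in[n]\setminus(I\cup J)$; taking any $i\in I$, the commutator $[t_{I,J}(\xi),\bigwedgem{m}t_{a,i}(\zeta)]$ falls into case~(2) of Proposition~\ref{TypesOfComm} and equals $t_{\tilde I,J}(\pm\zeta\xi)$ with $\tilde I=(I\setminus\{i\})\cup\{a\}$, and $\zeta=1$ gives $\xi\in A_{\tilde I,J}$ directly; a symmetric move swaps an index inside $J$. If $n=2m$ no free index is available and only case~(3) is non-trivial: for $i\in I$, $j\in J$ one has
$$[t_{I,J}(\xi),\bigwedgem{m}t_{j,i}(\zeta)]=t_{\tilde I,J}(\epsilon_{1}\zeta\xi)\cdot t_{I,\tilde J}(\epsilon_{2}\zeta\xi)\cdot t_{\tilde I,\tilde J}(\epsilon_{3}\zeta^{2}\xi),$$
where $\tilde I=(I\setminus\{i\})\cup\{j\}$, $\tilde J=(J\setminus\{j\})\cup\{i\}$ form a new disjoint pair. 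A direct Chevalley-commutator verification (using $\tilde I\cap\tilde J=\emptyset$ and $m\geq 2$) shows the three factors commute pairwise in $\GL_{N}(R)$. Multiplying the relation with $\zeta$ and with $-\zeta$ therefore cancels the two height-$1$ factors and leaves $t_{\tilde I,\tilde J}(2\epsilon_{3}\zeta^{2}\xi)\in H$ for every $\zeta\in R$. Combined with the automatic additive closure of $A_{\tilde I,\tilde J}$, the polarisation identity $4\xi\alpha\beta=2\xi(\alpha+\beta)^{2}-2\xi\alpha^{2}-2\xi\beta^{2}$ together with invertibility of $2$ (hence of $4$) in $R$ forces $\xi R\subseteq A_{\tilde I,\tilde J}$; in particular $\xi\in A_{\tilde I,\tilde J}$.

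Step (ii) is routine: when $n>2m$, iterating case~(2) swaps sends $(I,J)$ to any disjoint pair; when $n=2m$, iterating case~(3) swaps acts transitively on pairs of the form $(K,[n]\setminus K)$. The delicate point is the $n=2m$ case of step (i), where only case~(3) commutators are available and the product formula entangles the desired height-$0$ transvection (quadratic in $\zeta$) with two spurious height-$1$ factors (linear in $\zeta$). The $\zeta\leftrightarrow-\zeta$ averaging and polarisation trick are exactly what decouple them, and both steps genuinely use $2\in R^{*}$.
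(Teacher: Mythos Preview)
Your proof is correct and follows essentially the same route as the paper: both rely on the type-(2) and type-(3) commutator moves of Proposition~\ref{TypesOfComm}, and both use the $\zeta\leftrightarrow-\zeta$ averaging trick to isolate the height-$0$ factor from a type-(3) product. The differences are only organizational: the paper first handles fixed $I\cup J$ via type-(3) for all $n\geq 2m$ and then changes $I\cup J$ via type-(2), whereas you split by cases $n>2m$ (type-(2) swaps with a free index only) and $n=2m$ (type-(3) only). Your polarisation step is superfluous, since setting $\zeta=1$ in $t_{\tilde I,\tilde J}(2\epsilon_{3}\zeta^{2}\xi)\in H$ already gives $\xi\in A_{\tilde I,\tilde J}$ directly once $2\in R^{*}$.
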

\begin{proof}[Proof of the lemma] First of all, the sets $A_{I, J}$ coincide when the set $I \cup J$ is fixed. This fact can be proved by the third type commutation due to Proposition~\ref{TypesOfComm} with $\zeta$ and $-\zeta$. If $\xi \in A_{I,J}$ we get a transvection $t_{I,J}(\xi) \in H$. Then the following two products belong to $H$:
\begin{align*}
[t_{I,J}(\xi), \bigwedgem{m}t_{j,i}(\zeta)] &= t_{\tilde{I}, J}(\pm \zeta\xi)\cdot t_{I, \tilde{J}}(\pm \zeta\xi)\cdot t_{\tilde{I}, \tilde{J}}(\pm\zeta^2\xi)\\
[t_{I,J}(\xi), \bigwedgem{m}t_{j,i}(-\zeta)] &= t_{\tilde{I}, J}(\mp \zeta\xi)\cdot t_{I, \tilde{J}}(\mp \zeta\xi)\cdot t_{\tilde{I}, \tilde{J}}(\pm \zeta^2\xi).
\end{align*}
This implies that the product of two factors on the right-hand side $t_{\tilde{I}, \tilde{J}}(\pm 2\zeta^2\xi)$ belongs to $H$.

The reader can easily prove that by the second type commutation we can change the set $I\cup J$. For example, a set $I_{1}\cup J_{1} = \{1,2,3,4,5,6\}$ can be replaced by a set $I_{2} \cup J_{2} = \{1,2,3,4,5,7\}$ as follows
$$[t_{123,456}(\xi), \bigwedgem{3}t_{6,7}(\zeta)] = t_{123, 457}(\xi\zeta).$$
\end{proof}

\begin{proof}[Proof of Proposition~$\ref{LevelsEqualGeneral}$]
Arguing as above, we see that the sets $A_{I,J}$ and $A_{K,L}$ coincide in the case $I\cap J = K\cap L$, where $n_{1} = n - |I\cap J| \geq 2\cdot m - 2 \cdot |I\cap J| = 2 \cdot m_{1}$.

In the general case, we can prove the statement by both the second and the third types commutation. Let us give an example of this calculation with replacing the set $I \cap J = \{1,2\}$ by the set $\{1,5\}$

Let $t_{123,124}(\xi) \in H$. So we have $[t_{123,124}(\xi),  \bigwedgem{3}t_{2,5}(\zeta) ] = t_{123,145}(-\xi\zeta) \in H$. We commute this transvection with the element $ \bigwedgem{3}t_{5,2}(\zeta_1)$. Then the transvection $t_{135,124}(-\zeta_1^2\xi\zeta)$ belongs to $H$ as well as the product $t_{123,124}(\xi\zeta\zeta_1)\cdot t_{135, 145}(-\zeta_1\xi\zeta)\in H$. From the last inclusion we can see that $t_{135, 145}(-\zeta_1\xi\zeta)\in H$ and $I\cap J = \{1,5\}$.

To prove that all $A_{I,J}$ are ideals in $R$ it sufficient to commute any elementary transvection with exterior transvections with $\zeta$ and $1$:
$$t_{I,J}(\xi\zeta)=[t_{I,J}(\xi),\bigwedgem{m}t_{j,i}(\zeta),\bigwedgem{m}t_{i,j}(\pm 1)]\in H.$$
\end{proof}

Let $t_{I,J}(\xi)$ be any elementary transvection. Define \textit{the height} of $t_{I,J}(\xi)$ (more abstractly, of the  pair $(I,J)$) as a cardinality of a set $I\cap J$:
$$\height(t_{I,J}(\xi))=\height(I,J)=|I\cap J|.$$
Then Proposition~\ref{LevelsEqualGeneral} can be rephrased as follows.
Sets $A_{I,J}$ and $A_{K,L}$ coincide for the same heights: $A_{I,J} = A_{K,L} =A_{|I\cap J|}$. Suppose that height of $(I,J)$ is larger than height of $(K,L)$, then using Proposition~\ref{TypesOfComm}, we get $A_{I,J} \subseteq A_{K,L}$

Summarizing the above arguments, we have the height grading:
$$A_{0} \supseteq A_{1} \supseteq A_{2} \supseteq \dots \supseteq A_{m-2} \supseteq A_{m-1}.$$

The following result proves the coincidence of all sets $\{A_{k}\}_{k=0\dots m-1}$.
\begin{prop}
The sets $A_{k}$ coincide for $n \geq 3m$. More accurately, the inverse inclusion $A_k\subseteq A_{k+1}$ takes place if $n\geq 3m-2k$.
\end{prop}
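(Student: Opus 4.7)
Fix $\xi \in A_k$. By Proposition~\ref{LevelsEqualGeneral} we have $t_{I,J}(\xi) \in H$ for every pair $(I,J)$ with $|I \cap J| = k$, so I may choose $(I,J)$ with $|I \cup J| = 2m - k$; since $n \geq 3m - 2k$, at least $m - k$ indices of $[n]$ remain free outside $I \cup J$, and this is exactly the room needed for the parallel second construction below. The goal is to cook up inside $H$ a single elementary transvection of height $k+1$ whose coefficient is controlled by $\xi$; combined with the ideal property of $A_{k+1}$ (already in Proposition~\ref{LevelsEqualGeneral}) and $2 \in R^*$, this will deliver $\xi \in A_{k+1}$.

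The construction has three moves. First, pick $i \in I \setminus J$ and $j \in J \setminus I$ and apply the third-type commutator of Proposition~\ref{TypesOfComm}:
\[
[t_{I,J}(\xi),\, \bigwedgem{m} t_{j,i}(\zeta)] = t_{\tilde I, J}(\pm \zeta\xi) \cdot t_{I, \tilde J}(\pm \zeta\xi) \cdot t_{\tilde I, \tilde J}(\pm \zeta^2 \xi) \in H,
\]
where $\height(\tilde I, J) = \height(I, \tilde J) = k+1$ and $\height(\tilde I, \tilde J) = k$. A direct verification (possible for the generic $i,j$ chosen) shows that the three factors on the right mutually commute in $\GL_N(R)$, so combining the $\zeta$-version of the commutator with its $(-\zeta)$-version by multiplication and by inverse-products isolates both a harmless height-$k$ transvection $t_{\tilde I, \tilde J}(\pm 2\zeta^2 \xi) \in H$ and the pure height-$(k+1)$ product $Q := t_{\tilde I, J}(\pm 2\zeta\xi) \cdot t_{I, \tilde J}(\pm 2\zeta\xi) \in H$. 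Second, repeat this manoeuvre starting from a second height-$k$ pair $(I', J')$ chosen inside the $m - k$ free letters, with indices arranged to have the required overlap with $(\tilde I, J)$ and $(I, \tilde J)$; this produces an analogous element $Q' \in H$. Third, compute the cross commutator $[Q, Q']$ via the Chevalley formula: by design, three of the four pairwise Chevalley commutators between the constituent height-$(k+1)$ transvections of $Q$ and $Q'$ vanish, and the surviving one collapses $[Q, Q']$ to a single elementary transvection $t_{K, L}(\eta) \in H$ with $|K \cap L| = k + 1$.

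\textbf{Main obstacle.} Two interlinked difficulties appear. First, the index engineering in the second move: the pair $(I', J')$ and the choices $(i', j')$ must be orchestrated so that exactly one pairwise Chevalley commutator in $[Q, Q']$ is non-trivial and, moreover, produces a transvection of height exactly $k+1$ rather than dropping back down or yielding a non-isolatable product. The $(n, m, k) = (4, 2, 0)$ configuration in case~(5) of Lemma~\ref{IdealFor2}, where the second copy uses indices $\{4,5\}, \{1,6\}$ supplementing $\{1,2\}, \{3,4\}$ and requires $n \geq 6 = 3m - 2k$, is the prototype, and the bound $n \geq 3m - 2k$ is precisely the combinatorial threshold for this arrangement to fit. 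Second, the naive cross-commutator computation yields $\eta \in \xi^2 R$ rather than $\eta \in \xi R$, so an additional bookkeeping step --- exploiting the ideal property of $A_k$ and the already-known grading $A_k \supseteq A_{k+1}$ --- is needed to upgrade $\xi^2 R \subseteq A_{k+1}$ to $\xi \in A_{k+1}$.
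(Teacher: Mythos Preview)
Your plan is the paper's own argument. The paper calls it ``double third type commutation'': from $t_{I,J}(\xi)\in H$ with $\height(I,J)=k$, a single type-(3) commutator with $\bigwedgem{m}t_{j,i}(\zeta)$ yields the product $t_{\tilde I,J}(\pm\zeta\xi)\cdot t_{I,\tilde J}(\pm\zeta\xi)\in H$ of two height-$(k{+}1)$ transvections. (The paper strips the third factor $t_{\tilde I,\tilde J}(\pm\zeta^2\xi)$ directly, observing that its argument already lies in the ideal $A_k$; this is a shade cleaner than your $\pm\zeta$ manoeuvre and avoids the extra factor of~$2$.) The same is done with a second height-$k$ pair $(I_1,J_1)$, placed so that commuting the two resulting products collapses to a single height-$(k{+}1)$ transvection. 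Your reading of the index combinatorics and of the threshold $n\ge 3m-2k$ matches the paper's worked $m=4$ examples exactly.

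Where you go beyond the paper is in explicitly flagging the $\xi^2$ problem --- and here your proposed fix does not work. The cross-commutator yields $t_{K,L}(\pm\xi^2\zeta\zeta_1)\in H$, giving only $A_k^2\subseteq A_{k+1}$; running the two halves with independent $\xi,\xi'\in A_k$ still gives no more than $A_k\cdot A_k\subseteq A_{k+1}$. The ingredients you invoke (that $A_k$ is an ideal and that $A_{k+1}\subseteq A_k$) cannot by themselves upgrade this to $A_k\subseteq A_{k+1}$: consider $R=\mathbb{Z}[1/2]$ with $A_k=3R$ and $A_{k+1}=9R$. The paper's own worked examples all terminate with an argument $\xi^2\zeta\zeta_1$ followed by the bare assertion ``As a result, $A_k\subseteq A_{k+1}$'', so on this point your sketch reproduces the paper faithfully --- including a step that neither you nor the paper actually justifies.
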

\begin{proof}
The statement can be proved by the double third type commutation as follows. Let $\xi\in A_k$, i.\,e., a transvection 
$t_{I,J}(\xi)\in H$ for $\height(t_{I,J}(\xi))=k$. By the third type commutation with a transvection $\bigwedgem{m}t_{j,i}(\zeta)$ we have $t_{\tilde{I}, J}(\pm \zeta\xi)\cdot t_{I, \tilde{J}}(\pm \zeta\xi)\in H$. Let us consider the analogous commutator with a specifically chosen transvections $t_{I_1,J_1}(\xi)\in H$ and $\bigwedgem{m}t_{j_1,i_1}(\zeta_1)$. The final step is to commute the last products.

The choice of transvections goes in the way that the final commutator (initially of the form $[ab,cd]$) has a form as simple as possible, e.\,g., equals an elementary transvection.

Let us give a particular example of such calculations for the case $m=4$. This calculation could be easily generalized. The first three steps below correspond to the inclusions $A_{0} \subseteq A_{1}$, $A_{1} \subseteq A_{2}$ and $A_{2} \subseteq A_{3}$ respectively. We emphasize that the idea of the proof of all three steps is \textbf{completely} identical. The difference has to do only with a choice of the appropriate indices. This choice is possible in a general case due to the condition $n \geq 3m$. We replace the numbers $10$, $11$, $12$ with the letters $\alpha$, $\beta$, $\gamma$ in the interest of readability.

\begin{enumerate}
\item Take any $\xi\in A_0$. Let us consider the commutator $[t_{1234,5678}(\xi), \bigwedgem{4}t_{8,4}(\zeta)]$. Then $$t_{1234,4567}(-\xi\zeta)\cdot t_{1238,5678}(-\zeta\xi)\in H.$$
Similarly,
$$t_{49\alpha\beta,1234}(\xi\zeta_1)\cdot t_{9\alpha\beta\gamma,123\gamma}(\zeta_1\xi)\in H$$
for the commutator $[t_{49\alpha\beta,123\gamma}(\xi), \bigwedgem{4}t_{\gamma,4}(\zeta_1)]$. It remains to commute the last two products. Therefore the transvection $t_{49\alpha\beta,4567}(\xi^2\zeta_1\zeta)$ belongs to $H$. As a result, $A_0\subseteq A_1$.
\item Take any $\xi\in A_1$. Let us consider the commutator $[t_{1234,1567}(\xi), \bigwedgem{4}t_{7,4}(\zeta)]$. Then $$t_{1234,1456}(\xi\zeta)\cdot t_{1237,1567}(-\zeta\xi)\in H.$$
and
$$t_{1489,1234}(\xi\zeta_1)\cdot t_{189\alpha,123\alpha}(-\zeta_1\xi)\in H$$
for the commutator $[t_{1489,123\alpha}(\xi), \bigwedgem{4}t_{\alpha,4}(\zeta_1)]$. Again, it remains to commute the last two products. Therefore the transvection $t_{1489,1456}(-\xi^2\zeta_1\zeta)$ lies in $H$. So, $A_1\subseteq A_2$.
\item Take any $\xi\in A_2$ and consider the commutator $[t_{1234,1256}(\xi), \bigwedgem{4}t_{6,4}(\zeta)]$. Then $$t_{1234,1245}(-\xi\zeta)\cdot t_{1236,1256}(-\zeta\xi)\in H.$$
and
$$t_{1248,1234}(\xi\zeta_1)\cdot t_{1278,1237}(-\zeta_1\xi)\in H$$
for the commutator $[t_{1248,1237}(\xi), \bigwedgem{4}t_{7,4}(\zeta_1)]$. After commuting the last two products we see that $t_{1248,1245}(\xi^2\zeta_1\zeta)\in H$. Then $A_2\subseteq A_3$.
\end{enumerate}
\end{proof}

As always, the set $A = A_{I,J}$ is called \textit{the level} of the overgroup $H$. For level computation we need an alternative description of the relative elementary group.

\begin{lemma}\label{AlterRelatForm}
Let $n\geq 3m$. For any ideal $A\trianglelefteq R$, we have
$$\E(N,A)^{\bigwedgem{m}\E(n,R)}=\E(N,R,A),$$
where by definition $\E(N,R,A)=\E(N,A)^{\E(N,R)}$.
\end{lemma}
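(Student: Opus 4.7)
The inclusion $\E(N,A)^{\bigwedgem{m}\E(n,R)}\leq\E(N,R,A)$ is immediate from $\bigwedgem{m}\E(n,R)\leq\E(N,R)$. For the reverse, the Vaserstein--Suslin generating set reduces the problem to showing that every
$$z_{I,J}(\xi,\zeta)=t_{J,I}(\zeta)\,t_{I,J}(\xi)\,t_{J,I}(-\zeta),\qquad \xi\in A,\ \zeta\in R,\ I\neq J\in\bigwedgem{m}[n],$$
lies in $F:=\E(N,A)^{\bigwedgem{m}\E(n,R)}$. Since $t_{I,J}(\xi)\in\E(N,A)\leq F$ already, the content is really about conjugation by $t_{J,I}(\zeta)$, and I would split the argument by the height $h=|I\cap J|$.

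The clean case is $h=m-1$, and it is the central step. Writing $I=L\cup\{s\}$ and $J=L\cup\{t\}$ with $|L|=m-1$, $s,t\notin L$, Proposition~\ref{ImageOfTransvForm} tells us that $t_{J,I}(\zeta)$ is (up to a sign) exactly the factor indexed by $L$ of $\bigwedgem{m}t_{t,s}(\zeta)$. A routine Chevalley-commutator check — the same one that makes all factors of a single exterior transvection mutually commute — additionally shows that every other factor $t_{L'\cup t,\,L'\cup s}(\pm\zeta)$ with $L'\neq L$ commutes with $t_{I,J}(\xi)$ as well, because neither $L'\cup s=L\cup t$ nor $L'\cup t=L\cup s$ can hold when $s,t\notin L\cup L'$. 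Consequently conjugation by $t_{J,I}(\zeta)$ agrees on $t_{I,J}(\xi)$ with conjugation by the full exterior transvection, so
$$z_{I,J}(\xi,\zeta)={}^{\bigwedgem{m}t_{t,s}(\zeta)}t_{I,J}(\xi)\in F.$$

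For $h<m-1$, I would mimic the second bullet of the proof of Lemma~\ref{AlterRelatFor2}, running an induction on $m-1-h$. The Chevalley commutator formula yields $t_{I,J}(\xi)=[t_{I,K}(\xi),t_{K,J}(1)]$ for any admissible intermediate weight $K$, and the hypothesis $n\geq 3m$ leaves enough free indices to choose $K$ with $|I\cap K|=m-1$ and with $\height(K,J)>h$ — concretely by taking $K=(I\setminus\{i\})\cup\{j\}$ for suitable $i\in I\setminus J$ and $j\in J\setminus I$. After conjugating by $t_{J,I}(\zeta)$ and expanding via the standard identity
$$[ab,cd]={}^a[b,c]\cdot{}^{ac}[b,d]\cdot[a,c]\cdot{}^c[a,d],$$
each resulting factor either lies in $\E(N,A)\leq F$, is accounted for by the clean case, or is an element of the form $z_{I',J'}(\xi',\zeta')$ with $\height(I',J')>h$ that lies in $F$ by the inductive hypothesis.

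The main technical obstacle is the non-Chevalley commutator $[t_{P,Q}(\alpha),t_{Q,P}(\beta)]$, which is not a single transvection and inevitably shows up in the above expansion (as it does in the term ${}^c[b,d]$ of the $m=2$ proof). Following the same manoeuvre as in Lemma~\ref{AlterRelatFor2} — which, as the remark there notes, is virtually verbatim the symplectic and orthogonal computations — one rewrites it as a product of elementary transvections of level $A$ times a conjugate of the form ${}^{\bigwedgem{m}t_{\star,\star}(\cdot)}[\,t_{\bullet,\bullet}(\cdot),\bigwedgem{m}t_{\star,\star}(\cdot)]$, all pieces of which plainly belong to $F$. The bound $n\geq 3m$ is precisely what lets all the required index choices be made simultaneously, in complete parallel with its role in Proposition~\ref{LevelsEqualGeneral}.
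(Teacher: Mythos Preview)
Your proposal is essentially the paper's own proof: Vaserstein--Suslin reduction to the generators $z_{I,J}(\xi,\zeta)$, the height-$(m-1)$ base case handled by recognising $t_{J,I}(\zeta)$ as a factor of an exterior transvection whose remaining factors commute with $t_{I,J}(\xi)$, and the downward induction via the $[ab,cd]$ expansion with the intermediate weight $V=(I\setminus\{i_q\})\cup\{j_q\}$. One harmless slip in the base case: the relevant non-commuting conditions are $L'\cup s=L\cup s$ or $L'\cup t=L\cup t$ (each forcing $L'=L$), not $L'\cup s=L\cup t$ or $L'\cup t=L\cup s$ as you wrote --- your conclusion is unaffected.
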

\begin{proof}
Clearly, the left-hand side is contained in the right-hand side. The proof of the inverse inclusion goes by induction on a height of $(I,J)$. By Vaserstein--Suslin's lemma~\cite{VasSusSerre}, it suffices to check that for any $\xi \in A$, $\zeta \in R$, the
matrix $z_{I,J}(\xi,\zeta)$ belongs to $F:=\E(N,A)^{\bigwedgem{m}\E(n,R)}$.

In the base case $|I\cap J|=m-1$, the inclusion is obvious:
$$z_{I,J}(\xi,\zeta)\cdot t_{I,J}(-\xi)=[t_{J,I}(\zeta),t_{I,J}(\xi)]=\left[\bigwedgem{m}t_{j_1,i_1}(\zeta),t_{I,J}(\xi) \right]\in F.$$

Now, let us consider the general case $|I\cap J|=p$, i.\,e., $I=k_1\ldots k_{p}i_1\ldots i_{q}$ and $J=k_1\ldots k_pj_1\ldots j_q$. For the following calculations we need two more sets $V:=k_1\ldots k_pi_1\ldots i_{q-1}j_q$ and $W:=k_1\ldots k_pj_1\ldots j_{q-1}i_q$.\\
Firstly, we express $t_{I,J}(\xi)$ as a commutator of elementary transvections,
$$z_{I,J}(\xi,\zeta)=^{t_{J,I}(\zeta)}t_{I,J}(\xi)=^{t_{J,I}(\zeta)}[t_{I,V}(\xi),t_{V,J}(1)].$$
Conjugating the arguments of the commutator by $t_{J,I}(\zeta)$, we get
$$[t_{J,V}(\zeta\xi)t_{I,V}(\xi),t_{V,I}(-\zeta)t_{V,J}(1)]=:[ab,cd].$$
Next, we decompose the right-hand side with the help of the formula 
$$[ab,cd] ={}^a[b,c]\cdot {}^{ac}[b,d]\cdot[a,c]\cdot {}^c[a,d],$$
and observe that the exponent $a$ belongs to $\E(N,A)$ so can be ignored. Now a direct calculation, based upon the Chevalley commutator formula, shows that
\begin{align*}
[b,c]&=[t_{I,V}(\xi),t_{V,I}(-\zeta)]\in F \textrm{ (by the induction step for a height } m-1);\\
^c[b,d]&=^{t_{V,I}(-\zeta)}[t_{I,V}(\xi),t_{V,J}(1)]=t_{V,W}(\xi\zeta^2)t_{I,W}(-\xi\zeta)\cdot\; ^{\bigwedgem{m}t_{j_q,i_q}(-\zeta)}t_{I,J}(\xi);\\
[a,c]&=[t_{J,V}(\zeta\xi),t_{V,I}(-\zeta)]=t_{J,I}(-\zeta^2\xi);\\
^c[a,d]&=^{t_{V,I}(-\zeta)}[t_{J,V}(\zeta\xi),t_{V,J}(1)]=\\
=t_{J,W}(-\xi\zeta^2(1+\xi\zeta))&t_{V,W}(-\xi\zeta^2)\cdot\;^{\bigwedgem{m}t_{j_q,i_q}(-\zeta)}t_{J,V}(\xi\zeta)\cdot\;^{\bigwedgem{m}t_{j_q,i_q}(-\zeta)}z_{J,V}(-\zeta\xi,1)\in F,\\
\hspace{2cm}&\hspace{4cm}\textrm{ (by the induction step for a height }p+1)
\end{align*}
where all factors on the right-hand side belong to $F$.
\end{proof}

\begin{remark}
Since we do not use the coincidental elements of $I$ and $J$, we also can prove this Lemma by induction on  $|I\backslash J| = |J\backslash I| = 1/2 \cdot |I \triangle J|$. Then we can assume that $m$ is an arbitrarily large number (mentally, $m =\infty$).
\end{remark}

\begin{corollary}\label{CorolOfL6}
Let $A$ be an arbitrary ideal of the ring $R$; then
$$\bigwedgem{m}\E(n,R)\cdot\E(N,R,A)=\bigwedgem{m}\E(n,R)\cdot\E(N,A).$$
\end{corollary}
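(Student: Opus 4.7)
The $\supseteq$ inclusion is immediate from $\E(N,A)\subseteq\E(N,R,A)$, so only the reverse inclusion requires argument. My plan is to show that both products coincide, as subgroups of $\GL_N(R)$, with $\langle \bigwedgem{m}\E(n,R),\E(N,A)\rangle$.

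First I would verify that the left-hand side is a bona fide subgroup: by Suslin's theorem, $\E(N,R,A)\trianglelefteq \GL_N(R)$, so $\bigwedgem{m}\E(n,R)$ normalizes it, and the set-product of a subgroup with a subgroup it normalizes is itself a subgroup. Since this subgroup clearly contains both $\bigwedgem{m}\E(n,R)$ and $\E(N,A)\subseteq\E(N,R,A)$, it contains $\langle \bigwedgem{m}\E(n,R),\E(N,A)\rangle$ as well.

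For the reverse containment, the decisive input is Lemma~\ref{AlterRelatForm}, which identifies $\E(N,R,A)=\E(N,A)^{\bigwedgem{m}\E(n,R)}$. By definition of the normal closure, every element of $\E(N,R,A)$ is a finite product of $\bigwedgem{m}\E(n,R)$-conjugates of elements of $\E(N,A)$, and each such conjugate lies in $\langle \bigwedgem{m}\E(n,R),\E(N,A)\rangle$. Hence $\E(N,R,A)\leq\langle \bigwedgem{m}\E(n,R),\E(N,A)\rangle$, and therefore $\bigwedgem{m}\E(n,R)\cdot\E(N,R,A)\leq\langle \bigwedgem{m}\E(n,R),\E(N,A)\rangle$. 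No serious obstacle arises; the whole weight of the argument is carried by Lemma~\ref{AlterRelatForm}, and the corollary is essentially just a repackaging of it. The only subtlety is interpretive: the notation $\bigwedgem{m}\E(n,R)\cdot\E(N,A)$ must refer to the subgroup generated by the two factors rather than to the literal set-product, since by the exceptional sub-case of Proposition~\ref{TypesOfComm} the group $\bigwedgem{m}\E(n,R)$ does \emph{not} normalize $\E(N,A)$, so the set-product alone need not be closed under multiplication.
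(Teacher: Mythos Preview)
Your argument is correct and is exactly the intended one: the paper offers no separate proof of this corollary because it is an immediate repackaging of Lemma~\ref{AlterRelatForm}, precisely as you explain (and as the paper confirms in the remark following Theorem~\ref{LevelFor2}, where it says the product is ``generated as a subgroup'' by the two families of transvections). Your interpretive caveat about the right-hand side is also on point and worth recording: since conjugating a height-$(m-1)$ transvection $t_{I,J}(\xi)$ by the appropriate $\bigwedgem{m}t_{j,i}(\zeta)$ produces $z_{I,J}(\xi,\pm\zeta)\notin\E(N,A)$ in general, the literal set product $\bigwedgem{m}\E(n,R)\cdot\E(N,A)$ need not be closed, and the equality must be read as an equality of generated subgroups.
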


Summarizing the above two lemmas, we get the following crucial result.
\begin{thm}[Level Computation]
Let $n\geq 3m$, let $H$ be a subgroup in $\GL_{N}(R)$ containing $\bigwedgem{m}\E(n,R)$. Then there
exists a unique largest ideal $A\trianglelefteq R$ such that
$$\bigwedgem{m}\E(n,R)\cdot\E(N,R,A)\leq H.$$
Namely, if $t_{I,J}(\xi)\in H$ for some $I$ и $J$, then $\xi\in A$.
\end{thm}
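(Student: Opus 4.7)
The plan is to assemble the theorem from the ingredients already prepared in this section, so that the final argument is essentially a bookkeeping step rather than a new calculation. Define, as in Subsection~\ref{levelcomputation}, $A_{I,J}=\{\xi\in R\mid t_{I,J}(\xi)\in H\}$ for every pair of distinct weights $I,J\in\bigwedgem{m}[n]$, and set $A_k:=A_{I,J}$ for any pair with $\height(I,J)=k$; this is well-defined by Proposition~\ref{LevelsEqualGeneral}, which simultaneously tells us each $A_k$ is a (two-sided) ideal of $R$.

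Next I would invoke the height grading $A_0\supseteq A_1\supseteq\dots\supseteq A_{m-1}$ together with the reverse inclusions $A_k\subseteq A_{k+1}$ established just above (under the hypothesis $n\geq 3m-2k$, and therefore in particular under the blanket assumption $n\geq 3m$). Combining the two chains gives $A_0=A_1=\dots=A_{m-1}$, and I would call this common ideal $A$. By construction, every elementary transvection $t_{I,J}(\xi)$ with $\xi\in A$ lies in $H$, so $\E(N,A)\leq H$.

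To upgrade this to the full relative elementary subgroup I would apply Lemma~\ref{AlterRelatForm}, which identifies
$$\E(N,R,A)=\E(N,A)^{\bigwedgem{m}\E(n,R)}.$$
Because $\bigwedgem{m}\E(n,R)\leq H$ by assumption and $\E(N,A)\leq H$ by the previous paragraph, the whole normal closure on the right is contained in $H$. This gives the main inclusion
$$\bigwedgem{m}\E(n,R)\cdot\E(N,R,A)\leq H.$$
For uniqueness and maximality, suppose $A'\trianglelefteq R$ is any ideal satisfying $\bigwedgem{m}\E(n,R)\cdot\E(N,R,A')\leq H$. Then for every $\xi\in A'$ and every pair $(I,J)$ with $\height(I,J)=m-1$, the generator $t_{I,J}(\xi)$ lies in $\E(N,R,A')\leq H$, so $\xi\in A_{m-1}=A$; hence $A'\subseteq A$, which gives both uniqueness of the largest such ideal and the second clause of the theorem.

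There is no genuine obstacle left at this point: the level-coincidence computations (Proposition~\ref{LevelsEqualGeneral} and the $A_k\subseteq A_{k+1}$ inclusions) and the alternative description of $\E(N,R,A)$ (Lemma~\ref{AlterRelatForm}) already absorb all the Chevalley-commutator work, including the delicate use of the bound $n\geq 3m$ that provides enough free indices for the double third-type commutation. The only conceptual point worth flagging explicitly in the write-up is that the hypothesis $n\geq 3m$ enters twice --- once to collapse all $A_k$ to a single ideal, and once through Lemma~\ref{AlterRelatForm} --- so the theorem cannot be sharpened without revisiting those two ingredients.
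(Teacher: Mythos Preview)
Your proposal is correct and follows exactly the approach the paper takes: the paper itself states the theorem immediately after Lemma~\ref{AlterRelatForm} and Corollary~\ref{CorolOfL6} with only the phrase ``Summarizing the above two lemmas, we get the following crucial result,'' so your write-up is in fact a faithful unpacking of that summary. Your explicit maximality argument (taking a pair of height $m-1$) is a clean way to justify the ``Namely'' clause that the paper leaves implicit.
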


\subsection{Normalizer of \texorpdfstring{$\E\bigwedgem{m}\E(n,R,A)$}{TEXT}}\label{normgeneral}

In this section we describe the normalizer of the bottom constraint for an overgroup $H$.
\begin{lemma}\label{PerfectForm}
Let $n\geq 3m$. The group $\E\bigwedgem{m}\E(n,R,A) := \bigwedgem{m}\E(n,R)\cdot\E(N,R,A)$ is perfect for any ideal $A\trianglelefteq R$.
\end{lemma}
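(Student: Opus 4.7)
The plan is to mimic the proof of Lemma~\ref{PerfectFor2} almost verbatim, reducing the perfectness claim to the statement that each natural generator of $\E\bigwedgem{m}\E(n,R,A)$ lies in its own derived subgroup $F=[\E\bigwedgem{m}\E(n,R,A),\E\bigwedgem{m}\E(n,R,A)]$. By Corollary~\ref{CorolOfL6}, this group is generated by the exterior transvections $\bigwedgem{m}t_{i,j}(\zeta)$ with $\zeta\in R$ together with the elementary transvections $t_{I,J}(\xi)$ with $\xi\in A$, so these are the two families I would need to handle.

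For the first family, Cauchy--Binet functoriality does all the work: since $n\geq 3$, one can pick $h\in[n]\setminus\{i,j\}$ and use $t_{i,j}(\zeta)=[t_{i,h}(\zeta),t_{h,j}(1)]$ inside $\E(n,R)$; applying $\bigwedgem{m}$ expresses
$$\bigwedgem{m}t_{i,j}(\zeta)=\left[\bigwedgem{m}t_{i,h}(\zeta),\bigwedgem{m}t_{h,j}(1)\right]$$
as a commutator of two elements of $\bigwedgem{m}\E(n,R)\leq\E\bigwedgem{m}\E(n,R,A)$, hence an element of $F$.

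For the second family the plan is to realize $t_{I,J}(\xi)$ as the single transvection output of a type-2 commutator from Proposition~\ref{TypesOfComm}. Since $I\neq J$, I would fix some $j\in J\setminus I$, and then choose an auxiliary index $k\in[n]\setminus(I\cup J)$; such $k$ exists because $|I\cup J|\leq 2m<n$ under the hypothesis $n\geq 3m$. Setting $K=(J\setminus\{j\})\cup\{k\}\in\bigwedgem{m}[n]$, the indices satisfy $j\notin I$ and $k\in K$ while $k\notin I$, so the conditions of the second (type-2) case of Proposition~\ref{TypesOfComm} are met and the commutator collapses to a single transvection:
$$t_{I,J}(\pm\xi)=\left[t_{I,K}(\xi),\bigwedgem{m}t_{k,j}(1)\right].$$
Here $t_{I,K}(\xi)\in\E(N,A)$ and $\bigwedgem{m}t_{k,j}(1)\in\bigwedgem{m}\E(n,R)$ both lie in $\E\bigwedgem{m}\E(n,R,A)$, so the right-hand side lies in $F$; as $t_{I,J}(-\xi)=t_{I,J}(\xi)^{-1}$, this gives $t_{I,J}(\xi)\in F$.

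The only place the hypothesis $n\geq 3m$ actually enters is in securing the auxiliary index $k\notin I\cup J$, so I expect no substantial obstacle beyond the combinatorial bookkeeping for selecting $K$ and verifying the type-2 hypothesis of Proposition~\ref{TypesOfComm}. Once $K$ is chosen as above, the proof of Lemma~\ref{PerfectFor2} transfers directly, completing the argument.
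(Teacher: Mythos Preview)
Your proof is correct and follows essentially the same strategy as the paper: Cauchy--Binet for the exterior transvections, and a single type-2 commutator from Proposition~\ref{TypesOfComm} to express each $t_{I,J}(\xi)$ as $[t_{I,\bullet}(\xi),\bigwedgem{m}t_{\bullet,\bullet}(\pm 1)]$. The only cosmetic difference is that the paper draws the auxiliary index from $I\setminus J$ rather than from $[n]\setminus(I\cup J)$ --- it sets $V=(J\setminus\{j_q\})\cup\{i_q\}$ and writes $t_{I,J}(\xi)=[t_{I,V}(\xi),\bigwedgem{m}t_{i_q,j_q}(\pm 1)]$ --- so in fact the hypothesis $n\geq 3m$ never enters their argument for this lemma at all.
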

\begin{proof}
As in Section~\ref{square}, it suffices to verify that all generators of the group $\bigwedgem{m}\E(n,R)\cdot\E(N,R,A)$ lie in its commutator subgroup, which will be denoted by $F$. The proof goes in two steps.
\begin{itemize}
\item For the transvections $\bigwedgem{m}t_{i,j}(\zeta)$ this follows from the Cauchy--Binet homomorphism:
$$\bigwedgem{m}t_{i,j}(\zeta)=\bigwedgem{m}([t_{i,h}(\zeta),t_{h,j}(1)])=\left[\bigwedgem{m}t_{i,h}(\zeta),\bigwedgem{m}t_{h,j}(1)\right].$$
\item For the linear transvections $t_{I,J}(\xi)$ this can be done as follows. Suppose that $I\cap J=K = k_1\ldots k_p$, where $0\leq p\leq m-1$, i.\,e., $I=k_1\ldots k_pi_1\ldots i_q$ and $J=k_1\ldots k_pj_1\ldots j_q$. As in Lemma~\ref{AlterRelatForm}, we define a set $V=k_1\ldots k_{p}j_1\ldots j_{q-1}i_q$. And then 
$$t_{I,J}(\xi)=[t_{I,V}(\xi),t_{V,J}(1)]=\left[t_{I,V}(\xi), \bigwedgem{m}t_{i_q,j_q}(\pm 1) \right],$$
so we get the required.
\end{itemize}
\end{proof}

Let, as above, $A\trianglelefteq R$, and let $R/A$ be the factor-ring of $R$ modulo $A$. Denote by $\rho_A: R\longrightarrow R/A$ the canonical projection sending $\lambda\in R$ to $\bar{\lambda}=\lambda+A\in R/I$. Applying the projection to all entries of a matrix, we get the reduction homomorphism 
$$\begin{array}{rcl}
\rho_{A}:\GL_{n}(R)&\longrightarrow& \GL_{n}(R/A)\\
a &\mapsto& \overline{a}=(\overline{a}_{i,j})
\end{array}$$
The kernel of the homomorphism $\rho_{A}$, $\GL_{n}(R,A)$, is called \textit{the principal congruence-subgroup in $\GL_{n}(R)$ of the level $A$}.

Now, let $\CC(n,R)$ be the center of the group $\GL_{n}(R)$, consisting of the scalar matrices
$\lambda e, \lambda \in R^{*}$. The full preimage of the center of $\GL_{n}(R/A)$, denoted by $\CC(n,R,A)$, is called the full congruence-subgroup of the level $A$. The group $\CC(n,R,A)$ consists of all matrices congruent to a scalar matrix modulo $A$. We further concentrate on a study of the full preimage of the group $\bigwedgem{m}\GL_{n}(R/A)$:
$$\CC\bigwedgem{m}\GL_{n}(R,A)=\rho_{A}^{-1}\left(\bigwedgem{m}\GL_{n}(R/A)\right).$$
A key point in reduction modulo an ideal is the following standard commutator formula, proved by Leonid Vaserstein~\cite{VasersteinGLn} and Zenon Borevich and Nikolai Vavilov~\cite{BV84}.
$$[\E(n,R),\CC(n,R,A)]=\E(n,R,A) \textrm{ for commutative ring }R \textrm{ and } n\geq 3.$$

Finally, we are ready to state the \textit{level reduction} result.
\begin{thm}
Let $n\geq 3m$. For any ideal $A\trianglelefteq R$ we have
$$N_{\GL_{N}(R)}(\E\bigwedgem{m}\E(n,R,A))=\CC\bigwedgem{m}\GL_{n}(R,A).$$
\end{thm}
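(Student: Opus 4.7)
My plan is to prove the two inclusions separately, exploiting the reduction homomorphism $\rho_A : \GL_N(R) \to \GL_N(R/A)$ and the previous Theorem~\ref{normalizer} applied over the quotient ring $R/A$.

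For the inclusion $\subseteq$, observe that $\rho_A$ sends $\E\bigwedgem{m}\E(n,R,A) = \bigwedgem{m}\E(n,R) \cdot \E(N,R,A)$ onto $\bigwedgem{m}\E(n,R/A)$, since $\E(N,R,A) \subseteq \ker(\rho_A)$ and exterior powers commute with reduction. Any $g \in N_{\GL_N(R)}(\E\bigwedgem{m}\E(n,R,A))$ then has reduction normalizing $\bigwedgem{m}\E(n,R/A)$ inside $\GL_N(R/A)$, and Theorem~\ref{normalizer} applied over $R/A$ identifies this normalizer with $\bigwedgem{m}\GL_n(R/A)$. Hence $g \in \rho_A^{-1}(\bigwedgem{m}\GL_n(R/A)) = \CC\bigwedgem{m}\GL_n(R,A)$.

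For the reverse inclusion, fix $g \in \CC\bigwedgem{m}\GL_n(R,A)$ and check that conjugation by $g$ preserves each of the two families of generators of $\E\bigwedgem{m}\E(n,R,A)$ provided by Corollary~\ref{CorolOfL6}. The relative generators $t_{I,J}(\xi)$ with $\xi \in A$ are immediate: Suslin's theorem (with $N = \binom{n}{m} \geq 3$) gives $\E(N,R,A) \trianglelefteq \GL_N(R)$, so $g t_{I,J}(\xi) g^{-1} \in \E(N,R,A)$. For the exterior generators $y := \bigwedgem{m}t_{i,j}(\zeta)$, it suffices to show $c := [g, y] \in \E\bigwedgem{m}\E(n,R,A)$, since then $g y g^{-1} = c \cdot y$ belongs to the group as well. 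The reduction $\rho_A(c) = [\rho_A(g), \bigwedgem{m}t_{i,j}(\bar\zeta)]$ lies in $\bigwedgem{m}\E(n,R/A)$ because $\rho_A(g) \in \bigwedgem{m}\GL_n(R/A)$ normalizes that subgroup; writing $\rho_A(c) = \bigwedgem{m}\bar w$ for some $\bar w \in \E(n,R/A)$ and lifting the parameters of a chosen word for $\bar w$ to $R$ produces $w \in \E(n,R)$ with $\rho_A(\bigwedgem{m}w) = \rho_A(c)$. Hence the residue $r := c \cdot (\bigwedgem{m}w)^{-1}$ lies in $\GL_N(R,A)$.

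The main obstacle is promoting $r$ from the congruence kernel $\GL_N(R,A)$ into the relative elementary group $\E(N,R,A)$. My approach combines two tools: the Vaserstein--Borevich--Vavilov commutator formula $[\E(N,R), \CC(N,R,A)] = \E(N,R,A)$, which in particular yields $[\E(N,R), \GL_N(R,A)] \subseteq \E(N,R,A)$, together with the inclusion $\bigwedgem{m}\E(n,R) \subseteq \E(N,R)$ noted after Proposition~\ref{ImageOfTransvForm}. Concretely, I aim to decompose $g = g_1 g_2$ with $g_1 \in \bigwedgem{m}\GL_n(R)$ reducing to $\rho_A(g)$ and $g_2 := g_1^{-1} g \in \GL_N(R,A)$; the existence of such a lift is the technical heart of the argument and rests on the scheme-theoretic description of $\bigwedgem{m}\GL_n$ developed in Sections~\ref{stabgood}--\ref{stabbad}. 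Given the decomposition, the identity $[g_1 g_2, y] = {}^{g_1}[g_2, y] \cdot [g_1, y]$ splits $c$ into $[g_1, y] \in \bigwedgem{m}\E(n,R)$ (by Theorem~\ref{normalizer}) and a conjugate of $[g_2, y] \in [\GL_N(R,A), \E(N,R)] \subseteq \E(N,R,A)$; Suslin normality absorbs the conjugation by $g_1$, placing $c$ inside $\bigwedgem{m}\E(n,R) \cdot \E(N,R,A) = \E\bigwedgem{m}\E(n,R,A)$, as required.
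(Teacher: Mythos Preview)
Your argument for the inclusion $\subseteq$ is correct and is essentially the same as the paper's (the paper phrases it as ``normalizer functoriality'' but the content is identical).

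The reverse inclusion, however, has a genuine gap at exactly the point you flag as ``the technical heart'': the decomposition $g = g_1 g_2$ with $g_1 \in \bigwedgem{m}\GL_n(R)$ and $\rho_A(g_1) = \rho_A(g)$ need not exist. You are asking that the reduction map $\bigwedgem{m}\GL_n(R) \to \bigwedgem{m}\GL_n(R/A)$ be surjective, but nothing in the stabilizer description of Sections~\ref{stabgood}--\ref{stabbad} guarantees this for an arbitrary ideal $A$; smoothness of the scheme $\bigwedgem{m}\GL_n$ only buys you lifts along nilpotent surjections. Already for scalars the problem is visible: a scalar $\lambda \cdot e \in \bigwedgem{m}\GL_n(R/A)$ lifts to a scalar in $\bigwedgem{m}\GL_n(R)$ only if $\lambda$ lifts to a unit of $R$, which fails in general. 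In other words, the containment $\CC\bigwedgem{m}\GL_n(R,A) \subseteq \bigwedgem{m}\GL_n(R)\cdot\GL_N(R,A)$ that your decomposition presupposes is precisely what one \emph{cannot} assume.

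The paper avoids this obstruction entirely. It first proves the easy inclusion
\[
\bigl[\bigwedgem{m}\GL_n(R)\cdot\GL_N(R,A),\ \E\bigwedgem{m}\E(n,R,A)\bigr]\leq \E\bigwedgem{m}\E(n,R,A),
\]
which is your computation once the lift is available, but then observes that for a general $z\in\CC\bigwedgem{m}\GL_n(R,A)$ one only has the weaker
\[
[z,\E\bigwedgem{m}\E(n,R,A)]\leq \E\bigwedgem{m}\E(n,R,A)\cdot\GL_N(R,A).
\]
Since the right-hand side is contained in $\bigwedgem{m}\GL_n(R)\cdot\GL_N(R,A)$, \emph{iterated} commutators $[[z,x],y]$ with $x,y\in\E\bigwedgem{m}\E(n,R,A)$ do land in $\E\bigwedgem{m}\E(n,R,A)$. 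The paper then invokes the perfectness of $\E\bigwedgem{m}\E(n,R,A)$ (Lemma~\ref{PerfectForm}) together with the Hall--Witt identity to promote this ``second-order'' normalization to genuine normalization. This is the standard device for proving such level-reduction results without any surjectivity hypothesis on $K_1$-type data, and it is exactly what your approach is missing.
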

\begin{proof}
In the proof by $N$ we mean $N_{\GL_{N}(R)}$.

Since $\E(N,R,A)$ and $\GL_{N}(R,A)$ are normal subgroups in $\GL_{N}(R)$, we see that
$$N(\underbrace{\E\bigwedgem{m}\E(n,R,A)}_{=\bigwedgem{m}\E(n,R)\E(N,R,A)})\leq N\left(\E\bigwedgem{m}\E(n,R,A)\GL_{N}(R,A)\right)=\CC\bigwedgem{m}\GL_{n}(R,A). \eqno(1)$$
Note that the last equality is due to normalizer functoriality:
$$N\left(\E\bigwedgem{m}\E(n,R,A)\GL_{N}(R,A)\right)=N\left(\rho_A^{-1}\left(\bigwedgem{m}\E(n,R/A)\right)\right)=\rho_A^{-1}\left(N \left(\bigwedgem{m}\E(n,R/A)\right)\right)=\rho_A^{-1}\left(\bigwedgem{m}\GL_{n}(R/A)\right).$$
In particular, using $(1)$, we get
$$\left[\CC\bigwedgem{m}\GL_{n}(R,A),\E\bigwedgem{m}\E(n,R,A)\right]\leq \E\bigwedgem{m}\E(n,R,A)\GL_{N}(R,A).\eqno(2)$$
On the other hand, it is completely clear that $\E\bigwedgem{m}\E(n,R,A)$ is normal in the right-hand side subgroup. Indeed, it is easy to prove the following stronger inclusion:
$$\left[\bigwedgem{m}\GL_{n}(R)\GL_{N}(R,A),\E\bigwedgem{m}\E(n,R,A)\right]\leq \E\bigwedgem{m}\E(n,R,A).\eqno(3)$$
To check this, we consider a commutator of the form
$$[xy,hg],\qquad x\in \bigwedgem{m}\GL_{n}(R), y\in \GL_{N}(R,A), h\in \bigwedgem{m}\E(n,R), g\in \E(N,R,A).$$
Then $[xy, hg]={}^x[y,h] \cdot [x,h] \cdot {}^h[xy,g]$. We need to prove that all factors on the right-hand side belong to $\E\bigwedgem{m}\E(n,R,A)$. Right away, the second factor lies in the group $\E\bigwedgem{m}\E(n,R,A)$. For the first commutator, we should consider the following inclusions:
$${}^{\bigwedgem{m}\GL_{n}(R)}\left[\GL_{N}(R,A),\bigwedgem{m}\E(n,R)\right]\leq \Bigl[
\underbrace{{}^{\bigwedgem{m}\GL_{n}(R)}\GL_{N}(R,A)}_{=\GL_{N}(R,A)},\underbrace{{}^{\bigwedgem{m}\GL_{n}(R)}\bigwedgem{m}\E(n,R)}_{=\bigwedgem{m}\E(n,R)}\Bigr]\leq \E\bigwedgem{m}\E(n,R,A).$$ 
The element $h\in \bigwedgem{m}\E(n,R)$, so we ignore it in conjugation. The third commutator lies in  $\E\bigwedgem{m}\E(n,R,A)$ due to the following inclusion. 
$$\left[\bigwedgem{m}\GL_{n}(R)\GL_{N}(R,A),\E(N,R,A)\right]\leq\left[\GL_{N}(R),\E(N,R,A)\right]=\E(N,R,A).$$
Now if we recall $(2)$ and $(3)$, we get
$$\left[\CC\bigwedgem{m}\GL_{n}(R,A),\E\bigwedgem{m}\E(n,R,A),\E\bigwedgem{m}\E(n,R,A)\right]\leq \E\bigwedgem{m}\E(n,R,A).\eqno(4)$$
To invoke the Hall–-Witt identity, we need a slightly more precise version of the last
inclusion:
$$\left[\left[\CC\bigwedgem{m}\GL_{n}(R,A),\E\bigwedgem{m}\E(n,R,A)\right],\left[\CC\bigwedgem{m}\GL_{n}(R,A),\E\bigwedgem{m}\E(n,R,A)\right]\right]\leq \E\bigwedgem{m}\E(n,R,A).\eqno(5)$$
Observe that by formula $(2)$ we have already checked that the left-hand side is generated by the commutators
of the form 
$$[uv,[z,y]], \text{ where } u,y\in \E\bigwedgem{m}\E(n,R,A), v\in \GL_N(R,A), z\in \CC\bigwedgem{m}\GL_n(R,A).$$
However,
$$[uv,[z,y]]={}^u[v,[z,y]]\cdot[u,[z,y]],$$
By formula $(4)$ the second commutator belongs to $\E\bigwedgem{m}\E(n,R,A)$, whereas by $(5)$ the first is an element of $\left[\GL_{N}(R,A),\E(N,R)\right]\leq \E(N,R,A)$. 

Now we are ready to finish the proof. By the previous lemma, the group $\E\bigwedgem{m}\E(n,R,A)$
is perfect, and thus, it suffices to show that $[z,[x,y]]\in \E\bigwedgem{m}\E(n,R,A)$ for all $x,y\in \E\bigwedgem{m}\E(n,R,A), z\in \CC\bigwedgem{m}\GL_{n}(R,A)$. Indeed, the Hall–-Witt identity yields
$$[z,[x,y]]={}^{xz}[[z^{-1},x^{-1}],y]\cdot{}^{xy}[[y^{-1},z],x^{-1}],$$
where the second commutator belongs to $\E\bigwedgem{m}\E(n,R,A)$ by $(4)$.
Removing the conjugation by $x\in \E\bigwedgem{m}\E(n,R,A)$ in the first commutator and carrying the conjugation by $z$ inside
the commutator, we see that it only remains to prove the relation $[[x^{-1},z],[z,y]y]\in \E\bigwedgem{m}\E(n,R,A)$. Indeed,
$$[[x^{-1},z],[z,y]y]=[[x^{-1},z],[z,y]]\cdot{}^{[z,y]}[[x^{-1},z],y],$$
where both commutators on the right--hand side belong to $\E\bigwedgem{m}\E(n,R,A)$ by formulas $(4)$ and $(5)$, and moreover,
the conjugating element $[z,y]$ in the second commutator is an element of the group $\E\bigwedgem{m}\E(n,R,A)\GL_{N}(R,A)$, and thus by $(3)$, normalizes $\E\bigwedgem{m}\E(n,R,A)$.
\end{proof}

\subsection{Exterior powers of \texorpdfstring{$\GL_{n}(R)$}{TEXT} as stabilizers}\label{stabgood}

As always, we assume that $n \geq 2m$ and $2\in R^{*}$.

Further on, by a stabilizer of a set of (symmetric or skew--symmetric) polynomials we mean a stabilizer of the induced polynomial $\GL_{n}$--representation. Note that a stabilizer of one polynomial $f(x)$ contains similarities, i.\,e., maps that preserve $f(x)$ up to a multiplier $\lambda$\footnote{This multiplier $\lambda(g)$ is a one-dimensional representation of the group $\GL_{n}(R)$, and thus, $\lambda(g)$ is a power of the determinant $\lambda=\det^{\otimes l}: g \mapsto \det^{l}(g)$. From the last, it follows that
this multiplier equals $1$ for $\SL_{n}$--form of the group.}: $g \circ f(x) = f(g \cdot x) = \lambda(g)\cdot f(x)$.

The goal of this section is to present the group $\bigwedgem{m}\GL_{n}(R)$ as a stabilizer group for some set of (symmetric or skew-symmetric) polynomials. 

The following classical theorem can be found in \cite[Chapter 2, Sections 5--7]{DieCarInvTheor}
\begin{proposition}\label{FormsGood}
The group\footnote{Equivalent formulation is the following: in decomposition of a module $\Sym^{k} \left(\bigwedgem{m} V^{n}\right)$ into irreducibles there are one-dimensional representations if and only if $n$ is divisible by $m$ and $m$ is even, the same is true for $\bigwedgem{k} \left(\bigwedgem{m} V^{n}\right)$ and odd $m$.} $\bigwedgem{m}\GL_{n}(R)$ has an invariant form only in the case $\frac{n}{m} \in \mathbb{N}$. In this case, this form is unique:
\begin{itemize}
\item $f_{n,m}(x) = \sum \sign(I_{1}, \dots, I_{\frac{n}{m}})\; x_{I_{1}}\cdot \dots \cdot x_{I_{\frac{n}{m}}}$ for even $m$;
\item $f_{n,m}(x) = \sum \sign(I_{1}, \dots, I_{\frac{n}{m}})\; x_{I_{1}}\wedge \dots \wedge x_{I_{\frac{n}{m}}}$ for odd $m$,
\end{itemize}
where the sums in the both cases range over all unordered partitions of the set $[n]$ into $m$-element subsets.
\end{proposition}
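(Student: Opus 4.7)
I would prove the proposition in three parts: explicit construction of $f_{n,m}$ via iterated wedge products when $m\mid n$, uniqueness via a one-dimensional target, and a central-character-plus-plethysm analysis for non-existence.

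Suppose $m\mid n$ and set $k = n/m$. The commutation rule $\alpha\wedge\beta = (-1)^{m^2}\beta\wedge\alpha = (-1)^m\beta\wedge\alpha$ for $\alpha,\beta\in\bigwedgem{m}V$ shows that the iterated wedge $(x_1,\ldots,x_k)\mapsto x_1\wedge\cdots\wedge x_k$ on $(\bigwedgem{m}V)^{\otimes k}$ is symmetric when $m$ is even and alternating when $m$ is odd. Its target $\bigwedgem{km}V = \bigwedgem{n}V$ is one-dimensional with $\GL_n$ acting by $\det$, so the diagonal restriction $x\mapsto x\wedge\cdots\wedge x$ descends to a $\GL_n$-equivariant map $\Sym^k(\bigwedgem{m}V)\to\bigwedgem{n}V$ or $\bigwedgem{k}(\bigwedgem{m}V)\to\bigwedgem{n}V$, producing a semi-invariant form with multiplier $\det$. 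Expanding $x = \sum_I x_I e_I$ in the Pl\"ucker basis and reading off the coefficient of $e_1\wedge\cdots\wedge e_n$ recovers precisely the explicit formula $f_{n,m}$, with the signs $\sign(I_1,\ldots,I_{n/m})$ arising from the reordering of the concatenated index sequence into the standard order.

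Uniqueness is then immediate: the wedge map is surjective, as one sees by evaluating on $x = \sum_{j=1}^{k} e_{(j-1)m+1}\wedge\cdots\wedge e_{jm}$, so the invariant line in $\Sym^k(\bigwedgem{m}V^*)$ or $\bigwedgem{k}(\bigwedgem{m}V^*)$ is one-dimensional, which by complete reducibility pins down $f_{n,m}$ up to scalar.

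For non-existence, the scalar subgroup $\lambda I\in\GL_n$ acts on $\bigwedgem{m}V$ with weight $m$, so any semi-invariant of degree $k$ with multiplier $\det^l$ satisfies $km=ln$, enforcing only $n\mid km$. Upgrading this to $m\mid n$ requires decomposing $\Sym^k(\bigwedgem{m}V)$ and $\bigwedgem{k}(\bigwedgem{m}V)$ into irreducible $\GL_n$-modules via the plethysms $h_k[e_m]$ and $e_k[e_m]$, and checking that the rectangular Schur function $s_{(l^n)}$ has nonzero multiplicity only for $k=n/m$, $l=1$, with the parity of $m$ matching the chosen symmetric or exterior power. This plethysm-multiplicity verification is the main technical obstacle; a clean route is to invoke the classical first fundamental theorem of $\SL_n$-invariant theory, which expresses any semi-invariant of degree $k$ on $\bigwedgem{m}V$ as a polynomial in determinantal brackets whose arguments partition $[n]$ into blocks of $m$, forcing $k\geq n/m$ with equality only when $m\mid n$, and then the symmetry/antisymmetry of the minimal-degree bracket is dictated by the sign $(-1)^m$ picked up when permuting two copies.
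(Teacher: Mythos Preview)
The paper does not prove this proposition; it simply quotes it as a classical result from Dieudonn\'e--Carrell. Your outline therefore goes well beyond what the paper offers, and your construction of $f_{n,m}$ via the iterated wedge $(\bigwedgem{m}V)^{\otimes k}\to\bigwedgem{n}V$, with the symmetry/antisymmetry governed by $(-1)^{m}$, is clean and correct.

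There is, however, a gap in your uniqueness paragraph. Surjectivity of the wedge map onto the one-dimensional target $\bigwedgem{n}V$ shows only that the multiplicity of $\det$ in $\Sym^{k}(\bigwedgem{m}V)$ (respectively $\bigwedgem{k}(\bigwedgem{m}V)$) is \emph{at least} one; it does not bound it from above. A surjective $G$-equivariant map $W\twoheadrightarrow L$ with $L$ irreducible merely splits off one copy of $L$ from $W$; further copies may sit inside the kernel. So uniqueness at degree $k=n/m$ is not ``immediate'': it needs the same plethysm-multiplicity input you defer to your third paragraph, where you must check that the multiplicity of $s_{(1^{n})}$ is \emph{exactly} one, not merely nonzero.

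Your FFT sketch for non-existence is also looser than it reads. The classical first fundamental theorem for $\SL_{n}$ concerns invariants on copies of the standard module $V$ (and $V^{*}$), not on $\bigwedgem{m}V$ directly; passing from one to the other by antisymmetrising through $\bigwedgem{m}V\hookrightarrow V^{\otimes m}$ and then tracking which bracket monomials survive the (anti)symmetrisation in the $k$ tensor factors is possible, but that bookkeeping \emph{is} the plethysm computation you were hoping to bypass. None of this is fatal---the statement is classical for a reason---but the two shortcuts (surjectivity $\Rightarrow$ uniqueness, and off-the-shelf FFT $\Rightarrow$ $m\mid n$) do not close the argument as written.
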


In the special case $\frac{n}{m} \in \mathbb{N}$, we can expect that the group $\bigwedgem{m}\GL_{n}(R)$ coincides with a stabilizer of the corresponding form. This is almost true and the following theorem gives the precise answer.

\begin{proposition}\label{StabAndGenerealPower}
We assume that $\frac{n}{m} \in \mathbb{N}$. Then 
\begin{enumerate}
\item in the case $n = 2m$ with $m\geq 3$ we have $\Stabi(f_{n,m}(x)) = \GO_{n}(R)$ or $\GSp_{n}(R)$ depending on a parity of $m$. So in this case $\bigwedgem{m}\GL_{n}(R)$ is a subgroup of orthogonal or symplectic group respectively;
\item for all other cases the group $\bigwedgem{m}\GL_{n}(R)$ coincides with a stabilizer group $\Stabi(f_{n,m}(x))$ of the form $f_{n,m}(x)$; 
\end{enumerate}
All stabilizers are considered as subgroups of a general linear group $\GL_{N}\left( R\right)$.
\end{proposition}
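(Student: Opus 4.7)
\emph{Proof proposal.}
The inclusion $\bigwedgem{m}\GL_{n}(R)\subseteq\Stabi(f_{n,m}(x))$ is immediate from Proposition~\ref{FormsGood}: a direct computation on the explicit polynomial expression shows that any $g\in\GL_{n}(R)$ transforms $f_{n,m}$ by the multiplier $\det(g)^{n/m}$ (up to a global sign), so $\bigwedgem{m}g$ lies in the similitude stabilizer of the form. Hence only the reverse containment, or the explicit failure of equality in the exceptional case, has to be argued in each item.

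In the exceptional case $n=2m$ with $m\geq 3$, every monomial of $f_{2m,m}$ has exactly two factors, so the form is a bilinear pairing on $\bigwedgem{m}R^{n}$. Inspection of the sign $\sign(I_{1},I_{2})$ shows that this pairing is symmetric for even $m$ and skew--symmetric for odd $m$; non--degeneracy follows from $2\in R^{*}$ together with the observation that complementary weights $I$ and $[n]\setminus I$ pair to $\pm 1$ while non--complementary pairs of weights pair to $0$. By the very definitions of $\GO_{N}(R)$ and $\GSp_{N}(R)$ this gives $\Stabi(f_{2m,m})=\GO_{N}(R)$ for even $m$ and $=\GSp_{N}(R)$ for odd $m$. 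A dimension count over a field then shows that $\bigwedgem{m}\GL_{2m}$ is a strict subgroup of the stabilizer, which completes item $(1)$; the sub--case $(n,m)=(4,2)$ is excluded by $m\geq 3$ and is in any event covered by the isomorphism $\bigwedgem{2}\E(4,R)\cong\EO(6,R)$ recalled in Subsection~\ref{square}.

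For the generic divisible case $n=km$ with $k\geq 3$, the plan is to prove the reverse inclusion $\Stabi(f_{n,m}(x))\subseteq\bigwedgem{m}\GL_{n}(R)$ by exploiting the rigidity of a form of degree $k\geq 3$. The idea is that $f_{n,m}$ essentially encodes the Plücker embedding $\Gr(m,n)\hookrightarrow\mathbb{P}(\bigwedgem{m}R^{n})$: polarizing $f_{n,m}$ one obtains a family of $k$--linear forms whose contraction against generic decomposable tuples yields the Plücker quadrics cutting out the affine cone over $\Gr(m,n)$ inside $\bigwedgem{m}R^{n}$. Any $g\in\Stabi(f_{n,m})$ then preserves this ideal and hence sends decomposable $m$--vectors to decomposable $m$--vectors; using transitivity of $\bigwedgem{m}\GL_{n}(R)$ on frames of the standard basic decomposables $\{e_{I}\}$ one reconstructs $g$ as $\bigwedgem{m}h$ for some $h\in\GL_{n}(R)$, with the remaining scalar ambiguity pinned down by the action on a single $e_{I}$.

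The main obstacle is this last step: making the Plücker--variety argument scheme--theoretic over an arbitrary commutative ring $R$, rather than over an algebraically closed field where Chow's theorem on automorphisms of Grassmannians is classical. The alternative route, announced at the end of Section~\ref{main_results} and pursued in the subsequent subsections, is to bypass the geometry entirely: one extracts from $f_{n,m}$--invariance an explicit system of polynomial identities in the matrix entries of $g$ and verifies that this system generates the same ideal as the defining equations of the group scheme $\bigwedgem{m}\GL_{n}(\blank)$ inside $\GL_{N}(\blank)$ obtained separately from the coordinate ring of $\Gr(m,n)$. Checking this equality of polynomial ideals is the technical heart of the proof.
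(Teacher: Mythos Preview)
Your treatment of item~(1) is essentially the paper's: the form $f_{2m,m}$ is a non-degenerate bilinear pairing on $\bigwedgem{m}R^{2m}$, symmetric or alternating according to the parity of $m$, so its similitude group is $\GO_{N}(R)$ or $\GSp_{N}(R)$ by definition. Fine.

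For item~(2) your proposal diverges from the paper and is, as written, incomplete. You outline a Pl\"ucker/Grassmannian strategy---recover decomposable vectors from the polarized form, then invoke a Chow-type rigidity statement---and you yourself flag the obstacle: this works over an algebraically closed field but not obviously over an arbitrary commutative ring. Your fallback (``extract polynomial identities and check they generate the same ideal as the defining equations of $\bigwedgem{m}\GL_n$'') is not carried out, and is not what the subsequent subsections of the paper do either; those subsections concern the \emph{transporter} argument, not the stabilizer computation for the divisible case.

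The paper's actual proof of the reverse inclusion in item~(2) is a one-line appeal to the classification of maximal subgroups of classical groups due to Seitz~\cite[Table~1]{SeitzMaxSub}: that table says the only proper closed overgroups of $\bigwedgem{m}\SL_n$ inside $\SL_N$ are the orthogonal or symplectic groups (and for $m=2$ the exterior square is already maximal, case~$\mathrm{I}_6$). One then checks directly that when $n/m\geq 3$ the form $f_{n,m}$ has degree $\geq 3$ and is not preserved by the full orthogonal or symplectic group, so the stabilizer cannot strictly contain $\bigwedgem{m}\GL_n(R)$. This black-box route sidesteps entirely the ring-theoretic difficulties you identify; your geometric approach would be more self-contained if it could be made to work, but as it stands the gap over general $R$ is real and you have not closed it.
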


\begin{remark}
In the case $(n,m) = (4,2)$, the stabilizer group equals $\GO_{6}(R)$, but it also coincides with $\bigwedgem{2}\GL_{4}(R)$.
\end{remark}

\begin{proof}
Recall that we already know that in all cases the group $\bigwedgem{m}\GL_{n}(R)$ is a subgroup of $\Stabi(f_{n,m}(x))$ by Proposition~\ref{FormsGood}. So we have to analyze the reverse inclusion.

\textit{The idea of a proof} is to use the classification result of maximal subgroups in classical groups; we refer to the classical book~\cite[Table 1]{SeitzMaxSub}.

Using notation from this book, in our case $V$ is a free $\binom{n}{m}$--dimensional $R$--module, $X$ is the group $\bigwedgem{m}\GL_{n}(R)$. Then among overgroups of $\bigwedgem{m}\GL_{n}(R)$ can be only orthogonal or symplectic groups in a case $m\geq 3$. Moreover, for $m=2$ we know that $\bigwedgem{2}\GL_{n}(R)$ is maximal by the case $\MakeUppercase{\romannumeral 1}_{6}$ from Table 1 of \cite{SeitzMaxSub}. 

But in the case $\frac{n}{m} \geq 3$, by a direct calculation we can see that orthogonal or symplectic group is not contained in the corresponding stabilizer group.

On the contrary, in the case $n = 2m$ the form $f_{2m, m}(x)$ is equivalent to
$$f(y) = \sum\limits_{i = 1}^{m} y_{-i}\cdot y_{i}$$
for coordinates $y_{-m}, \dots, y_{-1}, y_{1}, \dots , y_{m}$. By definition, the stabilizer of the last one is  orthogonal or symplectic group depending on the parity of $m$.  
\end{proof}

For the case $\frac{n}{m} \not\in\mathbb{N}$, we assume that $n = lm +r$ for some $l,r \in \mathbb{N}$ and consider an ideal $I_{n,m}$, generated by forms $f_{m\cdot l, m}(x)$ for all choices of $m\cdot l$--element sets inside $[n]$. It is easy to prove that such ideals are representations of $\SL_n(R)$ (or $\GL_n(R)$) with the highest weights $\varpi_{m\cdot l}$, and its are \textbf{freely} generated by the forms. So rank equals $\binom{n}{m\cdot l}$. Then as a generalization of the first case, we have an analogous result presented in the following theorem.

\begin{proposition}\label{mNotDividesn}
In the case when $m$ does not divide $n$, the group $\bigwedgem{m}\GL_{n}(R)$ is a stabilizer of the ideal $I_{n,m}$.
\end{proposition}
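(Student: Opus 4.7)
The plan is to establish the two inclusions $\bigwedgem{m}\GL_n(R) \leq \Stabi(I_{n,m})$ and $\Stabi(I_{n,m}) \leq \bigwedgem{m}\GL_n(R)$ separately, mirroring the strategy of Proposition~\ref{StabAndGenerealPower}.

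For the first inclusion, I would leverage the observation already recorded in the paragraph preceding the statement: the $R$-linear span of the generators $\{f_{ml,m}^{T}\}_{T \in \bigwedgem{ml}[n]}$ inside the ambient polynomial algebra is an $\SL_n(R)$-subrepresentation, freely generated as an $R$-module of rank $\binom{n}{ml}$ and isomorphic to the irreducible $\varpi_{ml}$-representation. Since every $\bigwedgem{m}(g)$ with $g\in\GL_n(R)$ permutes the index sets $T$ (up to a character in $\det(g)$), each generator is sent to an $R$-linear combination of generators; hence the ideal $I_{n,m}$ they generate is stabilized by $\bigwedgem{m}\GL_n(R)$.

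For the reverse inclusion I would invoke Seitz's classification of maximal subgroups of classical groups, exactly as in the proof of Proposition~\ref{StabAndGenerealPower}. Under the running hypothesis $m\nmid n$, Proposition~\ref{FormsGood} rules out the existence of any invariant form on $\bigwedgem{m}V^{n}$, so no orthogonal or symplectic group can sit strictly between $\bigwedgem{m}\GL_n(R)$ and $\GL_N(R)$; for $m=2$ this is case $\mathrm{III}_{6}$ of Seitz's Table~1 directly. Consequently $\bigwedgem{m}\GL_n(R)$ is maximal in $\GL_N(R)$. Since $\Stabi(I_{n,m})$ contains $\bigwedgem{m}\GL_n(R)$ by the first step and is manifestly a proper subgroup of $\GL_N(R)$ (the full general linear group does not preserve the nontrivial proper submodule generated by the $f_{ml,m}^{T}$ in the polynomial algebra), equality must hold.

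The main obstacle is that Seitz's theorem is stated for algebraic groups over an algebraically closed field, whereas we need the equality over an arbitrary commutative ring $R$ with $2\in R^{*}$. The natural remedy is to argue scheme-theoretically: both $\bigwedgem{m}\GL_n(\blank)$ and $\Stabi(I_{n,m})$ are representable subfunctors of $\GL_N(\blank)$ cut out by polynomial equations independent of $R$, so the equality of schemes can be checked after base change to an algebraically closed field and then transferred back to $R$ by functoriality. Implementing this cleanly dovetails with the equation-extraction technique discussed in the paper's Appendix.
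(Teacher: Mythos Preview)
Your approach is essentially the paper's own: the paper's proof of this proposition is a one-line appeal to Seitz's Table~1 (``there are no exceptional embeddings in these cases''), and you have simply unpacked that appeal, together with the scheme-theoretic transfer from algebraically closed fields to general $R$ that the paper carries out later in the proof of Proposition~\ref{TransInGeneralPower}. Two small corrections: the relevant Seitz case for $m=2$ is $\mathrm{I}_{6}$, not $\mathrm{III}_{6}$; and in your first inclusion, $\bigwedgem{m}(g)$ for general $g\in\GL_n(R)$ does \emph{not} ``permute the index sets $T$'' --- the correct statement (already recorded in the paper just before the proposition) is that the $R$-span of the generators $f^{T}_{ml,m}$ is the irreducible $\varpi_{ml}$-representation of $\GL_n$, hence invariant, which is what you actually need.
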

\begin{proof}
In the same way from \cite[Table 1]{SeitzMaxSub}, we can see that there are no exceptional embeddings in these cases.
\end{proof}

Summing up, we have introduced (symmetric or skew--symmetric) polynomial ideals for every values of $(n,m)$ with a condition $\frac{n}{m}>2$. Further on, we denote this ideal by $I_{n,m}$.

\subsection{Normalizer Theorem}\label{transgood}

\begin{thm}
Let $R$ be a commutative ring, and let $n\geq 4$. Then
$$N_{\GL_{N}(R)}(\bigwedgem{m}\E(n,R)) = N_{\GL_{N}(R)}(\bigwedgem{m}\SL_{n}(R)) = \Tran_{\GL_{N}(R)}(\bigwedgem{m}\E(n,R)), \bigwedgem{m}\SL_{n}(R)) = \bigwedgem{m}\GL_{n}(R).$$
\end{thm}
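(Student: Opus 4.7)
My plan is to close the chain of trivial inclusions between the four groups by proving the single substantial direction
$$\Tran_{\GL_N(R)}(\bigwedgem{m}\E(n,R), \bigwedgem{m}\SL_n(R)) \leq \bigwedgem{m}\GL_n(R).$$
The easy half is assembled as follows. Suslin's normality theorem applied after the Cauchy--Binet map gives $\bigwedgem{m}\E(n,R) \trianglelefteq \bigwedgem{m}\GL_n(R)$, so $\bigwedgem{m}\GL_n(R) \leq N_{\GL_N(R)}(\bigwedgem{m}\E(n,R))$; analogously the scheme-theoretic normality $\bigwedgem{m}\SL_n \trianglelefteq \bigwedgem{m}\GL_n$ yields $\bigwedgem{m}\GL_n(R) \leq N_{\GL_N(R)}(\bigwedgem{m}\SL_n(R))$. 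Both normalizers lie in the transporter tautologically, using $\bigwedgem{m}\E(n,R) \leq \bigwedgem{m}\SL_n(R)$.

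For the main inclusion I will invoke the stabilizer presentation of $\bigwedgem{m}\GL_n(\blank)$ as a group scheme established in Subsections~\ref{stabgood}--\ref{stabbad}: the (symmetric or skew-symmetric) polynomial ideal $I_{n,m}$ has scheme-theoretic stabilizer in $\GL_N$ equal to $\bigwedgem{m}\GL_n$, with an extra invariant brought in for the exceptional $n=2m$ case via Proposition~\ref{StabPresExc}. Write $V \subseteq R[x_I : I \in \bigwedgem{m}[n]]$ for the $R$-submodule spanned by the generating polynomials of $I_{n,m}$, which carries an irreducible $\bigwedgem{m}\GL_n$-representation; the stabilizer description rewrites as $\bigwedgem{m}\GL_n(R) = \{h \in \GL_N(R) : hV = V\}$. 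Fix $g \in \Tran_{\GL_N(R)}(\bigwedgem{m}\E(n,R), \bigwedgem{m}\SL_n(R))$. For every $e \in \bigwedgem{m}\E(n,R)$ one has $g e g^{-1} \in \bigwedgem{m}\SL_n(R)$, which preserves $V$, so $(geg^{-1})V \subseteq V$. Rewriting this yields $e(g^{-1}V) \subseteq g^{-1}V$, meaning $g^{-1}V$ is $\bigwedgem{m}\E(n,R)$-stable and of the same $R$-rank as $V$.

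The remaining task is to upgrade this $\bigwedgem{m}\E$-invariance to the equality $g^{-1}V = V$, which by the stabilizer description places $g$ in $\bigwedgem{m}\GL_n(R)$. The intended argument is fiberwise: at every prime $\mathfrak p \trianglelefteq R$, the elementary and special linear images coincide, $\bigwedgem{m}\E(n, k(\mathfrak p)) = \bigwedgem{m}\SL_n(k(\mathfrak p))$, and representation theory over the residue field shows that $V_{k(\mathfrak p)}$ appears with multiplicity one as a $\bigwedgem{m}\GL_n$-constituent in the ambient graded piece of the polynomial ring, so any $\bigwedgem{m}\E$-stable subspace of matching dimension must equal $V_{k(\mathfrak p)}$. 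Descent of this identity back to $R$ proceeds via the scheme-theoretic character of the stabilizer condition established in the Appendix: the equation $hV = V$ translates into the vanishing of explicit polynomial identities in the entries of $h$, and vanishing at every residue field suffices because these defining polynomials generate a reduced ideal.

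The principal obstacle I anticipate is the exceptional case $n = 2m$: the form $f_{2m,m}$ alone defines only the overgroup $\GO_{2m}(R)$ or $\GSp_{2m}(R)$ of $\bigwedgem{m}\GL_{2m}(R)$, so one must couple the form with a second invariant (Proposition~\ref{StabPresExc}) and carefully track the similitude multipliers in order to descend the multiplicity-one argument from fields to arbitrary commutative rings. A secondary subtlety is ensuring that $\Stabi(I_{n,m})$ is reduced enough to equal $\bigwedgem{m}\GL_n$ on the nose rather than up to an infinitesimal thickening---this is precisely the point that the explicit defining equations from the Appendix are designed to control.
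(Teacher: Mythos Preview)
Your overall architecture matches the paper's: close the chain by proving the single inclusion $\Tran \leq \bigwedgem{m}\GL_n(R)$ via the stabilizer presentation of Subsections~\ref{stabgood}--\ref{stabbad}, using multiplicity one of the module $I_{n,m}$ in the relevant graded piece. Two points, however, need repair.

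A minor issue in the easy direction: Suslin's theorem pushed through the Cauchy--Binet map gives $\bigwedgem{m}\E(n,R) \trianglelefteq \bigwedgem{m}\bigl(\GL_n(R)\bigr)$, the \emph{group-theoretic} image, but what you need is normality inside the larger group $\bigwedgem{m}\GL_n(R)$ of $R$-points of the \emph{scheme-theoretic} image (recall from Subsection~\ref{general} that $\bigwedgem{m}\bigl(\GL_n(R)\bigr) \lneq \bigwedgem{m}\GL_n(R)$ in general). The paper closes this gap by citing the Petrov--Stavrova normality theorem for elementary subgroups of isotropic reductive groups.

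The substantive gap is your descent step. You claim that $\bar g \in \bigwedgem{m}\GL_n\bigl(k(\mathfrak p)\bigr)$ for every prime $\mathfrak p$ forces $g \in \bigwedgem{m}\GL_n(R)$, ``because these defining polynomials generate a reduced ideal.'' But reducedness of the subscheme $\bigwedgem{m}\GL_n \subset \GL_N$ over $\mathbb{Z}$ is irrelevant here: for any defining polynomial $f$, the hypothesis only says that $f(g)$ maps to zero in every residue field $k(\mathfrak p)$, i.e.\ $f(g)$ lies in the nilradical of $R$. For non-reduced $R$ you cannot conclude $f(g)=0$, so your argument proves the theorem only over reduced rings. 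The paper's fix is to replace residue fields by \emph{localizations} $R_M$ at maximal ideals: membership of $g$ in a closed subscheme of $\GL_N$ is a genuinely local condition (an element of $R$ vanishes iff it vanishes in every $R_M$), and over the local ring $R_M$ one still has $\E(n,R_M)=\SL_n(R_M)$, which is exactly what the multiplicity-one argument needs to force $I_{n,m}^{\,g}=I_{n,m}$.
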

\begin{proof} 
Clearly, $\bigwedgem{m}\GL_{n}(R) \leq N_{\GL_{N}(R)}(\bigwedgem{m}\SL_n(R))$ by the very definition of the extended Chevalley group. By \cite{PetrovStavrovaIsotropic}, we have $\bigwedgem{m}\GL_{n}(R) \leq N_{\GL_{N}(R)}(\bigwedgem{m}\E(n,R))$. On the other hand, both $N_{\GL_{N}(R)}(\bigwedgem{m}\E(n,R))$ and $N_{\GL_{N}(R)}(\bigwedgem{m}\SL_{n}(R))$ are obviously contained in $\Tran_{\GL_{N}(R)}(\bigwedgem{m}\E(n,R)), \bigwedgem{m}\SL_{n}(R))$. Thus, to finish the proof of the theorem, it suffices to check that $\Tran_{\GL_{N}(R)}(\bigwedgem{m}\E(n,R)), \bigwedgem{m}\SL_{n}(R))$ is contained in $\bigwedgem{m}\GL_{n}(R)$. This inclusion is proved in the case $\frac{n}{m}\neq 2$ in Proposition~\ref{TransInGeneralPower}, and in the case $n= 2m$ in Proposition~\ref{TransInGeneralPowerBad}.

\begin{proposition}\label{TransInGeneralPower}
In the case $\frac{n}{m}\neq 2$, we have the inclusion 
$$\Tran_{\GL_{N}(R)}(\bigwedgem{m}\E(n,R), \bigwedgem{m}\SL_{n}(R) )\leq \bigwedgem{m}\GL_{n}(R).$$
\end{proposition}
\begin{remark}
The bizarre, at first glance, choice of ideals for stabilizer representation for group $\bigwedgem{m}\GL_n(R)$ is entirely motivated (classically and presently) by the following proof.
\end{remark}
\begin{proof}[Proof of Proposition~$\ref{TransInGeneralPower}$]
We take any $g \in \Tran_{\GL_{N}(R)}(\bigwedgem{m}\E(n,R),\;\bigwedgem{m}\GL_{n}(R))$ and $ h \in \bigwedgem{m}\E(n, R)$, $a:= ghg^{-1}$. Then by $I_{n,m}^{g}$ we denote a set $\{f( g \circ x): \; f(x) \in I_{n,m}\}$, where $(g \circ x)_{J} = \sum\limits_{K} g_{JK}x_{K}$ is an action of the linear operator $g$ on coordinates $\{x_{I}\}_{I \in \bigwedgem{m}[n]}$. The set $I_{n,m}^{g}$ is a translation of the ideal $I_{n,m}$.

It is easy to prove that $I_{n,m}^{g}$ is an $\E(n,R)$--invariant ideal.

Also, we emphasize that the ideal $I_{n,m}$ was chosen\footnote{It is important, that $n$ is not less than product ``degree of polynomials'' $\times$ ``exterior power''.} as a unique ideal for the corresponding $k$ such that in the decomposition of $\Sym^{k}\left( \bigwedgem{m} V^{n} \right)$ or $\bigwedgem{k}\left( \bigwedgem{m} V^{n} \right)$ into irreducible modules
\begin{enumerate}
\item $I_{n,m}$ has multiplicity $1$, i.\,e., there are no isomorphic to $I_{n,m}$ modules in the decomposition;
\item dimension of the $I_{n,m}$ is the smallest in the decomposition\footnote{For the case $\frac{n}{m} \in \mathbb{N}$, this is a corollary of the uniqueness of stabilizing form. For the case $\frac{n}{m}\not \in \mathbb{N}$, this is a corollary of the uniqueness and hook length formula for dimension of the representation $V^{\lambda}$.}.
\end{enumerate}

The last ingredient we need is that the decomposition of $\Sym^{k}\left( \bigwedgem{m} V^{n} \right)$ or $\bigwedgem{k}\left( \bigwedgem{m} V^{n} \right)$ into $\E(n, R)$--invariant modules coincide with the one for $\GL_{n}(R)$. This is a corollary of the general idea that elementary subgroup $\E_G$ of any Chevalley--Demazure group scheme $G$ is dense. 

More precisely, \cite[Theorem 3.6]{DemGab} says that stabilizer of any irreducible $\SL_{n}(R)$ submodule in the decomposition is a closed subscheme in the general linear group. But element of the general linear group over a ring $R$ belongs to $R$-points of the closed subscheme if and only if it is true for every localization $R_{M}$ of a ring $R$ over maximal ideal $M$. Over any local ring $R_{M}$ elementary group coincides with $R_{M}$-points of Chevalley--Demazure group scheme $G$. From this follows that every irreducible submodule for the group $\SL_{n}(R)$ is irreducible for $\E(n,R)$ as well. Therefore the decompositions coincide.

Finally, we see that $I_{n,m}^{g}$ should be equal to $I_{n,m}$ because of the properties (1) and (2). So $g \in \bigwedgem{m}\GL_{n}(R)$, this case is done.
\end{proof}

This completes the proof of the theorem.
\end{proof}

\begin{remark}
The last proof in the case of 1-dimensional representation, i.\,e., of invariant form, is simpler. We present it below.

For the case $\frac{n}{m}\in \mathbb{N}$, let $g\in \Tran_{\GL_{N}(R)}(\bigwedgem{m}\E(n,R), \bigwedgem{m}\SL_{n}(R))$, $h\in \bigwedgem{m}\E(n,R)$. Then $a:=ghg^{-1}\in \bigwedgem{m}\SL_{n}(R)$, and thus $f_{n,m}(ax)=f_{n,m}(x)$ for all $x$. Substituting $gy$ for $x$, we get
$$f_{n,m}(gh y)=f_{n,m}(g y).$$
Next, let the map $F$ take each $v = \left(v_1,\ldots,v_{\binom{n}{m}}\right)$ to $f_{n,m}(g v)$. Then by our assumption $$F(h y)=F(y).$$
Hence, the form $F$ is invariant under the action of the elementary group $\bigwedgem{m}\E(n,R)$. Therefore, $F(y)=c\cdot f_{n,m}(y)$. Since this argument is true for $g^{-1}$, we obtain that the constant $c$ is invertible.
It follows that 
$$f_{n,m}(g y)=c\cdot f_{n,m}(y),\text{ i.\,e., }g\in \bigwedgem{m}\GL_{n}(R).$$
\end{remark}

\subsection{Stabilizers in the exceptional case \texorpdfstring{$n = 2m$}{TEXT}}\label{stabbad}

As we know from Proposition~\ref{StabAndGenerealPower}.1, in the special case $n = 2m$ the minimal-dimensional $\GL_{n}$-subrepresentation of $\Sym^{2}\left(\bigwedgem{m} V^{n} \right)$ (or $\bigwedgem{2}\left(\bigwedgem{m} V^{n} \right)$)  is stabilized by the whole orthogonal (or symplectic) group. So this representation cannot be used for stabilizer representation for the group $\bigwedgem{m} \GL_{n}(R)$.  

However, the decomposition of $\Sym^{2}\left(\bigwedgem{m} V^{n} \right)$ (or $\bigwedgem{2}\left(\bigwedgem{m} V^{n} \right)$) into irreducibles is classically known (see \cite[Exercise \MakeUppercase{\romannumeral 1}.8.9]{McDon}).
\begin{proposition} \label{Decomp}
We have the following decomposition into irreducible $\GL_{n}(R)$--modules
\begin{align*}
\Sym^{2}\left( \bigwedgem{m} V^{n} \right) &\cong \bigoplus_{l = 0, 2, 4 , \dots } V^{\lambda_{l}},\\
\bigwedgem{2}\left( \bigwedgem{m} V^{n} \right) &\cong \bigoplus_{l = 1, 3, \dots} V^{\lambda_{l}},
\end{align*}
where $\lambda_{l}$ is a Young diagram with two columns of size $n+l$ and $n-l$ and in both cases summations are going up to $n$.
\end{proposition}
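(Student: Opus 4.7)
My plan is to derive the decomposition via character theory combined with a classical plethystic identity, interpreting $\lambda_l$ as the two-column partition of shape $(2^{m-l},1^{2l})$ (columns of heights $m+l$ and $m-l$). Set $W = \bigwedgem{m} V^{n}$ with $n = 2m$; on a diagonal $g \in \GL_n$ with eigenvalues $x_1,\dots,x_n$ the character of $W$ equals $e_m(x_1,\dots,x_n)$.

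First, I would decompose $W \otimes W$ by Pieri's rule applied to $e_m \cdot e_m = s_{(1^m)} \cdot s_{(1^m)}$: multiplication by $s_{(1^m)}$ adds a vertical $m$-strip, and adding such a strip to the single column $(1^m)$ produces precisely the partitions $\lambda_l$ with column heights $(m+l,\,m-l)$ for $l=0,\dots,m$, each appearing with multiplicity one. Since $n = 2m$, every such diagram has at most $n$ rows, so none degenerates; hence
$$W \otimes W \;\cong\; \bigoplus_{l=0}^{m} V^{\lambda_l}.$$

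Next, I would separate $\Sym^2 W$ from $\bigwedgem{2} W$ via the Adams identity $\chi_{\Sym^2 W}(g) - \chi_{\bigwedgem{2} W}(g) = \chi_W(g^2)$, whose right-hand side equals $e_m(x_1^2,\dots,x_n^2)$. The multiplicity of $V^{\lambda_l}$ in the virtual character $\Sym^2 W - \bigwedgem{2} W$ is therefore the coefficient of $s_{\lambda_l}$ in the Schur expansion of $e_m(x^2)$. The heart of the argument is the symmetric-function identity
$$e_m(x_1^2,\dots,x_n^2) \;=\; \sum_{l=0}^{m} (-1)^l\, s_{\lambda_l}(x),$$
which I would establish by combining (i) the factorization $\prod_i(1 + t x_i^2) = \prod_i(1 + \sqrt{-t}\,x_i)(1 - \sqrt{-t}\,x_i)$, extracting the coefficient of $t^m$ to obtain $e_m(x^2) = \sum_{k=0}^{2m} (-1)^{m+k} e_k(x)\,e_{2m-k}(x)$, with (ii) the dual Jacobi--Trudi formula $s_{\lambda_l} = e_{m+l}e_{m-l} - e_{m+l+1}e_{m-l-1}$, and a short telescoping comparing the two alternating sums. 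This plethystic identity is the technical core and is precisely the content of \cite[Ex.~I.8.9]{McDon}.

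Combining the three steps, the multiplicity of $V^{\lambda_l}$ in $\Sym^2 W$ is $\tfrac{1}{2}(1 + (-1)^l)$ and in $\bigwedgem{2} W$ it is $\tfrac{1}{2}(1 - (-1)^l)$, which yields the stated parity-indexed decompositions; the module-level statement (as opposed to the character-level one) follows by complete reducibility of polynomial $\GL_n$-representations. The main obstacle is the Schur expansion of $e_m(x^2)$; the rest of the argument is a formal consequence of characters and Pieri's rule.
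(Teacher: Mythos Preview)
Your argument is correct and is precisely an unpacking of the reference the paper cites: the paper does not give its own proof of this proposition but simply attributes it to \cite[Exercise~I.8.9]{McDon}, and your character-theoretic derivation via Pieri, the Adams identity, and the Schur expansion of $e_m(x_1^2,\dots,x_n^2)$ is exactly how that exercise is meant to be done. Two minor remarks: you correctly read the column heights as $m+l$ and $m-l$ (the ``$n+l$, $n-l$'' in the statement is a typo), and your restriction to $n=2m$ is harmless in context but unnecessary---the same computation goes through for general $n\geq 2m$, with the summation truncated at $l=m$.
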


The problem is that not one-dimensional subrepresentations are not free $R$-modules\footnote{Its combinatorial structure is complicated enough even over $\mathbb{C}$}, and , moreover, explicit generators are not known. So it is not clear how to use them separately for our goals. Nevertheless, we have the following stabilizer presentation.

\begin{proposition} \label{StabPresExc}
With fixed $m$ and $n$, for any nonempty set $I \subset \{2,3,4,5,\dots,n-1 \}$ stabilizer of the ideal generated by summands $\bigoplus_{i \in I} V^{\lambda_{i}}$ coincides with $\bigwedgem{m} \GL_{n}(R)$.  
\end{proposition}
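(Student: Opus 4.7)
The easy inclusion $\bigwedgem{m}\GL_n(R) \leq \Stabi\bigl(\bigoplus_{i \in I} V^{\lambda_i}\bigr)$ is immediate: each summand $V^{\lambda_i}$ is by construction a $\GL_n$-submodule of either $\Sym^2(\bigwedgem{m}V^n)$ or $\bigwedgem{2}(\bigwedgem{m}V^n)$, so the image of $\GL_n(R)$ under the Cauchy--Binet map preserves each $V^{\lambda_i}$, hence the direct sum, hence the ideal they generate.

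For the reverse inclusion I would mirror the framework of Proposition \ref{TransInGeneralPower}. The stabilizer of the ideal is visibly a closed subscheme of $\GL_N$, and together with the density argument from that proof (that $\E(n,R)$-invariant decompositions of the relevant representations coincide with $\GL_n(R)$-invariant ones, via localization to local rings where $\E$ exhausts $\GL_n$), it suffices to show that any overgroup of $\bigwedgem{m}\GL_n$ inside $\GL_N$ preserving $\bigoplus_{i \in I} V^{\lambda_i}$ must already lie in $\bigwedgem{m}\GL_n$. By Seitz's classification of maximal subgroups \cite[Table 1, Case $\MakeUppercase{\romannumeral 1}_6$]{SeitzMaxSub}, the only proper maximal overgroup of $\bigwedgem{m}\GL_n$ inside $\GL_N$ is $\GO_N$ when $m$ is even, or $\GSp_N$ when $m$ is odd. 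Since $\GL_N$ itself plainly does not stabilize a proper nonzero subspace of $\Sym^2$ or $\bigwedgem{2}$, the task reduces to excluding $\GO_N$ (respectively $\GSp_N$).

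The crucial point is the classical fact that $\Sym^2(V^N)$ decomposes as a $\GO_N$-module into the one-dimensional trivial representation spanned by the invariant form $f_{n,m}$ of Proposition \ref{FormsGood} (which is the $V^{\lambda_0}$ summand) plus a single irreducible "traceless" component; analogously $\bigwedgem{2}(V^N)$ as a $\GSp_N$-module splits as trivial plus an irreducible "primitive" component. Restricted to $\bigwedgem{m}\GL_n$, this irreducible $\GO_N/\GSp_N$-component matches the direct sum $\bigoplus_{l \neq 0} V^{\lambda_l}$ in the corresponding parity. Because $0 \notin I \subseteq \{2, 3, \ldots, n-1\}$ and $I$ is nonempty, the subspace $\bigoplus_{i \in I} V^{\lambda_i}$ is a proper nonzero $\GL_n$-invariant subspace of this irreducible $\GO_N/\GSp_N$-component; were $\GO_N$ (or $\GSp_N$) to stabilize it, this would contradict the irreducibility. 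Therefore the only elements of $\GL_N(R)$ stabilizing the ideal lie in $\bigwedgem{m}\GL_n(R)$.

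The principal obstacle is to pin down the decomposition of the traceless/primitive $\GO_N/\GSp_N$-component into $\GL_n$-irreducibles, in particular to ensure it contains \emph{all} of the $V^{\lambda_l}$ with $l \neq 0$ in the appropriate parity, so that $\bigoplus_{i \in I} V^{\lambda_i}$ is genuinely a proper subspace of an irreducible $\GO_N/\GSp_N$-module. This matching of highest weights follows from classical invariant theory of orthogonal and symplectic groups (cf.\ \cite{DieCarInvTheor}, \cite[Exercise \MakeUppercase{\romannumeral 1}.8.9]{McDon}) combined with Proposition \ref{Decomp}. A secondary point is to transfer the density and closed-subscheme argument from Proposition \ref{TransInGeneralPower} verbatim to the present setting over an arbitrary commutative ring with $2 \in R^*$; this goes through because the required ingredients (normality of $\E$, decomposition of representations, local-to-global for closed subschemes) are insensitive to the specific choice of invariant subspace.
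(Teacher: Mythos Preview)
Your overall strategy coincides with the paper's: both invoke Seitz's classification \cite[Table~1]{SeitzMaxSub}, exactly as in Proposition~\ref{StabAndGenerealPower}, to reduce the question to excluding $\GO_N$ (resp.\ $\GSp_N$). The paper's proof is a single sentence, adding only that the Hook Length Formula computes $\dim V^{\lambda_l}$; you instead supply the more explicit observation that the traceless (resp.\ primitive) part of $\Sym^2 V^N$ (resp.\ $\bigwedgem{2} V^N$) is an irreducible $\GO_N$- (resp.\ $\GSp_N$-) module, so a proper nonzero $\GL_n$-invariant subspace of it cannot be $\GO_N$- (resp.\ $\GSp_N$-) stable. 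That is a perfectly good way to carry out the exclusion and is in the same spirit as the paper's argument.

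There is one concrete error in your identification. In the decomposition of Proposition~\ref{Decomp} (columns of lengths $m+l$ and $m-l$), the one-dimensional summand carrying the invariant form $f_{2m,m}$ is $V^{\lambda_m}$ (columns $2m$ and $0$, i.e.\ the determinant character), \emph{not} $V^{\lambda_0}$; the summand $V^{\lambda_0}$ (two columns of length $m$) is the large ``coordinate-ring'' piece of the Grassmannian, as Corollary~\ref{CorolOfthm9} makes explicit. Hence your sentence ``$0\notin I$, so $\bigoplus_I V^{\lambda_i}$ lies in the traceless component'' rests on the wrong premise. The repair is immediate and in fact makes the properness step transparent: since $V^{\lambda_0}$ sits inside the traceless $\Sym^2$-component and $V^{\lambda_1}$ sits inside the irreducible $\bigwedgem{2}$-component, the exclusion $0,1\notin I$ already forces the even and odd parts of $\bigoplus_{i\in I}V^{\lambda_i}$ to be \emph{proper} in those irreducible $\GO_N/\GSp_N$-modules whenever they are nonzero, which is exactly the input your irreducibility argument needs.
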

\textit{The proof} goes exactly the same way as in Proposition~\ref{StabAndGenerealPower} using classification from~\cite[Table 1]{SeitzMaxSub}. The only thing worth to mention that the dimension of the representations $V^{\lambda_{l}}$ can be found by Hook Length Formula.

\medskip
So we get stabilization representation for all possible values of $(n,m)$, i.\,e., $m \geq 2, n \geq 4$. 
Before the last statement, we should recall some classical results about coordinate ring of the Grassmannian.

\begin{proposition}\label{DecompGrassmman}
A coordinate ring of a Grassmann variety $\Gr_{m}(\mathbb{C}^{n})$ over $\mathbb{C}$ as a $\GL_{n}(\mathbb{C})$-module isomorphic to infinite sum: 
$$\mathbb{C}\oplus( \mathbb{C}^{n})^{(1^{m})}\oplus ( \mathbb{C}^{n})^{(2\cdot 1^{m})} \oplus ( \mathbb{C}^{n})^{(3 \cdot 1^{m})} \oplus \dots.$$
\end{proposition}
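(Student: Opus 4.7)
The plan is to identify the homogeneous coordinate ring of the Pl\"ucker embedding with the sum of spaces of global sections of powers of the Pl\"ucker bundle, and then invoke the Borel--Weil theorem to recognize each graded piece as an irreducible $\GL_{n}(\mathbb{C})$-module.

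First, I would realize $\Gr_{m}(\mathbb{C}^{n})$ as the image of the $\GL_{n}(\mathbb{C})$-equivariant Pl\"ucker embedding
$$\iota\colon \Gr_{m}(\mathbb{C}^{n})\hookrightarrow \mathbb{P}\bigl(\bigwedgem{m}\mathbb{C}^{n}\bigr),\qquad \langle v_{1},\ldots,v_{m}\rangle\mapsto [v_{1}\wedge\cdots\wedge v_{m}].$$
Pulling back the tautological bundle, one obtains the Pl\"ucker line bundle $\mathcal{L}=\iota^{*}\mathcal{O}(1)$; its $\GL_{n}(\mathbb{C})$-linearization has highest weight equal to the fundamental weight $\varpi_{m}$, which in partition notation is $(1^{m})$. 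Consequently $\mathcal{L}^{\otimes d}$ carries the highest weight $d\varpi_{m}=(d\cdot 1^{m})$.

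Second, I would invoke Borel--Weil. Writing $\Gr_{m}(\mathbb{C}^{n})=\GL_{n}(\mathbb{C})/P_{m}$, where $P_{m}$ is the maximal parabolic stabilizing a fixed $m$-dimensional subspace, Borel--Weil provides an isomorphism of $\GL_{n}(\mathbb{C})$-modules
$$H^{0}\bigl(\Gr_{m}(\mathbb{C}^{n}),\mathcal{L}^{\otimes d}\bigr)\;\cong\; V^{(d\cdot 1^{m})}$$
for every $d\geq 0$, while higher cohomology vanishes by Kempf vanishing (or Kodaira vanishing applied to the ample line bundle $\mathcal{L}^{\otimes d}$ for $d\geq 1$, and trivially for $d=0$, where one recovers the constants $\mathbb{C}$).

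Third, since the Pl\"ucker embedding is projectively normal, the homogeneous coordinate ring (equivalently, the coordinate ring of the affine cone) decomposes as
$$\bigoplus_{d\geq 0}H^{0}\bigl(\Gr_{m}(\mathbb{C}^{n}),\mathcal{L}^{\otimes d}\bigr)\;=\;\mathbb{C}\oplus V^{(1^{m})}\oplus V^{(2\cdot 1^{m})}\oplus V^{(3\cdot 1^{m})}\oplus\cdots,$$
which is exactly the asserted decomposition. The only subtle point is projective normality: one must know that every section of $\mathcal{L}^{\otimes d}$ is realized by a degree-$d$ polynomial in the Pl\"ucker coordinates. This is the main place where care is required, and it is the step I would expect to be the principal obstacle if one insists on an elementary argument. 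The classical route is Hodge's theorem on standard monomials, which provides an explicit basis of each graded piece indexed by semistandard Young tableaux of shape $(d^{m})$; a character computation on this basis recovers $V^{(d\cdot 1^{m})}$ and sidesteps the need to quote Borel--Weil at all.
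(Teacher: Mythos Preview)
Your argument is correct: the Borel--Weil identification of $H^{0}(\Gr_{m}(\mathbb{C}^{n}),\mathcal{L}^{\otimes d})$ with the irreducible module of highest weight $d\varpi_{m}$, combined with projective normality of the Pl\"ucker embedding, yields the decomposition exactly as you describe, and your fallback via Hodge's standard monomial basis is equally valid.

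As for comparison with the paper: there is nothing to compare against. The paper does not prove this proposition; it introduces it with the phrase ``we should recall some classical results about coordinate ring of the Grassmannian'' and states it without argument or citation, using it only as input to the subsequent corollary. Your write-up therefore supplies strictly more than the paper does. If you want to match the paper's spirit you could simply cite a standard reference (e.g.\ Fulton's \emph{Young Tableaux}, \S 9.3, or Hodge--Pedoe), but your sketch is a perfectly good self-contained justification.
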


That is, from the last theorem with a fact that Grassmannian as a scheme (over $\mathbb{Z}$) is an intersection of quadrics given by the generalized Pl\"ucker identities, we can deduce the following result.

\begin{corollary}\label{CorolOfthm9}
In notation of Proposition~\ref{Decomp}, the degree $2$ homogeneous component of the ideal generated by generalized Pl\"ucker identities is isomorphic as $\GL_{n}(R)$--module to  $\bigoplus\limits_{l = 2, 4 , \dots } V^{\lambda_{l}}.$ 

The same result with changing Pl\"ucker polynomials to the corresponding skew--symmetric polynomials is true.
\end{corollary}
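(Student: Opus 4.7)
The strategy is exactly the one hinted at in the paragraph preceding the statement: as a closed subscheme of $\mathbb{P}(\bigwedgem{m} V^n)$ defined over $\mathbb{Z}$, the Grassmannian $\Gr(m,n)$ is cut out scheme--theoretically by the quadratic generalized Pl\"ucker relations. Writing $S = \Sym^{\bullet}(\bigwedgem{m} V^n)^{*}$ for the ambient polynomial ring and $I \trianglelefteq S$ for the Pl\"ucker ideal, we therefore obtain an exact sequence of graded $\GL_n(R)$--modules
$$0 \longrightarrow I \longrightarrow S \longrightarrow S/I \longrightarrow 0,$$
with $S/I$ the homogeneous coordinate ring of $\Gr(m,n)$ described in Proposition~\ref{DecompGrassmman}. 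Restricting to the degree--two piece reduces the problem to computing a single term in the exact sequence of $\GL_n$--modules
$$0 \longrightarrow I_{2} \longrightarrow \Sym^{2}\bigl(\bigwedgem{m} V^n\bigr) \longrightarrow (S/I)_{2} \longrightarrow 0.$$

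By Proposition~\ref{Decomp} the middle term equals $\bigoplus_{l = 0, 2, 4, \dots} V^{\lambda_l}$, while Proposition~\ref{DecompGrassmman} identifies the quotient $(S/I)_{2}$ with the Schur module of rectangular shape $(2^{m})$, which in the notation of Proposition~\ref{Decomp} is exactly the single summand $V^{\lambda_0}$: its Young diagram has two columns of equal length $m$, corresponding to $l = 0$. Since the decomposition of $\Sym^{2}$ is multiplicity--free and the surjection $\Sym^{2} \twoheadrightarrow V^{\lambda_0}$ is $\GL_n$--equivariant, Schur's lemma forces its kernel to coincide with the complementary summand, that is $I_{2} = \bigoplus_{l = 2, 4, \dots} V^{\lambda_l}$, which is the desired equality. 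The skew--symmetric variant is obtained by running the same argument with $\Sym^{2}$ replaced by $\bigwedgem{2}$, using the odd--$l$ half of Proposition~\ref{Decomp} and matching the ``minimal'' summand $V^{\lambda_1}$ of $\bigwedgem{2}(\bigwedgem{m} V^n)$ with the degree--two piece of the skew Pl\"ucker quotient.

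The main obstacle will be transferring everything from $\mathbb{C}$, where Propositions~\ref{Decomp} and~\ref{DecompGrassmman} are stated, to an arbitrary commutative ring $R$. The plan is to handle this via integral Weyl modules: each $V^{\lambda_l}$ admits a natural $\mathbb{Z}$--form, the Pl\"ucker relations and $\Gr(m,n)$ are defined over $\mathbb{Z}$, and by standard monomial theory the quotient $(S/I)_{2}$ is a free $R$--module of constant rank. Hence the short exact sequence base--changes cleanly from $\mathbb{Z}$ to $R$ and the $\GL_n$--equivariant decomposition persists. Beyond this bookkeeping, no serious difficulty appears: the computation of $I_{2}$ is by design a single subtraction in the Grothendieck group of rational $\GL_n$--representations.
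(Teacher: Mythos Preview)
Your proposal is correct and follows precisely the approach the paper indicates: the paper gives no separate proof of this corollary, only the one--sentence remark preceding it that the result follows from Proposition~\ref{DecompGrassmman} together with the fact that the Grassmannian is an intersection of quadrics over~$\mathbb{Z}$. Your exact--sequence argument and the identification $(S/I)_2 \cong V^{\lambda_0}$ make this deduction explicit, and your discussion of the passage from $\mathbb{C}$ to an arbitrary ring via integral Weyl modules and standard monomial theory fills in details the paper leaves implicit.
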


Finally, we get the analog of Proposition~\ref{TransInGeneralPower} in the exceptional case. 

\begin{proposition}\label{TransInGeneralPowerBad}
We have the inclusion $\Tran_{\GL_{N}(R)}(\bigwedgem{m}\E(2m,R), \bigwedgem{m}\SL_{2m}(R) )\leq \bigwedgem{m}\GL_{2m}(R)$.
\end{proposition}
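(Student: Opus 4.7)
The plan is to mimic the argument of Proposition~\ref{TransInGeneralPower}, replacing the one-dimensional invariant $I_{n,m}$ (which fails to cut out $\bigwedgem{m}\GL_{2m}$ in the exceptional case, since by Proposition~\ref{StabAndGenerealPower} it only yields $\GO_{2m}$ or $\GSp_{2m}$) by the degree-$2$ homogeneous component of the Pl\"ucker ideal described in Corollary~\ref{CorolOfthm9}. Concretely, let $J\subseteq \Sym^{2}\!\left(\bigwedgem{m}V^{2m}\right)$ (respectively $\bigwedgem{2}\!\left(\bigwedgem{m}V^{2m}\right)$, for $m$ odd) be the $R$-submodule generated by the quadratic Pl\"ucker relations cutting out the Grassmannian $\Gr(m,2m)$. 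By Corollary~\ref{CorolOfthm9}, as a $\GL_{2m}(R)$-module $J$ decomposes as $\bigoplus_{l=2,4,\dots} V^{\lambda_{l}}$ (with an analogous statement in the skew case). Since $\Gr(m,2m)$ is a $\GL_{2m}$-homogeneous closed subscheme of projective space, $J$ is $\bigwedgem{m}\GL_{2m}(R)$-invariant; and by Proposition~\ref{StabPresExc} applied to the index set $I=\{2,4,\dots,2m-2\}$, its stabilizer equals exactly $\bigwedgem{m}\GL_{2m}(R)$.

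Now pick $g\in \Tran_{\GL_{N}(R)}\!\bigl(\bigwedgem{m}\E(2m,R),\bigwedgem{m}\SL_{2m}(R)\bigr)$, and consider the translated ideal $J^{g}=\{f(g\circ x):f\in J\}$. For any $h\in \bigwedgem{m}\E(2m,R)$ we have $ghg^{-1}\in \bigwedgem{m}\SL_{2m}(R)\subseteq \Stabi(J)$, so $J^{g}$ is an $\bigwedgem{m}\E(2m,R)$-invariant $R$-submodule of $\Sym^{2}\!\left(\bigwedgem{m}V^{2m}\right)$ of the same rank and graded degree as $J$. By the density argument of Proposition~\ref{TransInGeneralPower} (irreducibility for $\E(2m,R)$ coincides with irreducibility for $\GL_{2m}$, via localization at maximal ideals and~\cite[Theorem 3.6]{DemGab}), the decomposition of $\Sym^{2}\!\left(\bigwedgem{m}V^{2m}\right)$ into irreducibles as $\bigwedgem{m}\E(2m,R)$-modules still matches Proposition~\ref{Decomp}.

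Each $V^{\lambda_{l}}$ occurs with multiplicity one in $\Sym^{2}\!\left(\bigwedgem{m}V^{2m}\right)$, hence $J^{g}$ must coincide with the sum of the very same isotypic components as $J$, i.e.\ $J^{g}=J$. Therefore $g$ lies in $\Stabi(J)=\bigwedgem{m}\GL_{2m}(R)$, which gives the required inclusion.

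The main obstacle I anticipate is justifying the clean isotypic-matching in the last step. In the good case of Proposition~\ref{TransInGeneralPower}, the ideal $I_{n,m}$ was singled out by \emph{both} multiplicity one and minimal dimension; here we instead have a whole graded piece with several summands $V^{\lambda_{l}}$ of differing (and potentially quite large) dimensions, so one must argue that an $\bigwedgem{m}\E(2m,R)$-invariant submodule having the same Hilbert data as $J$ in fact agrees summand-by-summand. This is exactly where the multiplicity-one statement from Proposition~\ref{Decomp} together with the $\E\leftrightarrow \GL$ density identification of irreducible submodules is essential; any weakening of multiplicity one (e.g.\ over rings where the decomposition of Proposition~\ref{Decomp} fails to be defined over $R$) would derail the argument and require an extra descent from $\mathbb{C}$ to $R$, which is why the Pl\"ucker presentation of the Grassmannian as a scheme over $\mathbb{Z}$ is used to produce $J$ with explicit integral generators.
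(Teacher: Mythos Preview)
Your approach is essentially the paper's own: choose a quadratic $\GL_{2m}$-submodule $J$ whose stabilizer is $\bigwedgem{m}\GL_{2m}(R)$ via Proposition~\ref{StabPresExc}, show that $J^{g}$ is $\bigwedgem{m}\E(2m,R)$-invariant, and invoke the density argument from the proof of Proposition~\ref{TransInGeneralPower} to recognise $J^{g}$ as a direct sum of some of the $V^{\lambda_l}$.

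Two minor discrepancies. First, the paper deliberately works in the \emph{opposite-parity} square---the Pl\"ucker relations in $\Sym^{2}$ for \emph{odd} $m$ and their skew analog in $\bigwedgem{2}$ for \emph{even} $m$---precisely so that the one-dimensional form does not sit in the ambient space at all. Your convention is reversed; this is harmless for $m\geq 3$ (the resulting $J$ is neither the one-dimensional summand alone nor its complement, so Proposition~\ref{StabPresExc} still gives the correct stabilizer), but note that the genuine Pl\"ucker relations are symmetric regardless of the parity of $m$, so your ``$J\subseteq\bigwedgem{2}$ for $m$ odd'' already has to mean the skew analog from the last line of Corollary~\ref{CorolOfthm9}. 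Second, the index $l$ in Proposition~\ref{Decomp} runs only up to $m$, so your index set should be $\{2,4,\dots\}$ capped at $m$, not at $2m-2$.

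On the obstacle you flag: you are right that multiplicity one together with equal rank does not by itself force $J^{g}=J$; a different subset of the $V^{\lambda_l}$ could in principle have the same total dimension. The paper closes this differently: it asserts that $J^{g}$ is in fact a subrepresentation of $J$, whence equality follows by rank. The (implicit) reason is that the complement of $J$ in the ambient square is the single Cartan component $V^{2\varpi_m}$, whose dimension already exceeds that of $J$; hence any $\GL_{2m}$-invariant submodule of the correct rank cannot contain it and is forced inside $J$. Replacing your isotypic-matching sentence with this containment argument completes the proof along the same lines as the paper.
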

\begin{proof}
As always, $m \geq 3$. Then we can literally repeat the proof of Proposition~\ref{TransInGeneralPower}, with the exception of the following step. We use Corollary~\ref{CorolOfthm9} with  
$$
I =
\begin{cases}
\{1,3,5,\dots\} \text{ for even }m,\\
\{2,4,6,\dots\} \text{ for odd }m.
\end{cases}
$$
In other words, we take the full set of generalized Pl\"ucker identities in symmetric or exterior square whenever $m$ is odd or even in order not to take quadratic form stabilizing by orthogonal or symplectic group.

Then the ideal $I_{2m, m}^{g}$ might coincide with $I_{2m, m}$ or be subrepresentation (it is $\E(n, R)$ so is $\SL_{n}(R)$--invariant) of $I_{2m,m}$. But the last case corresponds to the case of Corollary~\ref{CorolOfthm9} with smaller set $I$. So an element $g$ in any case belongs to $\bigwedgem{m}\GL_{2m}(R)$.  
\end{proof}

\appendix
\section{Extended Chevalley--Demazure schemes: equivariant equations}\label{app}

In this Appendix, we state an algorithm for obtaining equations determining Chevalley--Demazure schemes. There are several attempts to get determining equations. For instance, Vladimir Popov in \cite{PopovEq} has found a complete list of equations for connected affine algebraic groups. 

However, we are mainly interested in equations agreed with algebraic group structure, \textit{equivariant equations}. Such sets of equations were presented for different Chevalley groups in series of the papers \cite{VP-EOeven,VP-EOodd,VP-Ep, VavLuzgE6,VavLuzgE7}; the \textbf{unifying principle} for these results we present below.

Let $\Phi$ be a reduced irreducible root system, let $G_{sc}(\Phi, \blank)$ be a corresponding simply connected Chevalle--Demazure scheme (over $\mathbb{Z}$), and $\rho$ be a representation of $G_{sc}(\Phi, \blank)$. 

We also consider the corresponding extended Chevalley--Demazure scheme $\overline{G}_{sc}(\Phi, \blank)$, which plays the same role with respect to $G_{sc}(\Phi, \blank)$ as the general linear group $\GL_n$ plays with respect to the special linear group $\SL_n$. Recall that extended groups in adjoint representation were constructed in the original paper~\cite{ChevalleySimpGroups} by Claude Chevalley. It is somewhat more tricky to construct extended groups in simply connected. The reason is unlike the adjoint case one have to increase the dimension of the maximal torus. A general construction was proposed by Stephen Berman and Robert V.~Moody in~\cite{BerMooExt}. 

The representation $\rho$ can be extended to the representation of the extended Chevalley--Demazure scheme. For simplicity, we denote this extended representation $\rho$ as well. 

By $\overline{G}_{\rho}(\Phi, \blank)$ we denote the scheme--theoretic image of the scheme $\overline{G}_{sc}(\Phi, \blank)$ in the representation $\rho$:
$$\rho: \overline{G}_{sc}(\Phi, \blank) \rightarrow \GL_{\dim \; \rho}(\blank).$$
Then we can state the general problem.

\begin{generalproblem}
For an extended Chevalle-Demazure scheme in representation $\rho$ to find the complete set of equations defining this scheme as an affine algebraic set.
\end{generalproblem} 

For an arbitrary ring $R$ let us consider symmetric algebra $\Sym_{R}(V_{\rho}) \cong R[x_{1}, \dots, x_{\dim \rho}]$. There is a natural action of $\GL(V_{\rho})$ on this algebra, so there is an action of the group $\overline{G}_{\rho}(\Phi, R)$. We decompose this polynomial space {into} irreducible components under the last action of the extended Chevalley--Demazure scheme
$$\Sym_{R}(V_{\rho}) \cong \bigoplus_{\lambda} V_{\rho}^{\lambda}.$$

The last means precisely that for each $\lambda$ there is a map 
$$\phi_{\rho, \lambda}:\;\overline{G}_{\rho}(\Phi, \blank) \rightarrow \GL(V_{\rho}^{\lambda}).$$

Let $k$ be a natural number such that a map 
$$\psi_{\rho, k}:\;\GL(V_{\rho}) \hookrightarrow \GL\left(\Sym^{k}(V_{\rho})\right)$$
is an embedding. By the classification of maximal overgroups of classical groups, except for the cases from~\cite[Table 1]{SeitzMaxSub}, we see that the image of $\overline{G}_{\rho}(\Phi, \blank)$ under the embedding $\psi_{\rho, k}$ is a stabilizer of a polynomial ideal $\langle V_{\rho}^{\lambda} \rangle$
$$\psi_{\rho, k}\left( \GL(V_{\rho}) \right) \cap \GL(V_{\rho}^{\lambda}) = \psi_{\rho, k} \big( \overline{G}_{\rho}(\Phi, \blank)\big).$$

The last equality gives us the desired list of equations for the scheme since both factors on the left-hand side can be {described} by polynomial equations. 

We note that the equations for $\GL(V_{\rho}^{\lambda}) \leq \GL(\Sym^{k}V_{\rho})$ could be very simple by the appropriate choice of a suitable subrepresentation. For instance, this is true for a one-dimensional space $V_{\rho}^{\lambda}$, which is nothing than a $\overline{G}_{\rho}(\Phi, \blank)$-invariant form\footnote{As always, we consider invariance of a form up to similarities.}.

Let us remark that this argument is true for the functor $\bigwedgem{k}$ instead of $\Sym^{k}$. We present several examples of this general idea below:
\begin{enumerate}
\item The Chevalley group $\SO_{n}$ in the natural representation has an invariant form (see~\cite{VP-EOeven,VP-EOodd})
$$x_{1}^2+\dots +x_{n}^2.$$
Let $P$ be a linear space generated by this form. The extended Chevalley group, $\GO_{n}$, is by definition a stabilizer of $P$. In other words,
$$\GO_{n}(R) = \{\;g \in \GL_{n}( R):\; g \circ (x_{1}^2+\dots +x_{n}^2) = \lambda(g)\cdot (x_{1}^2+\dots +x_{n}^2)\;\text{ for }\lambda(g) \in R^{*}\;\}.$$ 

\item The Chevalley group $\Sp_{2n}$ in the natural representation has an invariant form (see~\cite{VP-Ep}):
$$x_{n}y_{-n}+x_{n-1}y_{-n+1}+\dots+x_{1}y_{-1}-x_{-1}y_{1}-\dots-x_{-n}y_{n}.$$
And, again, the extended group is a stabilizer (up to similarities) of this form.
\item In~\cite{VavLuzgE6} Nikolai Vavilov and Alexander Luzgarev consider the Chevalley group of type $E_{6}$ in representation with the highest weight $\varpi_{1}$. They construct the invariant cubic form $Q$:
$$Q(x) = \sum_{\lambda, \mu, \nu} \sign(\lambda, \mu, \nu) x_{\lambda}x_{\mu}x_{\nu},$$
where the sum is taken over all unordered \textbf{triads} $\{\lambda, \mu,\nu\}\in \Theta_{0}$. The extended Chevalley group of type $E_{6}$ is a stabilizer of the linear space generated by $Q$.

\item Similarly, in~\cite{VavLuzgE7} Nikolai Vavilov and Alexander Luzgarev have considered the Chevalley group of type $E_{7}$ in representation with the highest weight $\varpi_{1}$. They construct a four-linear form $f$ and a bilinear form $h$ such that the extended Chevalley group is an intersection of the stabilizers of these forms.
\end{enumerate}

\if 0
\begin{thebibliography}{100}
\fi

\bibliographystyle{unsrt}
\bibliography{english}

\end{document}